\newtheorem{theorem}{Theorem}[section]
\newtheorem{lemma}[theorem]{Lemma}
\theoremstyle{definition}
\numberwithin{subcase}{case}
\theoremstyle{definition}
\newtheorem{remark}[]{\textbf{Remark}}
\numberwithin{equation}{section}
\begin{document}
\title{Bilinear sums with $GL(2)$ coefficients and the exponent of distribution of $d_3$}
\author{Prahlad Sharma \vspace{-1cm}}
\address{Max Planck Institute for Mathematics, Vivatsgasse 7, 53111 Bonn}
\email{sharma@mpim-bonn.mpg.de}
\subjclass{11N25, 11N37, 11F41}
\maketitle	
\begin{abstract}
We obtain the exponent of distribution $1/2+1/30$ for the ternary divisor function $d_3$ to square-free and prime power moduli, improving the previous results of Fouvry--Kowalski--Michel, Heath-Brown, and Friedlander--Iwaniec. The key input is certain estimates on bilinear sums with $GL(2)$ coefficients obtained using the delta symbol approach.
\end{abstract}

\section{Introduction}
Given an arithmetically interesting function $f:\mathbb{N}\to \mathbb{C}$ and $q$ of reasonable size, we expect that
\begin{equation}\label{first}
\sum_{\substack{n\leq X\\n=a(q)}}f(n)\sim \frac{1}{\phi(q)}\sum_{\substack{n\leq X\\(n,q)=1}}f(n),
\end{equation}for each $(a,q)=1$. It is a fundamental problem in number theory to show the above asymptotic holds for $q$ as large as possible. To this end, we call a positive number $\delta$ an \textit{exponent of distribution for $f$ restricted to a set $\mathcal{Q}$ of moduli}, if for any $q\in\mathcal{Q}$ with $q\leq X^{\delta-\epsilon}$ and any residue class $a\pmod{q}$ with $(a,q)=1$, the asymptotic formula
\begin{equation}\notag
 \sum_{\substack{n\leq X\\n=a (q)}}f(n)=\frac{1}{\phi(q)}\sum_{\substack{n\leq X\\ (n,q)=1}}f(n)+O\left(\frac{X}{q(\log X)^A}\right)
 \end{equation} holds for any $A>0$ and $X\geq 2$.

For the very important Von Mangoldt function $\Lambda(n)$, the classical Siegel-Walfisz theorem implies that the above asymptotics hold for $q\leq (\log X)^{B(A)}$, where $B(A)>0$ depends on $A$, whereas the GRH predicts $q\leq X^{1/2-\epsilon}$. The celebrated Bombieri-Vinogradov theorem confirms this prediction on an average over the moduli.

Another important class of examples comes from the \textit{k-fold} divisor function
\begin{equation}\notag
d_k(n)=\sum_{n_1n_2\cdots n_k=n}1.
\end{equation}It is widely believed that $\delta=1$ is a exponent of distribution for all $k\geq 2$. This has deep consequences for our understanding of primes which goes far beyond the direct reach of the GRH. For $k=2$, the best known exponent of distribution is $\delta=2/3$ due to  Selberg (unpublished), Hooley \cite{hooley} and Heath-Brown \cite{hb}. Several authors have achieved improvement to $\delta=2/3$ in special cases. See \cite{blodiv, bhs, fouvryiwa, fouv}.

The only other case known for surpassing the `Bombieri-Vinogradov range' $\delta=1/2$ is when $k=3$. Let us briefly take a look at the previous approaches. After an application of the $GL(3)$ Voronoi summation formula to the left hand side of \eqref{first} (or equivalently, a three-fold application of the Poisson summation formula), one observes that to beat the $\delta_3=1/2$ barrier, one needs non-trivial estimates for
\begin{equation}\label{convo}
\sum_{m\sim q}d_3(m)\text{Kl}_3(am, q),
\end{equation}where $\text{Kl}_3(\cdots)$ is the hyper-Kloosterman sum. Opening the divisor function\\ $d_3(m)=\sum_{m_1m_2m_3=m}1$ and dividing the $m_i$-sum into dyadic blocks $m_i\sim Y_i$ with $Y_1\leq Y_2\leq Y_3$, it suffices to obtain non-trivial estimates for
\begin{equation}\label{decompi}
\sum_{m_1\sim Y}\sum_{m_2\sim q/Y}d(m_2)\text{Kl}_3(am_1m_2, q)
\end{equation}for each $Y\leq q^{1/3}$. When $Y$ is not too small, good estimates can be obtained by applying Cauchy-Schwarz inequality to \eqref{decompi} keeping the $m_2$ variable outside the absolute value square followed by a Poisson summation in the $m_2$-sum. Therefore, the main effort lies in obtaining good estimates for \eqref{decompi} when $Y$ is small. Alternatively, by applying the $GL(2)$ Voronoi summation formula to the $m_2$-sum in \eqref{decompi}, one can also consider
\begin{equation}\label{decompii}
\sum_{m_1\sim Y}\sum_{m_2\sim qY}d(m_2)e(am_1\overline{m_2}/q).
\end{equation}In their groundbreaking work, Friedlander and Iwaniec \cite{friediwa} successfully obtained non-trivial estimates for \eqref{decompii} which led to the exponent $\delta_3=1/2+1/230$. More precisely, their main input was non-trivial estimates for the short exponential sums
\begin{equation}\label{short}
\sum_{h\sim H}\sum_{m\sim M}\sum_{n\sim N}e(h\overline{mn}/q),
\end{equation}which is a further decomposition of \eqref{decompii}, using the ``shifting by $ab$'' technique. Heath-Brown \cite{hbd3} improved the exponent to $\delta_3=1/2+1/82$ by utilizing a more elementary treatment of \eqref{short} based on the methods of Heath-Brown \cite{hba} and Balasubramanian, Conrey, and Heath-Brown \cite{bala}. Since both of these approaches were based on decomposing the sum \eqref{decompi} into multiple exponential sums \eqref{short}, they were far from optimal.

With a more structural approach by viewing the divisor function $d(m_2)$ in \eqref{decompi} as the Fourier coefficients of Eisenstein series, Fouvry, Kowalski, and Michel \cite{fkm} were able to produce the exponent $\delta_3=1/2+1/46$ for prime moduli improving the previous results. Their key input was the estimates for short sums of $GL(2)$ coefficients 
\begin{equation}\label{shortgl2}
\sum_{m\sim M}\lambda(m)K(m)
\end{equation}twisted by general trace functions $K(\cdots)$ of prime modulus, which they obtained in \cite{fkm2} using the $GL(2)$ spectral theory. Note that the relevant estimate for \eqref{shortgl2} (when $\lambda(m)=d(m)$) was obtained in the separate paper \cite{fkm3}, which required additional arguments to isolate its contribution from the continuous spectrum. They further improved their exponent to $\delta_3=1/2+1/34$ on an average over the moduli by combining their results with the estimates for sums of Kloosterman sums pioneered by Deshouillers and Iwaniec. P. Xi \cite{ping} obtained the exponent $\delta_3=1/2+1/34$ for moduli with special factorisation using the $q$-analogue of the van der Corput method. 

In this paper, we go even further and utilise the complete bilinear structure in \eqref{decompi}, which results in an improvement over all the above exponents. We use the delta symbol approach to obtain non-trivial estimates for bilinear sums \eqref{decompi} involving $GL(2)$ coefficients. The method provides a uniform treatment for the holomorphic/Maass and Eisenstein cases and essentially covers all moduli.

\begin{theorem}\label{mainthm}
Let $\epsilon>0$ and $a$ be a non-zero integer. For every square-free $q\geq 1$ and every odd prime power $q=p^{\gamma}, \gamma\geq  28$ with $(a,q)=1$ and satisfying
 \begin{equation}\notag
 q\leq X^{1/2+1/30-\epsilon},
 \end{equation}we have
 \begin{equation}\notag
 \sum_{\substack{n\leq X\\n=a(q)}}d_3(n)=\frac{1}{\phi(q)}\sum_{\substack{n\leq X\\(n,q)=1}}d_3(n)+O(X^{1-\epsilon}/q),
 \end{equation}where the implied constant depends only on $\epsilon$.
\end{theorem}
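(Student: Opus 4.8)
The plan is to follow the established route for $d_3$ in arithmetic progressions and to replace the treatment of the key bilinear sum \eqref{decompi} by the delta-symbol estimate that is the technical core of the paper.

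\emph{Step 1: reduction to a hyper-Kloosterman bilinear sum.} After a smooth dyadic partition of the $n$-sum into pieces $n\sim N$ (those with $N<q$ being trivially admissible), I would detect the congruence by orthogonality of Dirichlet characters modulo $q$. The principal character produces exactly the main term $\phi(q)^{-1}\sum_{n\le X,(n,q)=1}d_3(n)$ up to an acceptable error, and for each non-principal $\chi$ the Gauss-sum expansion followed by the $GL(3)$ Voronoi summation formula (equivalently, three applications of Poisson summation) transforms the sum into a dual one supported on $m\sim M^\ast:=q^3/X$. The standard manipulations then reduce everything to proving
\[\sum_{m\sim M^\ast}d_3(m)\,\text{Kl}_3(\overline a m;q)\ \ll\ M^\ast q^{-\delta}\]
for a fixed $\delta>0$; tracking the archimedean factors of the Voronoi transform, an admissible $\delta$ corresponds to the range $q\le X^{1/2+\eta}$ with $\eta=\delta/(2(1-\delta))$, so that $\delta=1/16$ — a saving of $q^{1/16}$ over the trivial bound — yields $\eta=1/30$.

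\emph{Step 2: opening $d_3$ and the Cauchy--Schwarz range.} Writing $d_3(m)=\sum_{m_1m_2m_3=m}1$, dividing into dyadic blocks $m_i\sim Y_i$ with $Y_1\le Y_2\le Y_3$ and amalgamating $n:=m_2m_3$, one is reduced — exactly to \eqref{decompi} — to a power saving for $\mathcal U(Y):=\sum_{m\sim Y}\sum_{n\sim M^\ast/Y}d(n)\,\text{Kl}_3(\overline a mn;q)$ with $Y=Y_1\le M^{\ast 1/3}$. For $Y$ not too small I would Cauchy--Schwarz keeping the divisor-weighted $n$-variable outside the square, expand, and Poisson the $n$-sum modulo $q$; the off-diagonal is a correlation sum of two hyper-Kloosterman sheaves, which is $O(q^{1/2+\epsilon})$ by Deligne once one checks the sheaves are geometrically non-isomorphic (the bounded set of degenerate shifts contributing only finitely many extra diagonal terms). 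This gives $\mathcal U(Y)\ll M^\ast q^{\epsilon}\big(Y^{-1/2}+q^{-1/4}+(q^{1/2}Y/M^\ast)^{1/2}\big)$, which is $\ll M^\ast q^{-1/16+\epsilon}$ as soon as $Y\gg q^{1/8}$ (and the last two terms are $\ll q^{-1/16}$ throughout $Y\le M^{\ast 1/3}$).

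\emph{Step 3: the small-$Y$ range via the delta symbol.} The genuinely hard case is $1\le Y\ll q^{1/8}$, where the $m$-sum is too short for Cauchy--Schwarz to help; here I would invoke the main bilinear estimate of the paper. The idea is to apply the delta-symbol method to separate $m$ from $n$ inside $\text{Kl}_3(\overline a mn;q)$ — schematically, trading the multiplicative coupling $mn$ for a coupling through an auxiliary modulus of controlled size — and then to apply Voronoi/Poisson in $m$, in $n$ (this is where the $GL(2)$ Voronoi formula for $d(n)$ enters, replacing the $d(n)$-sum by a shorter dual sum at the cost of an extra Kloosterman factor), and in the auxiliary variable. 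After these transformations $\mathcal U(Y)$ becomes a short bilinear form whose matrix is a product of algebraic trace functions modulo $q$, which one controls by a further Cauchy--Schwarz and the Deligne/Katz bounds, checking the non-degeneracy of the resulting complete exponential sums. The output meets $M^\ast q^{-1/16}$ precisely at the crossover $Y\asymp q^{1/8}$ and is better for smaller $Y$. This is the step I expect to be the main obstacle: extracting square-root-type cancellation from the doubly-dualised short bilinear form, and in particular ruling out the degenerate exponential-sum configurations, is the bottleneck both for the difficulty of the proof and for the numerical value of $\eta$.

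\emph{Step 4: uniformity in $q$ and assembly.} For square-free $q$ the argument runs with $q^{\epsilon}$-uniformity after reducing the complete character sums to prime moduli by the Chinese Remainder Theorem. For $q=p^{\gamma}$ the complete sums modulo $p^{\gamma}$ are not accessible to Deligne's bounds and must instead be evaluated exactly by $p$-adic stationary phase, using that $\text{Kl}_3(\cdot;p^{\gamma})$ localises near its critical points for large $\gamma$; each completion step consumes a bounded number of powers of $p$, and the hypothesis $\gamma\ge 28$ is exactly what leaves enough $p$-adic room for each step to retain a genuine saving. Combining Step 2 for $Y\gg q^{1/8}$ with Step 3 for $Y\ll q^{1/8}$, summing over the $O(\log^{2}q)$ dyadic triples $(Y_1,Y_2,Y_3)$ and over the non-principal characters, and tracing back through Step 1, produces the asymptotic of Theorem~\ref{mainthm} with error $O(X^{1-\epsilon}/q)$ in the range $q\le X^{1/2+1/30-\epsilon}$.
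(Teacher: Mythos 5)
Your sketch captures the architecture correctly --- $d_3$-Voronoi to reduce to bilinear hyper-Kloosterman sums, a Cauchy--Schwarz/Poisson regime when the short variable is not too small, and the delta-symbol estimates (Theorems~\ref{prime}, \ref{primepower}, Lemma~\ref{alt}) as the technical core --- but it diverges from the paper at the reduction stage and misdescribes the delta-symbol step. The paper does not use Dirichlet characters: it detects $n\equiv a\ (q)$ by additive characters, decomposes into Ramanujan sums $\sum_{d\mid q}S(d)$, applies Lemma~\ref{vord3} to each $S(d)$, and then recovers the main term not by computing the Voronoi polynomial $P$ but by observing that the resulting $M(d)$ are independent of $a$, so that averaging the identity \eqref{pf} over $(a,q)=1$ forces $\sum_d M(d)=\phi(q)^{-1}\sum_{(n,q)=1}d_3(n)+O(X^{1-\epsilon}/q)$. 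Your Gauss-sum route is viable in principle, but it imports imprimitivity bookkeeping and a separate evaluation of the principal-character term, both of which the paper's Ramanujan-sum decomposition sidesteps.

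The more serious issue is Step 3. The delta symbol is \emph{not} used to separate $m$ from $n$ inside $\text{Kl}_3(mn;q)$; it separates the $GL(2)$ coefficient $\lambda(m)$ from the trace function $K(mn)$ by writing $\lambda(m_1)K(m_2n)$ under a congruence $m_1\equiv m_2\ell\ (qc)$, then applying $GL(2)$ Voronoi in $m_1$, Poisson in $m_2$, Cauchy--Schwarz in the dual $\tilde m_1$, and one more Poisson before Lemmas~\ref{primecharest} and \ref{ppowerchar} enter. Two ingredients you omit are structural, not cosmetic: (i) for square-free $q$, the amplifier $L^{-1}\sum_{\ell\sim L}\overline{\lambda(\ell)}\lambda(m\ell)$, without which the delta-symbol diagonal already loses to the trivial bound; and (ii) for $q=p^\gamma$, conductor-lowering --- using only a factor $p^r$ of $q$ in the auxiliary congruence, with $r$ optimised subject to $r\le 4\gamma/5$. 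That inequality is a hypothesis of Lemma~\ref{ppowerchar}, and it --- not a soft ``$p$-adic room'' heuristic --- is what produces $\gamma\ge 28$, through the resulting error term $X^{3/10}q^{3/10}p^{3/20}$. Finally, a small numerical point: at the boundary $q=X^{1/2+1/30}$ the saving the paper actually obtains in the sum of length $Y\asymp q^3/X$ is $q^{1/8}$, governed by the $Y^{2/3}\mathfrak{q}^{1/4}$ term of \eqref{Sestf}; your claimed $q^{1/16}$ saving and the formula $\eta=\delta/(2(1-\delta))$ happen to output $1/30$ but do not match the bookkeeping that the prefactors in \eqref{edf} actually impose.
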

\textbf{Remarks.}
\begin{itemize}
\item We have considered the complementary cases of square-free and prime power, but it is possible to merge the arguments to cover all natural numbers and, in particular, to close the gap from $\gamma\geq 28$ in Theorem \ref{mainthm} to $\gamma \geq 2$. All one needs is a version of Lemma \ref{ppowerchar} with restriction $u\leq 4\gamma/5$ lifted to $u\leq \gamma-1$ for $\gamma\geq 2$. The current estimation of the character sum leads to complicated counting arguments, which we decided to avoid for the exposition's simplicity.
\item For $q=p^{\gamma}$, the methods of this paper can produce a better exponent $\delta>1/2+1/30$ by using the $p$-adic stationary phase analysis followed by an exponent pair estimate (see \cite[Theorem 2]{milicevic}) to bound certain average of the product of two Kloosterman sums non-trivially. See the remarks just before Lemma \ref{altcharest} and Remark \ref{improv1}.
\item The exponents can be further improved by combining our estimates with the Kloostermania techniques when averaging over the moduli. 
\end{itemize}

The key input is the following estimate for the bilinear sums with $GL(2)$ coefficients obtained using the separation of oscillation technique. \\
\noindent
For $m,q\geq 1$, let $\tilde{\text{Kl}}_3(m, q)$ denote the normalised hyper-Kloosterman sum
\begin{equation}\label{hyper}
\tilde{\text{Kl}}_3(m,q)=\frac{1}{q}\,\,\sideset{}{^*}\sum_{\substack{x,y\,\, (q)}}e\left(\frac{mx+y+\overline{xy}}{q}\right).
\end{equation}Let $\lambda(n)$ denote the $n$-th Fourier coefficient of a $SL(2,\mathbb{Z})$ holomorphic cusp form or Maass cusp form or the Eisenstein series $E(z,1/2+w)$ for a complex number 
\begin{equation}\notag
w\ll q^{\epsilon}.
\end{equation}Note that in the case of Eisenstein series, $\lambda (n)=\sigma_{-2w}(n)$, which will be the relevant case for the application to Theorem \ref{mainthm}. We fix a smooth function $V(x)$ compactly supported in $\mathbb{R}_{>0}$ and satisfying $V^{(j)}(x)\ll_{j,\epsilon} q^{j\epsilon}, j\geq 0$.

\begin{theorem}\label{prime}
Let $q\geq 1$ be square-free, $b\in\mathbb{Z}$ co-prime to $q$ and $\lambda(n)$, $\tilde{\text{Kl}_3}(\cdots)$ be as above. Let  $\mathscr{N}\subset \mathbb{Z}$ be a set of $N$ consecutive integers and let $\{\alpha_n\}_{n\in \mathscr{N}}$ be a sequence of complex numbers with $|\alpha_n|\ll 1$. Suppose $M\geq 1$ is such that $N\leq q^{1/2}(1+M/q)^{-2}$, then
\begin{equation}\notag
\begin{aligned}
\sum_{n\in \mathscr{N}}\sum_{m\geq 1}\alpha_n\lambda(m)\tilde{\text{Kl}}_3(mnb, q)V(m/M)\ll_{\epsilon} q^{3/8+\epsilon}M^{1/2}N^{3/4}(1+M/q)^{1/2}&+q^{-1/4+\epsilon}MN^{3/2}(1+M/q)\\
&+Nq^{3/4+\epsilon}(1+M/q)^{1/2}.
\end{aligned}
\end{equation}
\end{theorem}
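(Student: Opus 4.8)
\textbf{Proof plan for Theorem \ref{prime}.}

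The plan is to apply the delta symbol method (the Duke--Friedlander--Iwaniec / Heath-Brown circle method) to detect the congruence hidden in the hyper-Kloosterman sum, thereby separating the oscillation in $m$ from that in $n$. First I would open the normalised $\tilde{\text{Kl}}_3(mnb,q)$ as a triple exponential sum over residues modulo $q$ and execute a Voronoi-type / reciprocity manoeuvre so that the inner summation becomes a sum over an additive character $e(\pm mn\overline{\cdots}/q)$. At this point the congruence condition $mn \equiv \text{(something)} \pmod q$ linking the two ranges can be extracted via the delta symbol $\delta(\ell) = \frac{1}{Q}\sum_{1\le c\le Q}\frac{1}{c}\sum_{d\,(c)}^* e(d\ell/c)\,g(c,\ell)$ with $Q\asymp (qMN)^{1/2}/q^{\text{something}}$, the standard choice balancing the dual lengths. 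Inserting this and applying Poisson summation to the $m$-variable (exploiting that $\lambda(m)$ are $GL(2)$ coefficients, so the $GL(2)$ Voronoi summation formula turns $\sum_m \lambda(m) e(\cdots) V(m/M)$ into a dual sum of length $\asymp c^2 q / M$ against a Bessel-type kernel) produces a new bilinear expression with much shorter effective ranges.

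The second stage is the arithmetic evaluation: after both applications of summation formulae, the $(n, d, c)$-sum carries a complete exponential sum modulo $cq$ (or modulo $c$ and modulo $q$ separately after factoring, using $(c,q)$-coprimality generically and a separate small-$\gcd$ treatment). I would estimate this character sum by appealing to the relevant Lemma on character sums stated earlier in the paper (the square-free analogue of \texttt{Lemma \ref{ppowerchar}}), which should give square-root cancellation of the shape $\ll (cq)^{1/2+\epsilon}$ with suitable coprimality conditions, together with a Ramanujan-type bound on the resulting Kloosterman/Salié fragments. The hypothesis $N \le q^{1/2}(1+M/q)^{-2}$ is exactly what guarantees that the dual $n$-sum is still ``long enough'' relative to the modulus for the Poisson/Voronoi step in $n$ (or the trivial bound on the short complete sum) to be efficient and that the diagonal term $mn \equiv 0$ contributes within the claimed bound; I would track this constraint carefully through the optimisation of $Q$.

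The final stage is bookkeeping: collect the off-diagonal main term, which after the two summation formulae is an oscillatory integral against $V$ and a Bessel kernel, bound it by stationary phase / repeated integration by parts (using $V^{(j)} \ll q^{j\epsilon}$), and sum the resulting arithmetic factors over $c \le Q$, $d\,(c)^*$, and the two dual variables. The three terms in the stated bound should emerge as: (i) the generic off-diagonal contribution $q^{3/8+\epsilon}M^{1/2}N^{3/4}(1+M/q)^{1/2}$ after Cauchy--Schwarz in the dual $n$-variable and square-root cancellation in the character sum; (ii) the diagonal / degenerate-frequency term $q^{-1/4+\epsilon}MN^{3/2}(1+M/q)$ coming from the zero dual frequency in $m$ combined with the trivial bound in $n$; and (iii) the term $Nq^{3/4+\epsilon}(1+M/q)^{1/2}$ from the ``small $c$'' or boundary contribution where the delta-method savings are weakest. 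The main obstacle I anticipate is controlling the character sum modulo $cq$ uniformly: one must handle the interaction between the modulus $c$ coming from the circle method and the original modulus $q$, in particular the non-coprime locus $(c,q)>1$, and ensure the $GL(2)$ Voronoi dual kernel does not destroy the cancellation --- this is where the holomorphic, Maass, and Eisenstein cases must be treated uniformly and where the Weil-type bound for the relevant complete sum (and its degeneration when frequencies vanish) does the real work.
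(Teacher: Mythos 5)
Your high-level instinct — delta symbol to separate oscillations, then Voronoi/Poisson, then Cauchy--Schwarz and a Weil-type character-sum estimate — is the right genre, but you are missing the idea that actually makes the proof work, and several other structural choices differ from the paper.

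The central gap is the amplification step. The paper explicitly observes that ``a direct application of the delta symbol fails to beat the trivial bound at a certain diagonal contribution.'' To fix this it amplifies over primes $\ell \in [L,2L]$ coprime to $q$, writing $\lambda(m)\overline{\lambda(\ell)} = \overline{\lambda(\ell)}\lambda(m)$ and using the Hecke relation and the $GL(2)$ prime number theorem so that $|S|\ll |\tilde S| + O(MN/L)$ with $\tilde S$ involving $\lambda(m\ell)$. The delta symbol is then applied to detect $qc \mid m_1 - m_2\ell$, not $qc \mid m_1-m_2$. The parameter $L$ is optimised at the very end; in fact your second term $q^{-1/4+\epsilon}MN^{3/2}(1+M/q)$ is \emph{exactly} the amplification error $MN/L$ at the optimal $L = q^{1/4}N^{-1/2}(1+M/q)^{-1}$, not a ``zero dual frequency in $m$'' contribution as you claim. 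Without the $\ell$-average, your diagonal would be too large and you would not reach the claimed exponents, so this is not a cosmetic omission.

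Two further discrepancies. First, the paper deliberately uses a \emph{trivial} delta expansion: it detects the congruence with additive characters to a modulus $qc$ where $c$ ranges over primes of arbitrarily large size $C$, and the size of $C$ plays no role in the final bound (a smooth $e(q^\epsilon(m_1-m_2\ell)/ML)$ factor is inserted to keep the dual lengths generic). There is no optimisation over $Q\asymp(qMN)^{1/2}/q^{\cdots}$ as you propose; the contributions from $c$ and from the error term in the delta symbol are killed by letting $C\to\infty$, not by balancing. Second, the character-sum input for square-free $q$ is Lemma \ref{primecharest} (the Dabrowski--Fisher / $\ell$-adic bound \eqref{dfest}, applied prime by prime and glued), not ``the square-free analogue of Lemma \ref{ppowerchar}.'' Relatedly, the Cauchy--Schwarz is applied keeping the Voronoi-dual variable $\tilde m_1$ outside (followed by one more Poisson in $\tilde m_1$), not ``in the dual $n$-variable''; the $n$-sum is left alone throughout. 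Your mechanism for the third term ($Nq^{3/4}(1+M/q)^{1/2}$) as a ``small $c$'' boundary contribution also does not match: in the paper that term is a genuine zero-frequency ($\tilde m_4=0$) diagonal term surviving after amplification, and the remark following the theorem statement flags it as the obstacle to going beyond the Burgess range.
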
The flexibility of the method allows us to obtain stronger estimates for moduli with special factorisation.
\begin{theorem}\label{primepower}
Let $q=p^{\gamma}$, $\gamma\geq 2$ and $p>2$. With the notations of Theorem \ref{prime}, suppose that $N\leq q^{1/5}(1+M/q)^{-2}$. Then 
\begin{equation}\notag
\sum_{n\in \mathscr{N}}\sum_{m\geq 1}\alpha_n\lambda(m)\tilde{\text{Kl}}_3(mnb, q)V(m/M)\ll_{\epsilon} p^{7/12}q^{1/3+\epsilon}M^{1/2}N^{5/6}(1+M/q)^{2/3}+q^{13/20+\epsilon}N.
\end{equation}
\end{theorem}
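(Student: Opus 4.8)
The strategy is to run the separation-of-oscillation (delta symbol) argument behind Theorem~\ref{prime} essentially verbatim, the one structural change being that the complete exponential sums to modulus $q$ which arise are now evaluated by $p$-adic stationary phase rather than estimated through the Weil/Deligne bounds for trace functions; this sharper input is what yields the improved exponent. Concretely, one opens $\tilde{\text{Kl}}_3(mnb,q)$ by \eqref{hyper}, so that the $m$-aspect sits inside a single additive character $e(mnbx/q)$ with $(nbx,q)=1$, and applies the $GL(2)$ Voronoi summation formula to $\sum_{m}\lambda(m)e(mnbx/q)V(m/M)$. This replaces $m$ by a dual variable of length $\tilde M:=q^{2}/M$ (up to $q^{\epsilon}$) when $M\le q$, and by a short dual sum attached to $\asymp M/q$ residue classes when $M\ge q$; treating both cases uniformly produces the $(1+M/q)$-factors. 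The three cases --- $\lambda$ holomorphic, Maass, Eisenstein --- run simultaneously, the only Eisenstein-specific feature being the polar term attached to $w$, whose separate estimation (again by stationary phase) accounts for the $q^{13/20+\epsilon}N$ summand.

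After Voronoi and reassembly, $\mathcal{S}$ is, up to that polar term, a weighted bilinear sum over $n\in\mathscr{N}$ and $\tilde m\asymp\tilde M$ of a complete exponential sum modulo $q$ built from $\tilde{\text{Kl}}_3$ and the Voronoi phase. For square-free $q$ one would bound this by the conductor of the underlying trace function; for $q=p^{\gamma}$ one instead evaluates it by $p$-adic stationary phase (a version of Lemma~\ref{ppowerchar}), which isolates an explicit oscillatory main term together with a Kloosterman-type remainder concentrated at the critical points of the phase. Having this explicit shape, rather than merely a square-root bound, is precisely what lets the subsequent step produce a genuine saving and hence what buys the stronger exponent over Theorem~\ref{prime}.

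One then removes $\alpha_n$ by Cauchy--Schwarz in the $\tilde m$-variable, keeping $\lambda(\tilde m)$ inside and using $\sum_{\tilde m\asymp\tilde M}|\lambda(\tilde m)|^{2}\ll\tilde M q^{\epsilon}$; squaring out the remaining $n$-sum and applying Poisson summation in $\tilde m$ turns the zero frequency into the diagonal and the nonzero frequencies into an average over the Poisson variable of products of two Kloosterman sums to modulus $p^{\gamma}$. One bounds these products once more by their stationary-phase evaluation --- which is enough here, although the sharper $p$-adic exponent-pair estimate flagged in the remarks before Lemma~\ref{altcharest} would do better --- and then, collecting the diagonal, the off-diagonal and the polar contribution and optimising over the Cauchy--Schwarz weighting and the ranges of $M$ and $\tilde M$, one arrives at the two stated terms. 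The hypothesis $N\le q^{1/5}(1+M/q)^{-2}$, in place of the $N\le q^{1/2}(1+M/q)^{-2}$ of Theorem~\ref{prime}, enters through the range of validity of the character-sum evaluation --- the restriction of the type $u\le 4\gamma/5$ carried by Lemma~\ref{ppowerchar}, cf. the first remark after Theorem~\ref{mainthm}.

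The main obstacle is this off-diagonal estimate: one must control an average over the Poisson frequency of products of two Kloosterman sums to modulus $p^{\gamma}$, which reduces to counting solutions of the congruences cut out by the critical points of the relevant phase, and one must do so with enough room for the off-diagonal to be dominated by the diagonal. This balance, together with the restricted range of Lemma~\ref{ppowerchar}, is what confines the argument to $N\le q^{1/5}$; the payoff for that narrower range is the term $p^{7/12}q^{1/3+\epsilon}M^{1/2}N^{5/6}(1+M/q)^{2/3}$, which for $\gamma$ sufficiently large improves on the square-free estimate of Theorem~\ref{prime} and is what propels the exponent $1/2+1/30$ of Theorem~\ref{mainthm}.
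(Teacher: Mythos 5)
The proposal does not match the paper's proof and has several genuine gaps. The most important is that it misses the key structural change from Theorem~\ref{prime}: for prime power moduli the paper does \emph{not} run the square-free argument ``essentially verbatim'', but rather replaces the amplifier $\frac{1}{L}\sum_{\ell\in\mathscr{L}}\overline{\lambda(\ell)}\lambda(m\ell)$ by a \emph{partial} conductor lowering, imposing $p^r c \mid (m_1 - m_2)$ with $r<\gamma$ an optimization parameter; the paper explicitly motivates this by the fact that it ``introduces more terms in the diagonal while having a lesser impact in the off-diagonals''. The two final terms, and the hypothesis $N\le q^{1/5}(1+M/q)^{-2}$, emerge precisely from the interplay between the optimal choice $p^r\asymp p^{2\gamma/3}N^{2/3}(1+M/q)^{4/3}$ and the constraint $r\le 4\gamma/5$ needed for Lemma~\ref{ppowerchar}. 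Nothing of this appears in the proposal.

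Your architectural description also does not reflect the delta symbol setup. You propose to open $\tilde{\text{Kl}}_3$ by \eqref{hyper} and apply Voronoi directly to $\sum_m\lambda(m)e(mnbx/q)$ at modulus $q$. The actual delta symbol argument never opens the trace function at this stage: it duplicates $m$ into $m_1, m_2$ with the congruence $p^r c\mid(m_1-m_2)$, using auxiliary large primes $c\in\mathscr{C}$, then applies Voronoi to the $m_1$-sum at modulus $p^{r-k}c$ (not $q$) and Poisson to the $m_2$-sum at modulus $p^\gamma c$. This duplication is what lets one later apply Cauchy--Schwarz in $\tilde m_1$ and a second Poisson to reach the character sum $\mathfrak{C}_2$ handled by Lemma~\ref{ppowerchar}. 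What you describe is closer to the alternative estimate of Lemma~\ref{alt} (which uses neither Voronoi nor conductor lowering) and produces a strictly weaker bound for small $N$.

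Finally, your attribution of the $q^{13/20}N$ term to the Eisenstein polar term is incorrect. Remark~\ref{cuspi} shows the polar (``$0$-th'') term contributes $O(M/C)$ and is negligible because $C$ can be taken arbitrarily large. The term $q^{13/20}N$ arises from the third term $p^{\gamma/4+r/2}N$ in \eqref{optimise} under the cap $r\le 4\gamma/5$, giving $p^{\gamma/4+2\gamma/5}N = p^{13\gamma/20}N$. You would need to rebuild the argument from the conductor-lowering step onwards to have a correct proof.
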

\textbf{Remarks.}
\begin{itemize}
\item Each $(1+M/q)$ factor that appears in last two theorems can be eliminated by first dualising the $GL(2)$ sum when $M\gg q$ and proceeding same as below.
The restriction $N\leq q^{1/5}(1+M/q)^{-2}$ in Theorem \ref{primepower} is for the sake of technical simplicity and can be easily extended to $N\leq q^{1/2}(1+M/p)^{-2}$ with additional computations.
\item The choice of the trace function $\text{Kl}_3(\cdots)$ in Theorem \ref{prime} and Theorem \ref{primepower} is made for its application towards Theorem \ref{mainthm}, but the results should hold for more general trace functions (see \cite{fkm2} for examples).
\item Choosing $\mathscr{N}=\{1\}$ and $\alpha_1=1$, the two theorems give
\begin{equation}\label{michel}
\sum_{m\sim M}\lambda(m)\tilde{\text{Kl}}_3(mb, q)\ll_{\epsilon} q^{\epsilon}\left( M^{1/2}q^{3/8}+M/q^{1/4}+q^{3/4}\right),
\end{equation}and
\begin{equation}\notag
\sum_{m\sim M}\lambda(m)\tilde{\text{Kl}}_3(mb, q)\ll_{\epsilon}q^{\epsilon}(M^{1/2}q^{1/3}+q^{13/20}).
\end{equation}when $M\leq q$. These are non-trivial as long as $M\gg q^{3/4+\epsilon}$, which is the `Burgess range', and $M\gg q^{2/3+\epsilon}$, which is the `Weyl range', respectively. Hence, with the additional cancellation in the $n$-sum, Theorem \ref{primepower} is, on average, of sub-Weyl strength (with twists by trace functions). In the square-free case, the `$Nq^{3/4}$' term in Theorem \ref{mainthm}, which pops out as an additional term from a certain zero-frequency, prevents us from going beyond Burgess. It would be of interest to get an improvement over this term.
\item For composite moduli $q=p_1p_2$, with $p_1\asymp q^{\alpha}$, $\alpha>0$ not too large, estimates somewhere between the Weyl and the Burgess range can be obtained using a similar approach.
\end{itemize}

\begin{remark}[Notation]
In this paper the notation $\alpha\ll A$ will mean that for any $\epsilon>0$, there is a constant $c$ such that $|\alpha|\leq cAX^{\epsilon}$. The dependence of the constant on $\epsilon$, when occurring, will be ignored. We will follow the usual $\epsilon$-convention : the letter $\epsilon$ denotes sufficiently small positive quantity that may change from line to line.  We will also use the phrase ``negligible error'' by which we mean an error term $O_{B}(X^{-B})$ for an arbitrary $B>0$. The notation $x=y(q)$ will mean $x\equiv y\bmod q$ throughout the paper.
\end{remark}

\section{Preliminaries}

\subsection{Voronoi summation formula for $d_3(n)$.}We use the version due to X. Li \cite{lidiv}. Let
\begin{equation}\label{ternary}
\sigma_{0,0}(k,l)=\sum_{d_1|l}\sum_{\substack{d_2| \frac{l}{d_1}\\ (d_2,k)=1}}1=\sum_{a|(k,l)}\mu(a)d_3(l/a).
\end{equation}For $\phi(y)\in C_{c}(0,\infty), k=0,1$ and $\sigma>-1-2k$, set
\begin{equation}\label{Phi}
\Phi_k(y)=\frac{1}{2\pi i}\int_{(\sigma)}(\pi^3y)^{-s}\cdot \frac{\Gamma\left(\frac{1+s+2k}{2}\right)^3}{\Gamma\left(\frac{-s}{2}\right)^3}\tilde{\phi}(-s-k)\,ds,
\end{equation}where $\tilde{\phi}$ is the Mellin transform of $\phi$, and
\begin{equation}\label{Phisplit}
\Phi_{\pm}(y)=\Phi_0(y)\pm \frac{1}{i\pi^3y}\Phi_1(y).
\end{equation}
\begin{lemma}[X. Li]\label{vord3}
For integers $a,q\geq 1$ with $(a,q)=1$, with the above notation, we have
\begin{equation}\notag
\begin{aligned}
&\sum_{n\geq 1}d_3(n)e(an/q)\phi(n)\\
&=\frac{1}{q}\int_{0}^{\infty}P(\log y, q)\phi(y)dy\\
&+\frac{q}{2\pi^{3/2}}\sum_{\pm}\sum_{r|q}\sum_{m\geq 1}\frac{1}{rm}\sum_{r_1|r}\sum_{r_2|\frac{r}{r_1}}\sigma_{0,0}(r/(r_1r_2), m)S(\pm m, \overline{a}; q/r)\Phi_{\pm}(mr^2/q),
\end{aligned}
\end{equation}where $P(y,q)=A_0(q)+A_1(q)y+A_2(q)y^2$ is a quadratic polynomial whose coefficients depend only on $q$.
\end{lemma}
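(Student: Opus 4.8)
The statement is the additively twisted $GL(3)$ Voronoi summation formula for the minimal parabolic Eisenstein series (whose Hecke coefficients are $d_3$), and I would derive it exactly as in \cite{lidiv}, from the analytic continuation and functional equation of the additively twisted ternary divisor Dirichlet series. First I would record, by Mellin inversion and the rapid decay of $\tilde\phi$ on vertical lines, that for $\Re s=\sigma>1$
\[
\sum_{n\geq 1}d_3(n)e(an/q)\phi(n)=\frac{1}{2\pi i}\int_{(\sigma)}\tilde\phi(s)\,\mathcal{D}(s)\,ds,\qquad \mathcal{D}(s):=\sum_{n\geq 1}\frac{d_3(n)e(an/q)}{n^{s}},
\]
so that everything reduces to understanding $\mathcal{D}(s)$: locating its poles and establishing a functional equation reflecting $s\mapsto 1-s$.

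The core of the argument is this functional equation. Writing $d_3=\mathbf 1*\mathbf 1*\mathbf 1$ and splitting the $n$-sum according to divisibility by the primes dividing $q$, one expresses $\mathcal{D}(s)$ as a finite combination, indexed by divisors $r\mid q$ together with the further data $r_1\mid r$, $r_2\mid r/r_1$, of Estermann-type functions $\sum_{n}\sigma(n)e(b n/(q/r))n^{-s}$. Each of these satisfies the Estermann functional equation $s\leftrightarrow 1-s$ in which the additive character is replaced, via Gauss sums, by a Kloosterman sum $S(\pm m,\overline a;q/r)$ on the dual side; iterating the reflection once for each of the three copies of $\zeta$ yields that $\mathcal{D}(s)$ continues meromorphically to $\mathbb{C}$ with a single pole, of order three, at $s=1$, and obeys a functional equation whose archimedean factor is the ratio of triple Gamma factors appearing in \eqref{Phi} — with the auxiliary parameter $k\in\{0,1\}$ recording the even/odd part of the archimedean reflection — and whose arithmetic factor is precisely the weighted Kloosterman expansion $\sum_{r\mid q}\frac1r\sum_{r_1\mid r}\sum_{r_2\mid r/r_1}\sigma_{0,0}(r/(r_1r_2),m)\,S(\pm m,\overline a;q/r)\,m^{-(1-s)}$, the sign $\pm$ being tied to the $k$-split.

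With this in hand I would move the contour in the Mellin integral from $\Re s=\sigma$ to $\Re s=-1-2k$, treating the $k=0$ and $k=1$ pieces separately as in \eqref{Phi}. The only pole crossed is at $s=1$; extracting the residue of the order-three pole against $\tilde\phi$ produces the stated main term $\tfrac1q\int_0^\infty P(\log y,q)\phi(y)\,dy$, with $P$ a quadratic polynomial in $\log y$ whose coefficients come from the Laurent data of $\mathcal{D}$ at $s=1$ and hence depend only on $q$. On the shifted line I substitute the functional equation, interchange the now absolutely convergent sum over $m,r,r_1,r_2$ with the $s$-integral, and for each fixed $m$ recognize the remaining integral
\[
\frac{1}{2\pi i}\int (\pi^3 y)^{-s}\,\frac{\Gamma\!\left(\tfrac{1+s+2k}{2}\right)^3}{\Gamma\!\left(\tfrac{-s}{2}\right)^3}\,\tilde\phi(-s-k)\,ds
\]
with $y=mr^2/q$ as exactly $\Phi_k(y)$; assembling the combination $\Phi_\pm=\Phi_0\pm\tfrac{1}{i\pi^3 y}\Phi_1$ of \eqref{Phisplit} then repackages the two parities together with the two signs of the Kloosterman sum, giving the identity of the lemma.

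I expect the genuinely delicate step to be the bookkeeping inside the functional equation: correctly tracking how the conductor drops from $q$ to $q/r$ as common factors are extracted, and verifying that the arithmetic weights assemble into $\sigma_{0,0}(r/(r_1r_2),m)$ with exactly the triple divisor sum over $r_1\mid r$, $r_2\mid r/r_1$ — all of which is carried out in \cite{lidiv}. Alternatively, one could deduce the formula by specializing the general $GL(3)$ Voronoi summation formula to the minimal parabolic Eisenstein series, but the same conductor combinatorics then reappears in the guise of the ramified local computations.
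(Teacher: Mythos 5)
The paper does not prove Lemma \ref{vord3}; it is stated as a citation of X.~Li \cite{lidiv}, so there is no internal proof to compare against. Your sketch is a faithful high-level account of the derivation in the cited source: Mellin inversion against the additively twisted Dirichlet series $\mathcal D(s)=\sum d_3(n)e(an/q)n^{-s}$, meromorphic continuation and functional equation obtained by decomposing into Estermann/Hurwitz-type series and iterating the $\zeta$ functional equation three times (with the conductor dropping from $q$ to $q/r$ as common factors with $q$ are extracted, producing the $r\mid q$, $r_1\mid r$, $r_2\mid r/r_1$ combinatorics and the $\sigma_{0,0}$ weights), a contour shift past the order-three pole at $s=1$ yielding the quadratic-in-$\log$ main term, and recognition of the shifted-line integral as $\Phi_k$, reassembled via \eqref{Phisplit} into $\Phi_\pm$ together with the two signs of the Kloosterman sum. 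One small imprecision: you place the shifted contour at $\Re s=-1-2k$, which is the boundary of the admissible range in \eqref{Phi}; one should stop strictly to the right of that line (any $-1-2k<\Re s<0$ works for convergence of the dual series), but this is cosmetic and does not affect the argument. Your acknowledgement that the delicate part is the conductor bookkeeping, deferred to \cite{lidiv}, is entirely appropriate given that the paper itself treats the lemma as a black box.
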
When $\phi$ is a nice weight function the corresponding transform $\Phi_{\pm}$ also behaves nicely as conveyed by the following lemma.
\begin{lemma}\label{Phiprop}
Suppose the smooth function $\phi(y)$ is supported in $[X,2X], X\geq 1$ and satisfies $y^j\phi^{(j)}(y)\ll_j 1, j\geq 0$. Then $\Phi_{\pm}(y)\ll_{B}X^{-B}$ unless  $yX\ll X^{\epsilon}$ in which case
\begin{equation}\notag
y^j\Phi^{(j)}_{\pm}(y)\ll_j \min\{yX, 1\}.
\end{equation}
\end{lemma}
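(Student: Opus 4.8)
The plan is to read the three assertions off the integral representation \eqref{Phi} by means of contour shifts, using two standard facts about the integrand. Since $\phi$ is smooth, supported in $[X,2X]$, and satisfies $y^{j}\phi^{(j)}(y)\ll_{j}1$, integrating by parts $j$ times in the Mellin transform gives
\[
\tilde\phi(w)=\frac{(-1)^{j}}{w(w+1)\cdots(w+j-1)}\int_{X}^{2X}\phi^{(j)}(y)\,y^{w+j-1}\,dy\ll_{j}\frac{X^{\operatorname{Re}w}}{(1+|w|)^{j}}
\]
for $w$ in any fixed vertical strip, the constant depending only on $j$, and $\tilde\phi$ is entire. By Stirling's formula, on $\operatorname{Re}s=\sigma$ the archimedean ratio in \eqref{Phi} is
\[
\frac{\Gamma\!\left(\frac{1+s+2k}{2}\right)^{3}}{\Gamma\!\left(\frac{-s}{2}\right)^{3}}\ll_{\sigma}(1+|\operatorname{Im}s|)^{3\sigma+3k+3/2},
\]
the factors $e^{-3\pi|\operatorname{Im}s|/4}$ from the numerator and denominator cancelling; its only poles are triple poles at $s=-1-2k-2\ell$, $\ell\ge0$, so the integrand of $\Phi_{k}$ is holomorphic for $\operatorname{Re}s>-1-2k$. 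Multiplying the two estimates, on any admissible line $\operatorname{Re}s=\sigma>-1-2k$ the integrand of $\Phi_{k}$ is $\ll_{j,\sigma}X^{-k}(yX)^{-\sigma}(1+|\operatorname{Im}s|)^{3\sigma+3k+3/2-j}$, which is absolutely integrable once $j$ is taken large, so that $\Phi_{k}(y)\ll_{j,\sigma}X^{-k}(yX)^{-\sigma}$ with a constant independent of $X$.

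For the negligibility, suppose $yX\ge X^{\epsilon}$ and shift the contour in \eqref{Phi} to $\operatorname{Re}s=\sigma=B/\epsilon$; no poles are crossed. Then $\Phi_{0}(y)\ll(yX)^{-B/\epsilon}$ and $y^{-1}|\Phi_{1}(y)|\ll X^{-1}y^{-1}(yX)^{-B/\epsilon}=(yX)^{-B/\epsilon-1}$, so by \eqref{Phisplit} and $yX\ge X^{\epsilon}\ge1$ we get $\Phi_{\pm}(y)\ll(yX)^{-B/\epsilon}\le X^{-B}$, as desired.

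For $yX\ll X^{\epsilon}$ split according to the size of $yX$. If $1\le yX\ll X^{\epsilon}$, the choice $\sigma=0$ above gives $\Phi_{0}(y)\ll1$ and $y^{-1}|\Phi_{1}(y)|\ll(yX)^{-1}\ll1$, hence $\Phi_{\pm}(y)\ll1$. If $yX\le1$, shift the contour for $\Phi_{k}$ past its first pole to $\operatorname{Re}s=-1-2k-\eta$ with $0<\eta<2$: the shifted integral is $\ll_{\eta}(yX)^{1+2k+\eta}X^{-k}\ll(yX)^{1+2k}X^{-k}$ by the convergence bound, and the residue at the triple pole $s=-1-2k$, computed from the Taylor expansions of $(\pi^{3}y)^{-s}$, $\Gamma(-s/2)^{-3}$ and $\tilde\phi(-s-k)$ there, satisfies
\[
\operatorname*{Res}_{s=-1-2k}\ll y^{1+2k}X^{1+k}(1+|\log y|+\log X)^{2}\ll(yX)^{1+2k}X^{-k},
\]
the logarithmic factor being absorbed by the $\ll$-convention (it is $\ll X^{\epsilon}$ on the ranges $y\gg X^{-O(1)}$ relevant to Lemma \ref{vord3}, where $y=mr^{2}/q$). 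Hence $\Phi_{0}(y)\ll yX$ and $y^{-1}|\Phi_{1}(y)|\ll X^{-1}y^{-1}(yX)^{3}=(yX)^{2}$, so $\Phi_{\pm}(y)\ll yX+(yX)^{2}\ll yX$. The two sub-cases give $\Phi_{\pm}(y)\ll\min\{yX,1\}$. For the derivatives, note that $y^{j}\Phi_{k}^{(j)}(y)$ is the same integral with the polynomial factor $(-1)^{j}s(s+1)\cdots(s+j-1)$ inserted; this does not move the poles (it can only lower their orders) and is absorbed by enlarging $j$ in the decay of $\tilde\phi$, so rerunning the three estimates together with Leibniz's rule applied to $y^{-1}\Phi_{1}(y)$ yields $y^{j}\Phi_{\pm}^{(j)}(y)\ll_{j}\min\{yX,1\}$.

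The argument is routine; the point that needs attention is uniformity in $X$, namely that the implied constants do not deteriorate as the contour is pushed — this is exactly what the $X$-uniform hypotheses $y^{j}\phi^{(j)}(y)\ll_{j}1$ guarantee — and, secondarily, the bookkeeping of the logarithmic factors produced by the triple pole in the small-argument regime, recorded in the last display.
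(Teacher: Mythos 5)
Your proof is correct and follows essentially the same contour-shifting argument as the paper: bound $\tilde\phi$ by repeated integration by parts, shift right for rapid decay when $yX$ is large, and shift left past the first (triple) pole for the $yX$ bound when $yX\le 1$. The only difference is that you track the $\Phi_{1}$ contribution and the logarithmic factors from the triple pole explicitly, whereas the paper just asserts that it suffices to treat $\Phi_{0}$ (which, as your computation of $y^{-1}\Phi_{1}\ll (yX)^{2}$ shows, relies on the extra pole shift available for $k=1$) and absorbs the logs into the $\ll$-convention — so your write-up is a slightly more careful version of the same proof.
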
	
\begin{proof}
From \eqref{Phisplit}, it is enough to prove the lemma for $\Phi_0(y)$. From the definition \eqref{Phi} we have for $j\geq 0$,
\begin{equation}\notag
y^{j}\Phi_0^{(j)}(y)=(-1)^j\frac{1}{2\pi i}\int_{(\sigma)}s(s+1)\cdots (s+j)(\pi^3y)^{-s}\cdot \frac{\Gamma\left(\frac{1+s}{2}\right)^3}{\Gamma\left(\frac{-s}{2}\right)^3}\tilde{\phi}(-s)\,ds.
\end{equation}Note that $\tilde{\phi}(-s)\ll X^{-\Re(s)}$. Shifting the contour above to the right $\sigma=A>0$ and trivially estimating we obtain
\begin{equation}\label{fp}
y^{j}\Phi_0^{(j)}(y)\ll_{A,j} (yX)^{-A}.
\end{equation}Since $A$ is arbitrary, the first part of lemma follows. On the other hand, shifting the contour to the left $\sigma=-3/2$ while picking up the residue at $\sigma=-1$ we obtain
\begin{equation}\label{sp}
y^{j}\Phi_0^{(j)}(y)\ll_j yX+(yX)^{3/2}.
\end{equation}The second part of the lemma from \eqref{fp} and \eqref{sp}.
\end{proof}

\subsection{Voronoi summation formula for $GL(2)$.}See appendix A.4 of \cite{kmv} and appendix of \cite{kmvtau} for details.
\begin{lemma}\label{gl2vor}
Let $\lambda(n)$ be either the $n$-th Fourier coefficient of a Maass cusp form with Laplacian eigenvalue $1/4+\nu^2$, $\nu\geq 0$, or $\lambda(n)=\sigma_{w}(n), w\in\mathbb{C}$. For integers $a,q\geq 1$ with $(a,q)=1$, $h(x)\in C_{c}(0,\infty)$, we have
\begin{equation}\notag
\begin{aligned}
\sum_{n=1}^{\infty}\lambda(n)e\left(\frac{an}{q}\right)h(n)=\frac{1}{q}\int_{0}^{\infty}g(q,x) h(x)\,dx+\frac{1}{q}\sum_{\pm}\sum_{n\geq 1}\lambda(n)e\left(\frac{\pm\overline{a}n}{q}\right)H^{\pm}\left(\frac{n}{q^2}\right),
\end{aligned}
\end{equation}where
\begin{itemize}
\item if $\lambda(n)$ corresponds to Maass form, then $g(q,x)=0$ and
\begin{equation*}
\begin{aligned}
& H^{-}(\alpha)=\frac{-\pi}{\sin (\pi i\nu)}\int_{0}^{\infty}h(y)\{J_{2i\nu}-J_{-2i\nu }\}(4\pi\sqrt{y\alpha})dy \,,\\
& H^{+}(\alpha)=4\varepsilon_f\cosh (\pi\nu)\int_{0}^{\infty}h(y)K_{2i\nu}(4\pi\sqrt{y\alpha})dy\,,
\end{aligned}
\end{equation*}for $\nu>0$, and 
\begin{equation*}
H^{-}(\alpha)=-2\pi\int_{0}^{\infty}h(y)Y_0(4\pi\sqrt{y\alpha})dy,\,\,\,\text{and}\,\,\, H^{+}(\alpha)=4\varepsilon_f\int_{0}^{\infty}h(y)K_{0}(4\pi\sqrt{y\alpha})dy\,,
\end{equation*}for $\nu=0$.
\item If $\lambda(n)=\sigma_{0}(n)=d(n)$, then $g(q,x)=\log(\sqrt{x}/q)+\gamma$ and
\begin{equation}\notag
\begin{aligned}
& H^{-}(\alpha)=-2\pi\int_{0}^{\infty} h(y)Y_0\left(4\pi\sqrt{y\alpha}\right)\,dy \,,\\
& H^{+}(\alpha)=4\int_{0}^{\infty}h(y)K_{0}(4\pi\sqrt{y\alpha})dy\,.
\end{aligned}
\end{equation}
\item If  $\lambda(n)=\sigma_{w}(n), w\neq 0$, then $g(q,x)=\zeta(1+w)(x/q)^w+\zeta(1-w)q^{w}$ and
\begin{equation}\notag
\begin{aligned}
& H^{-}(\alpha)=\int_{0}^{\infty} h(y)\tilde{Y}_{w}\left(4\pi\sqrt{y\alpha}\right)\,dy \,,\\
& H^{+}(\alpha)=\int_{0}^{\infty}h(y)\tilde{K}_{w}(4\pi\sqrt{y\alpha})dy\,,
\end{aligned}
\end{equation}where $\tilde{Y}_w, \tilde{K}_{w}$ are closely related to $Y_w, K_w$, and have the integral representations
\begin{equation}\notag
\begin{aligned}
&\tilde{Y}_{w}(x)=\frac{1}{2\pi i}\int_{(2)}(x/2)^{-s}\Gamma(s-w)\Gamma(s+w) \cos (\pi s)ds,\\
&\tilde{K}_{w}(x)=\frac{\cosh (\pi|w|)}{2\pi i}\int_{(2)}(x/2)^{-s}\Gamma(s-w)\Gamma(s+w) ds.
\end{aligned}
\end{equation}
\end{itemize}
\end{lemma}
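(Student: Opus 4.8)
This is the classical $GL(2)$ Voronoi summation formula, so the plan is to follow the standard derivation from the analytic continuation and functional equation of the additively twisted Dirichlet series $D(s;a/q)=\sum_{n\ge 1}\lambda(n)e(an/q)n^{-s}$. Taking $h\in C_c(0,\infty)$ smooth, its Mellin transform $\check h(s)=\int_0^\infty h(x)x^{s-1}\,dx$ is entire and decays faster than any polynomial on vertical lines, so Mellin inversion gives
\[
\sum_{n\ge 1}\lambda(n)e(an/q)h(n)=\frac{1}{2\pi i}\int_{(\sigma)}\check h(s)\,D(s;a/q)\,ds
\]
for $\sigma$ large, and the entire argument consists of pushing the line of integration far to the left.

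First I would establish the continuation and functional equation of $D(s;a/q)$. For a Maass cusp form $f$, the standard argument --- take the Mellin transform $\int_0^\infty f(a/q+iy)\,y^{s}\,dy/y$, whose unfolding gives a ratio of archimedean $\Gamma$-factors times a combination of $D(s;a/q)$ and $D(s;-a/q)$, then split the $y$-integral and apply the modular transformation $y\mapsto 1/(q^2y)$, which carries the cusp $a/q$ to $-\overline{a}/q$ --- shows that $D(s;a/q)$ continues to an entire function satisfying a functional equation $D(s;a/q)=(\text{power of }q)\cdot\varepsilon_f\,\gamma_{\pm}(s)\,D(1-s;-\overline{a}/q)$, with $\gamma_\pm(s)$ the $\Gamma$-ratio built from $\Gamma_{\mathbb{R}}(s\pm i\nu)$ and $\pm$ the parity of $f$; the holomorphic case is entirely analogous. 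For $\lambda(n)=\sigma_w(n)$ the twisted series factors through Lerch zeta functions into an expression in $\zeta(s)$ and $\zeta(s-w)$, so both the continuation and the functional equation reduce to those of $\zeta$, and $D(s;a/q)$ acquires a pole at $s=1$ and, when $w\neq 0$, one at $s=1+w$ (a double pole at $s=1$ when $w=0$).

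Next I would shift the contour to $\Re s=1-\sigma$. In the cuspidal case there are no poles, which is why $g(q,x)=0$. In the Eisenstein case, extracting the residues of $\check h(s)D(s;a/q)$ at $s=1$ and $s=1+w$ and recognising $\check h$ at these points as $\int_0^\infty h(x)\,dx$ and $\int_0^\infty h(x)x^{w-1}\,dx$ (and, for $w=0$, the derivative of $\check h$, which produces the $\log$ and the Euler constant) yields exactly the main term $\tfrac1q\int_0^\infty g(q,x)h(x)\,dx$ with $g$ as stated --- a short residue computation. On the shifted line I would insert the functional equation, substitute $s\mapsto 1-s$, expand $D(1-s;\mp\overline{a}/q)=\sum_{m\ge 1}\lambda(m)e(\mp\overline{a}m/q)m^{s-1}$ (absolutely convergent there), and interchange the $m$-sum with the $s$-integral --- legitimate by the rapid decay of $\check h$ on vertical lines, or after truncating the sum and taking a limit. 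With the powers of $q$ organised so that the argument is $m/q^2$ and the prefactor is $1/q$, this produces $\tfrac1q\sum_{\pm}\sum_{m\ge 1}\lambda(m)e(\pm\overline{a}m/q)\,H^{\pm}(m/q^2)$ with $H^{\pm}(\alpha)=\tfrac{1}{2\pi i}\int\check h(1-s)\,\gamma_{\pm}(s)\,(c\alpha)^{-s}\,ds$ for a harmless constant $c$.

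The last and only genuinely computational step is to identify these Mellin--Barnes integrals with the stated Bessel kernels. Writing $\check h(1-s)=\int_0^\infty h(y)y^{-s}\,dy$ and interchanging, $H^{\pm}(\alpha)=\int_0^\infty h(y)\bigl(\tfrac{1}{2\pi i}\int\gamma_{\pm}(s)(cy\alpha)^{-s}\,ds\bigr)\,dy$, and by the classical Mellin transforms of Bessel functions together with the reflection formula applied to the $\Gamma$-ratio --- which is the source of the constants $-\pi/\sin(\pi i\nu)$, $4\varepsilon_f\cosh(\pi\nu)$, $\cosh(\pi|w|)$, $-2\pi$, $4$, etc. --- the inner integral equals $\{J_{2i\nu}-J_{-2i\nu}\}(4\pi\sqrt{y\alpha})$ and $K_{2i\nu}(4\pi\sqrt{y\alpha})$ for Maass forms, $Y_0$ and $K_0$ in the $\nu=0$ and $d(n)$ cases, and $\tilde Y_w,\tilde K_w$ (with precisely the Mellin--Barnes representations quoted) for $\sigma_w$. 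The main obstacle is exactly this uniform bookkeeping of archimedean factors, root numbers and signs across the Maass and Eisenstein cases, together with extracting the poles carefully enough that the main term comes out as $g(q,x)$; all of this is carried out in detail in the appendices of \cite{kmv} and \cite{kmvtau} cited above.
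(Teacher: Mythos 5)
The paper does not prove this lemma itself; it simply cites the appendices of \cite{kmv} and \cite{kmvtau}, which derive the $GL(2)$ Voronoi formula in exactly the way you describe. Your sketch --- Mellin inversion against the additively twisted $L$-function $D(s;a/q)$, meromorphic continuation and functional equation via the modular transformation $y\mapsto 1/(q^2y)$ carrying $a/q$ to $-\overline a/q$, contour shift past the poles (none in the cuspidal case, a simple pole at $s=1$ and one at $s=1+w$, degenerating to a double pole when $w=0$, in the Eisenstein case) to produce $g(q,x)$, then inserting the functional equation, expanding the dual Dirichlet series, interchanging, and recognising the Mellin--Barnes kernels as Bessel functions via the reflection formula --- is precisely the standard route taken there, and the steps are all in order. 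Your passing mention of the holomorphic case is harmless but outside the statement, which only covers Maass cusp forms and $\sigma_w$.

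One point worth being careful about if you were to execute this in full: in the Eisenstein case the twisted series $\sum_n\sigma_w(n)e(an/q)n^{-s}$ does not factor cleanly into two zeta functions because the additive twist couples the two divisors; you need to sort according to $\gcd$ with $q$ or pass to Hurwitz/Lerch zetas as you indicate, and it is this bookkeeping that produces the $q$-dependent form of $g(q,x)=\zeta(1+w)(x/q)^w+\zeta(1-w)q^w$ rather than a bare product of residues. That is the only part where the ``short residue computation'' requires genuine care, and it is carried out explicitly in the appendix of \cite{kmvtau}.
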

\subsection{Character sum estimates}The endgame of the paper consists of getting square-root cancellations in certain character sums which we record here for convenience. Let $p$ be a prime and $m\in\mathbb{Z}$. Suppose $s_j,t_j,\lambda_j, j=1,2$ are integers such that $(s_j,p)=(\lambda_j,p)=1, j=1,2$. For $u\leq \gamma $, define
\begin{equation}\label{charrecall0}
\mathfrak{C}_{\gamma,u}=\mathop{\sideset{}{^*}\sum\sideset{}{^*}\sum}_{\substack{a_1,a_2 (p^u)\\\lambda_1\overline{a_1}-\lambda_2\overline{a_2}=m (p^u)}}S(1, \overline{s_1p^{\gamma-u}a_1+t_1}, p^{\gamma})\overline{S}(1, \overline{s_2p^{\gamma-u}a_2+t_2}, p^{\gamma}).
\end{equation}For $\gamma=1$, such character sums has been studied in \cite{df} using the $l$-adic techniques developed by Deligne and Katz, and in \cite{fkm2} in the broader framework of trace functions. When $\gamma>1$, an estimate for $\mathfrak{C}_{\gamma,u}$ can be obtained in an elementary manner by reducing the sum to a set of congruence conditions. We begin with latter case.
\begin{lemma}\label{ppowerchar}
Suppose $\gamma>1, u\leq 4\gamma/5, m\neq 0$ and $(2t_j,p)=1$. If $u/2< \gamma-u$ or $\nu_p(m)<\gamma-u$, then
\begin{equation}\notag
\mathfrak{C}_{\gamma,u}\ll p^{\gamma+u/2+\epsilon(u)/2}\cdot p^{\nu_p(m)},
\end{equation}and if $u/2\geq \gamma-u$ and $\nu_p(m)\geq \gamma-u$, then $\mathfrak{C}_{\gamma,u}$ vanishes unless $t_1^{-3/2}s_1\lambda_1=t_2^{-3/2}s_2\lambda_2\,\,({p^{\gamma-u}})$, in which case
\begin{equation}\notag
\mathfrak{C}_{\gamma,u}\ll p^{\gamma+u}.
\end{equation}Here $\epsilon(u)=0$ or $1$ depending on $u$ is even or odd respectively.
\end{lemma}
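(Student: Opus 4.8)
The plan is to evaluate the two Kloosterman sums in \eqref{charrecall0} explicitly — legitimate because $p$ is odd and $\gamma\ge 2$ — to use the resulting $p$-adic stationary point to collapse $\mathfrak{C}_{\gamma,u}$ to a one-variable complete exponential sum modulo $p^u$, and then to analyse that sum by the $p$-adic method of stationary phase, keeping track of the degeneracy induced by $\nu_p(m)$.

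First I would set $n_j=\overline{s_jp^{\gamma-u}a_j+t_j}$, a unit since $(t_j,p)=1$, with $n_j\equiv\bar t_j\pmod{p^{\gamma-u}}$ and $\gamma-u\ge 1$. The standard evaluation of Kloosterman sums to odd prime power moduli gives $S(1,n_j;p^\gamma)=0$ unless $n_j$ — equivalently $t_j$ — is a square mod $p$ (if not, $\mathfrak{C}_{\gamma,u}=0$ and there is nothing to prove), and otherwise, fixing square roots $\sqrt{\bar t_j}$,
\[
S(1,n_j;p^\gamma)=p^{\gamma/2}\,\eta_j\sum_{\varepsilon_j=\pm1}\chi_j(\varepsilon_j)\,e\!\left(\frac{2\varepsilon_j\rho_j(a_j)}{p^\gamma}\right),\qquad \rho_j(a_j)^2\equiv n_j\pmod{p^\gamma},
\]
where $\rho_j(a_j)$ is normalised so that $\rho_j(a_j)\equiv\sqrt{\bar t_j}\pmod p$, and $\eta_j$ (modulus $1$) and the signs $\chi_j(\pm1)$ depend only on $p,\gamma$ and $t_j\bmod p$. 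Hence $\mathfrak{C}_{\gamma,u}\ll p^\gamma\max_{\varepsilon_1,\varepsilon_2}|T_{\varepsilon_1,\varepsilon_2}|$, where $T_{\varepsilon_1,\varepsilon_2}$ is the sum of $e\big((2\varepsilon_1\rho_1(a_1)-2\varepsilon_2\rho_2(a_2))/p^\gamma\big)$ over the pairs $(a_1,a_2)$ in \eqref{charrecall0}. Writing $s_jp^{\gamma-u}a_j+t_j=t_j(1+p^{\gamma-u}s_j\bar t_ja_j)$ and using the $p$-adic binomial series, $2\varepsilon_j\rho_j(a_j)\equiv 2\varepsilon_j\sqrt{\bar t_j}\sum_{k\ge 0}\binom{-1/2}{k}(p^{\gamma-u}s_j\bar t_ja_j)^k\pmod{p^\gamma}$, whose $k$-th term is divisible by $p^{(\gamma-u)k}$; only the (at most four, since $u\le 4\gamma/5$) indices $k$ with $(\gamma-u)k\le\gamma-1$ contribute. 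After pulling out the constant term, the $k=1$ term contributes a phase $e(-\varepsilon_j s_j\bar t_j^{3/2}a_j/p^u)$ and the $k\ge 2$ terms contribute phases to strictly smaller moduli.

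Next I would use the congruence $\lambda_1\bar a_1-\lambda_2\bar a_2\equiv m\pmod{p^u}$ to pin one variable: if $(\lambda_1-ma_1,p)=1$ then $a_2\equiv\lambda_2a_1\overline{(\lambda_1-ma_1)}\pmod{p^u}$, and substituting everything back,
\[
T_{\varepsilon_1,\varepsilon_2}=e\!\left(\tfrac{c_0}{p^\gamma}\right)\sum_{\substack{a_1\bmod p^u\\(a_1(\lambda_1-ma_1),p)=1}}e\!\left(\frac{f(a_1)}{p^u}\right)\prod_{k\ge 2}e\!\left(\frac{g_k(a_1)}{p^{\gamma-(\gamma-u)k}}\right),
\]
a complete one-variable rational exponential sum with $c_0$ independent of $a_1$, $f(a_1)=c\,a_1+d\,a_1\overline{(\lambda_1-ma_1)}$, units $c=-\varepsilon_1s_1\bar t_1^{3/2}$ and $d=\varepsilon_2s_2\bar t_2^{3/2}\lambda_2$, and $g_k$ rational functions regular on the summation range (note the trivial bound $T_{\varepsilon_1,\varepsilon_2}\ll p^u$). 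Then I would apply $p$-adic stationary phase. One has $f'(a_1)=c(\lambda_1-ma_1)^{-2}\big((\lambda_1-ma_1)^2-D\big)$ with $D=-\bar cd\lambda_1$, and the $g_k$-phases contribute derivatives $\equiv 0\pmod{p^{\gamma-u}}$, so they do not disturb the leading analysis; set $\delta=\nu_p(D-\lambda_1^2)=\nu_p(c\lambda_1+d)$. Since $\lambda_1-ma_1\equiv\lambda_1\pmod{p^{\nu_p(m)}}$: when $\nu_p(m)=0$ there are at most two nondegenerate (unit-Hessian) stationary points — or none — each contributing $\ll p^{u/2+\epsilon(u)/2}$; when $\nu_p(m)\ge 1$ and $\delta\ne\nu_p(m)$ the total phase has derivative of constant $p$-valuation $\min(\delta,\nu_p(m))$; and when $\delta\ge\nu_p(m)$ the zero locus of $f'$ modulo $p^u$ is a single coset of $p^{\nu_p(m)}$ residues on which the total phase is constant modulo $p^u$. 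The usual stationary-phase bookkeeping (the factor $p^{\epsilon(u)/2}$ arising from the quadratic Gauss sums) then gives $T_{\varepsilon_1,\varepsilon_2}\ll p^{(u+\nu_p(m))/2+\epsilon(u)/2}+p^{\nu_p(m)}\ll p^{u/2+\epsilon(u)/2+\nu_p(m)}$; combined with the trivial bound (used when $\nu_p(m)$ is large relative to $u$, or in the finitely many small-$u$ configurations where the clean formulae are unavailable) this yields the first estimate whenever $u/2<\gamma-u$ or $\nu_p(m)<\gamma-u$.

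For the complementary case $u/2\ge\gamma-u$ and $\nu_p(m)\ge\gamma-u$ (so $u\ge 2$, and $\nu_p(m)>\delta$ whenever $\delta<\gamma-u$), the analysis above shows the total phase has derivative of constant $p$-valuation $\delta<u$ throughout the range, and then repeatedly reducing the complete sum over $a_1\bmod p^u$ to shorter sums (using $u\ge 2$ and the large Hessian valuation) forces $T_{\varepsilon_1,\varepsilon_2}$, and hence $\mathfrak{C}_{\gamma,u}$, to vanish unless $\delta\ge\gamma-u$, i.e.\ unless $c\lambda_1\equiv-d\pmod{p^{\gamma-u}}$; unwinding $c,d$ and absorbing the signs $\varepsilon_j$ into the two choices of $t_j^{-3/2}=\bar t_j^{3/2}$, this is exactly $t_1^{-3/2}s_1\lambda_1\equiv t_2^{-3/2}s_2\lambda_2\pmod{p^{\gamma-u}}$, and when it holds the trivial bound gives $\mathfrak{C}_{\gamma,u}\ll p^{\gamma+u}$. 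The hard part will be the bookkeeping in these last two steps: untangling how the two sources of degeneracy — the valuation $\delta$ of $D-\lambda_1^2$ and the valuation $\nu_p(m)$ of the rational map $a_1\mapsto a_1\overline{(\lambda_1-ma_1)}$ — jointly determine the size of the stationary coset and the Hessian valuation on it, uniformly over the admissible range of $u$ (where up to four correction phases $g_k$ occur) and including the finitely many small-$u$ cases where one must fall back on the trivial bound; the hypothesis $u\le 4\gamma/5$ is precisely what keeps this case analysis finite.
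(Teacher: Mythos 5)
Your plan is essentially the paper's proof, reorganised: you evaluate $S(1,\cdot\,;p^\gamma)$ by the explicit prime-power formula, expand the square root in a $p$-adic binomial series (with $u\le 4\gamma/5$ controlling which correction terms survive modulo $p^\gamma$), eliminate $a_2$ via the congruence, and apply $p$-adic stationary phase to the resulting one-variable complete sum modulo $p^u$. The paper reaches the same endpoint by splitting both $a_j=p^{u/2}\alpha_j+\beta_j$ and executing the linear $\alpha_1$-sum -- that linear sum \emph{is} the stationary-phase reduction written out by hand -- so the two routes are equivalent, with your elimination of $a_2$ first being arguably cleaner. One slip in your case split: the clause ``when $\delta\ge\nu_p(m)$ the zero locus of $f'$ modulo $p^u$ is a single coset of $p^{\nu_p(m)}$ residues'' is correct only when $\delta=\nu_p(m)$; for $\delta>\nu_p(m)$ the derivative has constant $p$-valuation $\nu_p(m)$ and the stationary locus modulo $p^{\lfloor u/2\rfloor}$ is empty (which only helps, as it gives $T_{\varepsilon_1,\varepsilon_2}=0$). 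With that corrected, and the counting over the critical coset carried through as you outline (and you rightly flag this as the nontrivial bookkeeping), your argument reproduces both conclusions of the lemma exactly as in the paper.
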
	
\begin{proof}
Without loss of generality we can assume $\nu_p(m)<u/2$, since otherwise the claim follows after a trivial estimation of the Kloosterman sums. We perform some initial transformation. First suppose $u$ is even. Then for $j=1,2$, we can write
\begin{equation}\label{aidecomp0}
a_j=p^{u/2}\alpha_j+\beta_j,\,\,1\leq\alpha_j,\beta_j\leq p^{u/2},\,\,(\beta_j,p)=1.
\end{equation}From \eqref{aidecomp0} we obtain
\begin{equation}\label{splitting0}
\overline{a_j}=\overline{\beta_j}-p^{u/2}\overline{\beta_j}^2\alpha_j\,\,({p^{u}}).
\end{equation}Plugging \eqref{splitting0} we see that the congruence
\begin{equation}\notag
\lambda_1\overline{a_1}-\lambda_2\overline{a_2}=m \,\,({p^u}),
\end{equation}is equivalent to 
\begin{equation}\label{alpha20}
\begin{aligned}
&\overline{\beta_2}=\lambda_1\overline{\lambda_2}\overline{\beta_1}-\overline{\lambda_2}m\,\,({p^{u/2}}),\\
&\alpha_2=\lambda_1\overline{\lambda_2}\beta_2^2\overline{\beta_1}^2\alpha_1-g(\beta_1)\,\, ({p^{u/2}}),
\end{aligned}
\end{equation}where 
\begin{equation}\notag
g(\beta_1)=\overline{\lambda_2}\beta_2^2\cdot\frac{(\lambda_1\overline{\beta_1}-\lambda_2\overline{\beta_2}-m)}{p^{u/2}}.
\end{equation}We proceed for the explicit evaluation of $\mathfrak{C}_{\gamma, u}$ \eqref{charrecall0} in terms of these decomposition. We use the following evaluation of the Kloosterman sums modulo prime powers, which can be found in \cite[(12.39)]{iwaniec} :
\begin{equation}\label{kloosterman0}
S(1,\beta, p^{\gamma})=
\begin{cases}
2\left(\frac{\ell}{p}\right)^{\gamma}p^{\gamma/2}\Re\,\varepsilon_{p^{\gamma}}\,e(2\ell/p^{\gamma}), & \left(\frac{\beta}{p}\right)=1,\\
0, & \left(\frac{\beta}{p}\right)=-1,
\end{cases}
\end{equation}where $\ell^2=\beta \,\,(p^{\gamma})$, $\left(\frac{\cdot}{\cdot}\right)$ is the Legendre symbol, and $\varepsilon_{c}$ equals 1 if $c\equiv 1\bmod 4$ and $i$ if $c\equiv 3\bmod 4$. \\
\\
Hence, the Kloosterman sums in \eqref{charrecall0} vanishes unless we have $\left(\frac{t_j}{p}\right)=1$. From the formula \eqref{kloosterman0} it follows,
\begin{equation}\label{evalkloos}
S(1, \overline{s_jp^{\gamma-u}a_j+t_j}, p^{\gamma})=\sum_{\pm}p^{\gamma/2}\left(\frac{t_j^{1/2}}{p}\right)^{\gamma}e\left(\pm\frac{2\overline{(s_jp^{\gamma-u}a_j+t_j)^{1/2}}}{p^{\gamma}}\right).
\end{equation}Using the fact that $\gamma-u\geq 1$ and expanding $\overline{(s_jp^{\gamma-u}a_j+t_j)^{1/2}}$, we see that our character sum \eqref{charrecall0} can be written as sum of four terms of the form (upto constant factors)
\begin{equation}\label{upf}
\begin{aligned}
\mathfrak{C}=p^{\gamma}\left(\frac{t_1^{1/2}}{p}\right)^{\gamma}\left(\frac{t_2^{1/2}}{p}\right)^{\gamma}\mathop{\sideset{}{^*}\sum\sideset{}{^*}\sum}_{\substack{a_1,a_2 (p^u)\\\lambda_1\overline{a_1}-\lambda_2\overline{a_2}=m (p^u)}}e\left(\frac{\sum_{i\geq 0}p^{i(\gamma-u)}\theta_ia_1^{i}-\sum_{i\geq 0}p^{i(\gamma-u)}\eta_ia_2^i}{p^{\gamma}}\right),
\end{aligned}
\end{equation}where
\begin{equation}\notag
\theta_i=2\binom{-1/2}{i}t_1^{-i-1/2}s_1^{i}\,\,\,\,\text{and}\,\,\,\,\,\eta_i=2\binom{-1/2}{i}t_2^{-i-1/2}s_2^{i}.
\end{equation}Here $$\binom{-1/2}{i}=\frac{\left(-\frac{1}{2}\right)\left(-\frac{1}{2}-1\right)\cdots \left(-\frac{1}{2}-i+1\right)}{i!}$$ is the $i^{th}$ binomial coefficient, and in our context $-1/2$ means $-\overline{2} (p^{\gamma})$. Note that this way the numerator of this $i^{th}$ binomial coefficient is divisible by $i!$ and so the expression makes sense modulo $p^{\gamma}$.

Using \eqref{splitting0}, modulo $p^{\gamma}$, the phase function above is
\begin{equation}\notag
\begin{aligned}
\sum_{i\geq 0}p^{i(\gamma-u)}\theta_ia_1^{i}-\sum_{i\geq 0}p^{i(\gamma-u)}\eta_ia_2^i&=\sum_{i\geq 1}p^{i(\gamma-u)+u/2}i\theta_i\beta_1^{i-1}\alpha_1-\sum_{i\geq 1}p^{i(\gamma-u)+u/2}i\eta_i\beta_2^{i-1}\alpha_2\\
&\hspace{3cm}+\sum_{i\geq 0}p^{i(\gamma-u)}\theta_i\beta_1^i-\sum_{i\geq 0}p^{i(\gamma-u)}\eta_i\beta_2^i\\
&=\sum_{i=1,2}i\left(p^{i(\gamma-u)+u/2}i\theta_i\beta_1^{i-1}\alpha_1-p^{i(\gamma-u)+u/2}i\eta_i\beta_2^{i-1}\alpha_2\right)\\
&\hspace{3cm}+\sum_{i\geq 0}p^{i(\gamma-u)}\theta_i\beta_1^i-\sum_{i\geq 0}p^{i(\gamma-u)}\eta_i\beta_2^i.
\end{aligned}
\end{equation}We have truncated the last sum upto $i\leq 2$ since $3(\gamma-u)+ u/2\geq \gamma$ by our assumption. Substituting $\alpha_2$ from \eqref{alpha20}, the right hand side of the last display becomes
\begin{equation}\notag
\begin{aligned}
\alpha_1\sum_{i=1,2}p^{i(\gamma-u)+u/2}i\left(\theta_i-\eta_i\lambda_1\overline{\lambda_2}(\overline{\beta_1}\beta_2)^{i+1}\right)\beta_1^{i-1}&-\sum_{i=1,2}p^{i(\gamma-u)+u/2}i\eta_i\beta_2^{i-1}g(\beta_1)\\
&+\sum_{i\geq 0}p^{i(\gamma-u)}(\theta_i\beta_1^i-\eta_i\beta_2^i).
\end{aligned}
\end{equation}Substituting this expansion into \eqref{upf}, we see that
\begin{equation}\label{linear}
\mathfrak{C}= p^{\gamma}\left(\frac{t_1^{1/2}}{p}\right)^{\gamma}\left(\frac{t_2^{1/2}}{p}\right)^{\gamma}\sideset{}{^*}\sum_{1\leq\beta_1\leq p^{u/2}}e\left(\frac{f(\beta_1)}{p^{\gamma}}\right)\sum_{1\leq\alpha_1\leq p^{u/2}}e\left(\frac{h(\beta_1)\alpha_1}{p^{u/2}}\right),
\end{equation}where
\begin{equation}\notag
f(\beta_1)=-\sum_{i=1,2}p^{i(\gamma-u)+u/2}i\eta_i\beta_2^{i-1}g(\beta_1)+\sum_{i\geq 0}p^{i(\gamma-u)}(\theta_i\beta_1^i-\eta_i\beta_2^i)
\end{equation}and
\begin{equation}\label{hbeta}
h(\beta_1)=\sum_{i=0,1}(i+1)\beta_1^{i}\left(\theta_{i+1}-\eta_{i+1}\lambda_1\overline{\lambda_2}(\overline{\beta_1}\beta_2)^{i+2}\right)p^{i(\gamma-u)}.
\end{equation}Executing the linear $\alpha_1$-sum, it follows 
\begin{equation}\notag
\mathfrak{C}\ll p^{\gamma+u/2}\sideset{}{^*}\sum_{\substack{\beta_1 (p^{u/2})\\ h(\beta_1)=0 (p^{u/2})}}1.
\end{equation}It remains to count the solutions to $h(\beta_1)=0 (p^{u/2})$. 

If $\gamma-u> u/2$, then from the expression \eqref{hbeta} it follows that $h(\beta_1)=0 (p^{u/2})$ implies
\begin{equation}\notag
\theta_1-\eta_1\lambda_1\overline{\lambda_2}(\overline{\beta_1}\beta_2)^{2}=0\,\,({p^{u/2}})\Rightarrow \theta_1-\eta_1\lambda_1\overline{\lambda_2}( \lambda_1\overline{\lambda_2}-\overline{\lambda_2}m\beta_1)^{-2}=0 \,\, ({p^{u/2}}).
\end{equation}Since $(\theta_1\eta_1,p)=1$, the last relation forces $\overline{\theta_1}\eta_1\lambda_1\overline{\lambda_2}$ to be a quadratic residue $\bmod\,\,{p^{u/2}}$ in which case, we get
\begin{equation}\notag
\overline{\lambda_2}m\beta_1=\lambda_1\overline{\lambda_2}\pm (\overline{\theta_1}\eta_1\lambda_1\overline{\lambda_2})^{1/2} \,\, ({p^{u/2}}).
\end{equation}This determines $\beta_1$ modulo $p^{u/2-\min\{u/2,\nu_p(m)\}}$ and hence we have at most $O(p^{\nu_p(m)})$ solutions for $\beta_1 (p^{u/2})$ and the lemma follows. 

So we can assume $\gamma-u\leq u/2$. This forces 
\begin{equation}\label{firstc}
\theta_1-\eta_1\lambda_1\overline{\lambda_2}(\overline{\beta_1}\beta_2)^{2}=0\,\,({p^{\gamma-u}})\Rightarrow \theta_1-\eta_1\lambda_1\overline{\lambda_2}( \lambda_1\overline{\lambda_2}-\overline{\lambda_2}m\beta_1)^{-2}=0\,\, ({p^{\gamma-u}}).
\end{equation}Suppose $\nu_p(m)\geq\gamma-u (\geq u/4)$, then the last congruence becomes
\begin{equation}\notag
\theta_1\lambda_1=\eta_1\lambda_2\,\, ({p^{\gamma-u}}),\,\text{i.e.}\,\,t_1^{-3/2}s_1\lambda_1=t_2^{-3/2}s_2\lambda_2\,\,({p^{\gamma-u}}).
\end{equation}In this we case we use the trivial bound to get
\begin{equation}\notag
\mathfrak{C}\ll  p^{\gamma+u/2}\sideset{}{^*}\sum_{\substack{\beta_1 (p^{u/2})\\ h(\beta_1)=0 (p^{u/2})}}1 \ll p^{\gamma+u}.
\end{equation}
This proves the second part of the lemma. In the case $\nu_p(m)<\gamma-u$, \eqref{firstc} determines $\beta_1$ modulo $p^{\gamma-u-\nu_p(m)}$, and say $c$ is the corresponding solution . Denote $r=\gamma-u-\nu_p(m), h_i(c)=(i+1)(\theta_{i+1}-\eta_{i+1}\lambda_1\overline{\lambda_2}( \lambda_1\overline{\lambda_2}-\overline{\lambda_2}mc)^{-i-2})$, then for $\lambda\in\mathbb{Z}$,
\begin{equation}\notag
h(p^r\lambda+c)=h_0(c)-\eta_0\lambda_1\overline{\lambda_2}^2( \lambda_1\overline{\lambda_2}-\overline{\lambda_2}mc)^{-3}mp^r\lambda+p^r\lambda h_1(c)p^{\gamma-u}+ch_1(c)p^{\gamma-u} \,\, ({p^{u/2}}).
\end{equation}Dividing the right hand side by $p^{\gamma-u}$, $h(p^r\lambda+c)=0 \,\, ({p^{u/2}})$ boils down to
\begin{equation}\notag
\lambda(h_1(c)p^r-\eta_0\lambda_1\overline{\lambda_2}^2( \lambda_1\overline{\lambda_2}-\overline{\lambda_2}mc)^{-3}(mp^r/p^{\gamma-u}))+h_0(c)/p^{\gamma-u}+ch_1(c)=0 \,\, ({p^{u/2-(\gamma-u)}}).
\end{equation}Note that the coefficient attached to $\lambda$ is coprime to $p$ and consequently $\lambda$ is determined modulo $p^{u/2-(\gamma-u)}$. Combining, it follows that $\beta_1$ is determined modulo $p^{u/2-\nu_p(m)}$ and therefore
\begin{equation}\notag
\mathfrak{C}\ll p^{\gamma+u/2}\sideset{}{^*}\sum_{\substack{\beta (p^{u/2})\\ h(\beta_1)=0 (p^{u/2})}}1\ll p^{\gamma+u/2}p^{\nu_p(m)},
\end{equation}which is the first part of the lemma. 

This completes the proof of the lemma when $u$ is even. When $u$ is odd, in \eqref{aidecomp0} we decompose the $a_j's$  as $p^{(u+1)/2}\alpha_j+\beta_j, 1\leq\alpha_j\leq p^{(u-1)/2}, 1\leq\beta_j\leq p^{(u+1)/2}$ and proceed identically as above. This way we gain a $p^{1/2}$ factor in the $\alpha_1$-sum but loose a factor of $p$ in the $\beta_1$-sum since the linear sum $\alpha_1$-sum in \eqref{linear} will now only determine $h(\beta_1)$ modulo $p^{(u-1)/2}$. This will result in an extra factor of $p^{1/2}$ the final estimate as indicated in the statement of the lemma.
\end{proof}
\begin{lemma}\label{primecharest}
With the notations of \eqref{charrecall0}, we have
\begin{equation}\notag
\mathfrak{C}_{1,1}=\mathop{\sideset{}{^*}\sum\sideset{}{^*}\sum}_{\substack{a_1,a_2 (p)\\\lambda_1\overline{a_1}-\lambda_2\overline{a_2}=m (p)}}S(1, \overline{s_1a_1+t_1}, p)\overline{S}(1, \overline{s_2a_2+t_2}, p)\ll p^{3/2}+p^2\delta_{\left(\substack{m=0 (p)\\t_1=t_2 (p)\\\lambda_1s_1=\lambda_2s_2 (p)}\right)}.
\end{equation}
\end{lemma}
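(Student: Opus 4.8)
\emph{Proof proposal.} The plan is to reduce $\mathfrak{C}_{1,1}$ to a one-variable sum of products of two Kloosterman trace functions and then invoke the Riemann Hypothesis over finite fields. First I would solve the linear congruence $\lambda_1\overline{a_1}-\lambda_2\overline{a_2}\equiv m\ (p)$ for $a_2$: for every invertible $a_1$ with $\lambda_1\overline{a_1}\not\equiv m$ it gives $a_2\equiv \lambda_2 a_1\overline{(\lambda_1-ma_1)}\ (p)$, while the bounded number of excluded values of $a_1$ (namely $a_1\equiv 0$ and, if $m\not\equiv 0$, $a_1\equiv\lambda_1\overline m$) contribute $O(p)$ since each Kloosterman sum is $O(\sqrt p)$ by Weil. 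Hence, up to an error $O(p)$,
\begin{equation}\notag
\mathfrak{C}_{1,1}=\sum_{a\,(p)}S(1,\phi_1(a),p)\,\overline{S(1,\phi_2(a),p)},\qquad
\phi_1(a)=\overline{s_1a+t_1},\quad \phi_2(a)=\frac{\lambda_1-ma}{(s_2\lambda_2-t_2m)a+t_2\lambda_1},
\end{equation}
where $\phi_1,\phi_2$ are genuine elements of $\mathrm{PGL}_2(\mathbb{F}_p)$, their determinants $-s_1$ and $-\lambda_1\lambda_2 s_2$ being units.

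Next I would recognise $a\mapsto -S(1,\phi_j(a),p)$ as the trace function of $\mathcal{G}_j:=\phi_j^{\ast}\mathrm{Kl}_2$, where $\mathrm{Kl}_2$ is the Kloosterman sheaf on $\mathbb{G}_m/\mathbb{F}_p$: lisse of rank $2$, pointwise pure of weight $1$, geometrically irreducible, tamely ramified (unipotent pseudo-reflection) at $0$ and totally wild with Swan conductor $1$ at $\infty$. Pulling back by the Möbius map $\phi_j$ preserves geometric irreducibility and purity and keeps the conductor $O(1)$. Thus $\mathfrak{C}_{1,1}=\sum_a t_{\mathcal{G}_1\otimes\mathcal{G}_2^{\vee}}(a)+O(p)$, and since $\mathrm{Kl}_2$ is self-dual up to a Tate twist, $\mathcal{G}_1\otimes\mathcal{G}_2^{\vee}$ is a middle-extension sheaf, pure of weight $0$ (after a harmless twist), of conductor $O(1)$. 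If $\mathcal{G}_1\not\cong_{\mathrm{geom}}\mathcal{G}_2$, then — both being geometrically irreducible — $\mathcal{G}_1\otimes\mathcal{G}_2^{\vee}$ has no geometrically trivial constituent, so Deligne's bound gives $\sum_a t_{\mathcal{G}_1\otimes\mathcal{G}_2^{\vee}}(a)\ll p^{1/2}$; restoring the weight-$1$ normalisation of the two Kloosterman sums introduces the extra factor $p$, whence $\mathfrak{C}_{1,1}\ll p^{3/2}$. This is exactly the correlation-sum estimate of Fouvry–Kowalski–Michel, so in practice I would simply quote \cite{fkm2} (or \cite{df}). If instead $\mathcal{G}_1\cong_{\mathrm{geom}}\mathcal{G}_2$, one falls back on the trivial bound $|S(1,\cdot,p)|\le 2\sqrt p$ term by term, obtaining $\mathfrak{C}_{1,1}\ll p^{2}$.

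It remains to decide when $\mathcal{G}_1\cong_{\mathrm{geom}}\mathcal{G}_2$, i.e. when $g:=\phi_2\circ\phi_1^{-1}$ lies in the geometric automorphism group of $\mathrm{Kl}_2$ inside $\mathrm{PGL}_2$. Since the singularities $0$ (tame) and $\infty$ (wild) of $\mathrm{Kl}_2$ are of different type, $g$ must fix each of them, so $g$ is a multiplicative translation $x\mapsto cx$; and because $\mathrm{Kl}_2$ is not geometrically isomorphic to any nontrivial multiplicative translate of itself — a standard fact following from Katz's analysis of the local monodromy of Kloosterman sheaves at $\infty$ — one gets $c=1$, that is $\phi_1=\phi_2$ in $\mathrm{PGL}_2(\mathbb{F}_p)$. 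Comparing the two Möbius transformations coefficientwise: the vanishing of the $a^2$-coefficient after clearing denominators forces $m\equiv 0$, and then the $a^1$- and $a^0$-coefficients force $\lambda_1 s_1\equiv\lambda_2 s_2$ and $t_1\equiv t_2\ (p)$; this is precisely the degeneracy condition in the statement. I expect the main obstacle to be this last, algebro-geometric, input — pinning down the automorphism group of $\mathrm{Kl}_2$ and keeping track of conductors through the Möbius pullbacks — but since both are available in \cite{df} and \cite{fkm2}, for the write-up I would deduce the lemma directly from the correlation-sum results proved there.
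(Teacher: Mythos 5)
Your proposal is correct and takes essentially the same route as the paper: both reduce $\mathfrak{C}_{1,1}$ to a one-variable correlation sum $\sum_a S(1,a;p)\overline{S}(1,\gamma(a);p)$ with $\gamma=\delta_2\delta_3\delta_1^{-1}\in\mathrm{PGL}_2(\mathbb{F}_p)$ — the paper organizing the reduction by explicit matrix composition, you by solving the linear congruence for $a_2$ — and then cite Dabrowski--Fisher (equivalently Fouvry--Kowalski--Michel), with the degenerate $p^2$ term arising exactly when $\gamma=\mathrm{id}$ in $\mathrm{PGL}_2$, which both of you unpack into $m\equiv 0$, $t_1\equiv t_2$, $\lambda_1 s_1\equiv \lambda_2 s_2 \pmod p$. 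The extra discussion of $\mathrm{Kl}_2$ as a sheaf and its $\mathrm{PGL}_2$-automorphism group is a conceptual gloss on the input the paper simply quotes as \cite[Prop.\ 3.3--3.4]{df}, not a genuinely different argument.
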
	
\begin{proof}
Consider the linear transformations
\begin{equation}\notag
\delta_1=\begin{pmatrix}
0 & 1 \\
s_1 & t_1
\end{pmatrix},\,
\delta_2=\begin{pmatrix}
0 & 1 \\
s_2 & t_2
\end{pmatrix}\,\,\text{and}\,\,
\delta_3=\begin{pmatrix}
\lambda_2 & 0\\
-m & \lambda_1
\end{pmatrix}.
\end{equation}Then we can recast $\mathfrak{C}_{1,1}$ as
\begin{equation}\notag
\mathfrak{C}_{1,1}=\sideset{}{^*}\sum_{a_1(p)}S(1, \delta_1(a_1), p)\overline{S}(1, \delta_2\delta_3(a_1),p).
\end{equation}Note that $\det (\delta_i)\neq 0\,\, (p)$. Hence
\begin{equation}\label{charreducf0}
\mathfrak{C}_{1,1}=\sideset{}{^*}\sum_{a_1(p)}S(1, a_1, p)\overline{S}(1, \delta_2\delta_3\delta_1^{-1}(a_1),p).
\end{equation}We are in position to apply the estimates from \cite{df}. Their Proposition 3.3 and Proposition 3.4 amounts to the following. Given $\delta=\begin{pmatrix}a & b\\ c & d\end{pmatrix}$ such that $ad-bc\neq 0\,\, (p)$, then
\begin{equation}\label{dfest}
\sideset{}{^*}\sum_{\alpha(p)}S(1,\alpha; p)\overline{S}(1, \gamma(\alpha); p)\ll p^{3/2}+p^2\delta_{(a-d=b=c=0 (p))}.
\end{equation} In our situation \eqref{charreducf0} we have
 \begin{equation}\notag
\delta_2\delta_3\delta_1^{-1}=s_1^{-1}\begin{pmatrix}
mt_1+\lambda_1s_1 & -m\\
s_1t_2\lambda_1-t_1(s_2\lambda_2-t_2m) & s_2\lambda_2-t_2m
\end{pmatrix}.
 \end{equation}The relation $a-d=b=c=0\,\, (p)$ from the \eqref{dfest} translates into $m=0\,\, (p), \lambda_1s_1=\lambda_2s_2\,\, (p)$ and $t_1=t_2\,\, (p)$. The lemma follows.
\end{proof}

\section{Proof of Theorem \ref{prime}}
Here $q$ is a square-free number and $(q,b)=1$. We are interested in
\begin{equation}\label{S}
S=\sum_{n\in\mathscr{N}}\sum_{m\geq 1}\alpha_n\lambda(m)\tilde{\text{Kl}}_3(mnb,q)V(m/M).
\end{equation}Without loss of generality, we can assume the sum over $n$ above is restricted to $(n,q)=1$, for if $(n,q)=d$, the hyper-Kloosterman sum degenerates to
\begin{equation}\notag
\tilde{\text{Kl}}_3(mnb,q)=\frac{d}{q}\cdot \tilde{\text{Kl}}_3(m(n/d)b\overline{d},q/d),
\end{equation}with which one arrives at a sum similar to \eqref{S} with a smaller modulus $q/d$ and a smaller $n$-sum with length $N/d$. Hence it is enough to consider
\begin{equation}\notag
S=\sideset{}{^*}\sum_{n\in\mathscr{N}}\sum_{m\geq 1}\alpha_n\lambda(m)\tilde{\text{Kl}}_3(mnb,q)V(m/M),
\end{equation}where the `$*$' over the $n$-sum denotes $(n,q)=1$. For simplicity we set $$K(m)=\tilde{\text{Kl}}_3(mb,q).$$ 

We begin by separating the coefficients $\lambda(m)$ and $K(mn)$ using the delta symbol. Due to structural reasons, the sizes of the moduli appearing in the delta expansion play no essential role in our approach and only act as a set of auxiliary variables. Hence we do not require any non-trivial delta symbol expansion and simply use the additive characters with large moduli. This simplifies many of the forthcoming calculations. This is not a new observation and was previously exploited in \cite{alhs} in the context of the subconvexity problem for $GL(2)$. 

Next, we note that a direct application of the delta symbol fails to beat the trivial bound at a certain diagonal contribution. To overcome this, we consider an amplified version of $S$ that introduces more harmonics into the analysis. 
Let $L\geq 1$, which will be chosen later, and  $\mathscr{L}$ be the set of primes in $[L,2L]$ co-prime to $q$. Note that
\begin{equation}\label{pntgl2}
\sum_{\ell \in \mathscr{L}}|\lambda(\ell)|^2=\sum_{\substack{\ell \sim L, \ell \,\,\,\text{prime}\\ (\ell, q)=1}}|\lambda(\ell)|^2=\sum_{\substack{\ell \sim L \\\ell \,\,\,\text{prime} }}|\lambda(\ell)|^2-\sum_{\substack{p|q\\p\sim L}}|\lambda(p)|^2\sim L+O(q^{\epsilon}L^{14/64}),
\end{equation}using the $GL(2)$ prime number theorem and the Kim-Sarnak bound for individual $GL(2)$ coefficients. Hence using the Hecke relation
\begin{equation}\notag
\lambda(\ell)\lambda(m)=\lambda(m\ell)+\lambda(m/\ell)\delta_{\ell |m},
\end{equation}and the asymptotic \eqref{pntgl2} we see that
\begin{equation}\label{amp}
|S|\ll |\tilde{S}|+O(MN/L),
\end{equation}where
\begin{equation}\notag
\tilde{S}=\frac{1}{L}\sum_{\substack{\ell\in\mathscr{L}\\}}\overline{\lambda(\ell)}\sideset{}{^*}\sum_{n\in\mathscr{N}}\sum_{m\geq 1}\alpha_n\lambda(m\ell)K(mn)V(m/M).
\end{equation}We have used the Ramanujan bound on average $\sum_{n\leq x}|\lambda(n)|^2\ll x$ and the well known Deligne's estimate $K(m)\ll 1$ for the last assertion. The rest of this section is devoted to the estimation of $\tilde{S}$.  Let $\mathscr{C}$ be the set primes in $[C,2C]$, with $C$ such that
\begin{equation}\notag
qC>100ML.
\end{equation}Since there is no restriction on the upper bound for $C$, a suitable large $C$ will ensure that $(c,q\ell)=1$ for all $ c\in\mathscr{C},\ell \in \mathscr{L}$. Due to the above inequality we can write $\tilde{S}$ as 
\begin{equation}\notag
\begin{aligned}
\tilde{S}=\frac{1}{CL}\sum_{\ell\in\mathscr{L}}\overline{\lambda(\ell)}\sum_{c\in\mathscr{C}}\sideset{}{^*}\sum_{n\in\mathscr{N}}\alpha_n\mathop{\sum\sum}_{\substack{m_1,m_2\geq 1\\qc|m_1-m_2\ell}}\lambda(m_1) K(m_2n)V(m_1/(M\ell))&V_1(m_2/M)\\
&\times e(q^{\epsilon}(m_1-m_2\ell)/ML)
\end{aligned},
\end{equation}where $V_1$ is another smooth function compactly supported in $\mathbb{R}_{>0}$ such that $V_1(x)=1, x\in \text{supp}(V)$. The artificial twist by $e(p^{\epsilon}(m_1-m_2\ell)/ML)$ allows us keep the length of the dual sums in their generic range. This turns out to be crucial in certain counting arguments of the paper, especially when $M\gg q$. The additional restriction modulo $p$ in $pc|(m_1-m_2\ell)$ acts as a conductor lowering mechanism. Detecting the congruence condition using additive characters we obtain
\begin{equation}\notag
\begin{aligned}
\tilde{S}=\frac{1}{qCL}\sideset{}{^*}\sum_{n\in\mathscr{N}}\alpha_n\sum_{\ell\in\mathscr{L}}\overline{\lambda(\ell)}\sum_{c\in\mathscr{C}}\frac{1}{c}\sum_{a(qc)}\mathop{\sum\sum}_{\substack{m_1,m_2\geq 1}}&\lambda(m_1)K(m_2n)e(a(m_1-m_2\ell)/qc)\\
&\times V(m_1/(M\ell))V_1(m_2/M)e(p^{\epsilon}(m_1-m_2\ell)/ML).
\end{aligned}
\end{equation}Since $(c,q)=1$, we can split the above sum as
\begin{equation}\label{ssplit}
\tilde{S}=\sum_{d|q}S(d)+\mathscr{S}
\end{equation}where
\begin{equation}\label{S_d}
\begin{aligned}
S(d)=\frac{1}{qCL}\sideset{}{^*}\sum_{n\in\mathscr{N}}\alpha_n\sum_{\ell\in\mathscr{L}}\overline{\lambda(\ell)}\sum_{c\in\mathscr{C}}\frac{1}{c}\sideset{}{^*}\sum_{a(dc)}\mathop{\sum\sum}_{\substack{m_1,m_2\geq 1}}&\lambda(m_1)K(m_2n)e(a(m_1-m_2\ell)/dc)\\
&V(m_1/(M\ell))V_1(m_2/M)e(q^{\epsilon}(m_1-m_2\ell)/ML),
\end{aligned}
\end{equation}and
\begin{equation}\notag
\begin{aligned}
\mathscr{S}=\frac{1}{qCL}\sideset{}{^*}\sum_{n\in\mathscr{N}}\alpha_n\sum_{\ell\in\mathscr{L}}\overline{\lambda(\ell)}\sum_{c\in\mathscr{C}}\frac{1}{c}\sum_{a(q)}\mathop{\sum\sum}_{\substack{m_1,m_2\geq 1}}&\lambda(m_1)K(m_2n)e(a(m_1-m_2\ell)/q)\\
&V(m_1/(M\ell))V_1(m_2/M)e(q^{\epsilon}(m_1-m_2\ell)/ML).
\end{aligned}
\end{equation}A trivial estimation of $\mathscr{S}$ yields 
\begin{equation}\notag
\mathscr{S}\ll M^2N/C.
\end{equation}Hence we can ignore the contribution  $\mathscr{S}$ since $C$ is allowed to be arbitrary large. The rest of the section is devoted to the estimation of $S(d), d|q$.
\subsection{Dualisation}
In the cuspidal case, the Voronoi summation transforms the $m_1$-sum in $S(d)$  into
\begin{equation}\label{gl2dual}
\sum_{m_1\geq 1}\lambda(m_1)e\left(\frac{am_1}{dc}\right)V(m_1/(M\ell))e(q^{\epsilon}m_1/ML)=\frac{ML}{dc}\sum_{\tilde{m_1}\geq 1}\lambda(\tilde{m_1})e\left(\frac{\pm\bar{a}\tilde{m_1}}{dc}\right)I_1^{\pm}(\tilde{m}_1,c),
\end{equation}where $I_1^{\pm}(\tilde{m}_1,c)=(ML)^{-1}H^{\pm}(\tilde{m}_1/d^2c^2)$, $H^{\pm}$ as in Lemma \ref{gl2vor}. Note that in each case, $I_1^{\pm}(\tilde{m}_1,c)$ will be roughly of the form
\begin{equation}\notag
I_1^{\pm}(\tilde{m}_1,c)\approx \text{(constant factors)}\,\,\int_{\mathbb{R}}V(y)e(q^{\epsilon}y)K\left(\frac{\sqrt{ML\tilde{m}_1y}}{dc}\right)\,dy,
\end{equation}where $K(\cdots)$ is one of the Bessel functions appearing in Lemma \ref{gl2vor}. Since the order of these Bessel functions are fixed for us, $K(x)$ will oscillate like $e(x)$ (see \cite{watson}, p. 206). Hence by  repeated integration by parts, we can conclude that $I_1^{\pm}(\tilde{m}_1,c)$ is negligibly small unless
\begin{equation}\label{truc}
\tilde{m}_1\asymp q^{\epsilon}d^{2}c^2/ML,
\end{equation}in which case the $j$-th derivative is trivially bounded by
\begin{equation}\label{der1}
|\tilde{m}_1|^j\frac{\partial^j I_1^{\pm}(\tilde{m}_1,c)}{\partial \tilde{m_1}^j}\ll_{j,\epsilon} q^{j\epsilon}.
\end{equation} The Poisson summation transforms the $m_2$-sum in \eqref{S_d} into
\begin{equation}\notag
\begin{aligned}
&\sum_{m_2\sim M}K(m_2n)e\left(\frac{-am_2\ell}{dc}\right)V_1(m_2/M)e(-q^{\epsilon}m_2\ell/ML)\\
&= \frac{M}{qc}\sum_{\alpha(qc)}K(\alpha n)e\left(\frac{-a\alpha\ell}{dc}\right)\sum_{\tilde{m_2}\in\mathbb{Z}}e\left(\frac{-\tilde{m_2}\alpha}{qc}\right)I_2(\tilde{m_2},c),
\end{aligned}
\end{equation}where
\begin{equation}\notag
I_2(\tilde{m_2},c)=\int_{\mathbb{R}}V_1(x)e\left(\frac{q^{\epsilon}\ell}{L}-\frac{M\tilde{m_2}x}{qc}\right)dx.
\end{equation}Again, from repeated integration by parts it follows that $I_2(\tilde{m_2},c)$ is negligible unless
\begin{equation}\notag
|\tilde{m}_2|\asymp q^{1+\epsilon}c/M
\end{equation}and the $j$-th derivative is bounded by
\begin{equation}\label{der2}
|\tilde{m}_2|^j\frac{\partial^j I_2(\tilde{m_2},c)}{\partial \tilde{m_2}^j}\ll_{j,\epsilon} q^{j\epsilon}.
\end{equation}

Combining the above two transformations, we see that $S(d)$ can be replaced by
\begin{equation}\label{B}
S(d)=\frac{M^2}{q^2dC}\sideset{}{^*}\sum_{n\in\mathscr{N}}\alpha_n\sum_{\ell\in\mathscr{L}}\overline{\lambda(\ell)}\sum_{c\in\mathscr{C}}\frac{1}{c^3}\sum_{\tilde{m_1}\asymp d^2c^2/ML}\sum_{|\tilde{m_2}|\asymp qc/M}\lambda(\tilde{m_1})\mathfrak{C}(\cdots)J(\tilde{m_1},\tilde{m_2},c),
\end{equation}where
\begin{equation}\notag
J(\tilde{m_1},\tilde{m_2},c)=I_1^{\pm}(\tilde{m}_1,c)I_2(\tilde{m_2},c),
\end{equation}and
\begin{equation}\label{charprimem}
\mathfrak{C}(\cdots)=\sideset{}{^*}\sum_{a (dc)}\sum_{\alpha (qc)}K(\alpha n)e\left(-\frac{a\alpha\ell}{dc}-\frac{\tilde{m_2}\alpha}{qc}\pm\frac{\overline{a}\tilde{m_1}}{dc}\right).
\end{equation}Note that from \eqref{der1} and \eqref{der2} we have
\begin{equation}\label{finalder}
|\tilde{m}_1|^{j_1}|\tilde{m}_2|^{j_2}\frac{\partial^{j_1}\partial^{j_2}J(\tilde{m_1},\tilde{m_2},c)}{\partial \tilde{m_1}^{j_1}\partial \tilde{m_2}^{j_2}}\ll_{j_1,j_2}q^{(j_1+j_2)\epsilon}.
\end{equation}
\begin{remark}\label{cuspi}In the case of Eisenstein coefficients, there is an additional `0-th' term in the right hand side of \eqref{gl2dual} which we now briefly show has a small contribution towards $B$. From Lemma \ref{gl2vor}, the main term will roughly be of the form
\begin{equation}\notag
\frac{ML}{dc} I(c),
\end{equation}where $I(c)$ is a integral transform with $I(c)\ll 1$. Hence if $S_0(d)$ denotes the contribution of the main term towards $S(d)$, then
\begin{equation}\label{b0}
S(d)\ll \frac{M^2}{q^2dC}\sideset{}{^*}\sum_{n\in\mathscr{N}}\alpha_n\sum_{\ell\in\mathscr{L}}\overline{\lambda(\ell)}\sum_{c\in\mathscr{C}}\frac{1}{c^3}\sum_{\tilde{m_2}\ll qc/M}\tilde{\mathfrak{C}}(\cdots)I(c)I_2(\tilde{m_2},c),
\end{equation}where $\tilde{\mathfrak{C}}(\cdots)$ is the simpler character sum
\begin{equation}\notag
\tilde{\mathfrak{C}}(\cdots)=\sideset{}{^*}\sum_{a (dc)}\sum_{\alpha (qc)}K(\alpha n)e\left(-\frac{a\alpha\ell}{dc}-\frac{\tilde{m_2}\alpha}{qc}\right).
\end{equation}It can be easily shown that $\tilde{\mathfrak{C}}(\cdots)\ll qc$. Trivially estimating \eqref{b0} we therefore obtain
\begin{equation}\notag
S_0(d)\ll ML/(dC).
\end{equation}From the freedom of choosing $C$, it follows that $S_0(d)$ will have a negligible contribution.
\end{remark}Let us come back to the generic case \eqref{B}. Dividing the $\tilde{m_1}$-sum into dyadic blocks $\tilde{m_1}\sim M_1\asymp d^2C^2/ML$ with the localising factors $W(\tilde{m_1}/M_1)$, we get
\begin{equation}\label{dyadicB}
S(d)\ll \sup_{\substack{M_1\asymp d^2C^2/ML\\}}S(d,M_1),
\end{equation}where
\begin{equation}\label{Breduc}
S(d,M_1)=\frac{M^2}{q^2dC}\sideset{}{^*}\sum_{n\in\mathscr{N}}\alpha_n\sum_{\ell\in\mathscr{L}}\overline{\lambda(\ell)}\sum_{c\in\mathscr{C}}\frac{1}{c^3}\sum_{\tilde{m_1}\in\mathbb{Z}}W(\tilde{m_1}/M_1)\sum_{|\tilde{m_2}|\asymp qC/M}\lambda(\tilde{m_1})\mathfrak{C}(\cdots)J(\tilde{m_1},\tilde{m_2},c).
\end{equation}
\subsection{Simplifying the character sum}Splitting the $\alpha \,\,({qc})$ sum \eqref{charprimem} using the Chinese remainder theorem and executing the modulo $c$ part, we obtain the congruence relation
\begin{equation}\notag
a=-\tilde{m_2}\overline{\ell}\overline{(q/d)}\,\, ({c}),
\end{equation}and we are left with
\begin{equation}\notag
\mathfrak{C}(\cdots)=c\cdot e\left(\frac{\pm q\overline{d}^2\ell\overline{\tilde{m_2}}\tilde{m_1}}{c}\right)\sideset{}{^*}\sum_{a (d)}\sum_{\alpha (q)}K(c\alpha n)e\left(-\frac{a\alpha\ell}{d}-\frac{\tilde{m_2}\alpha}{q}\pm\frac{\overline{c}\overline{a}\tilde{m_1}}{d}\right).
\end{equation}Substituting the definition
\begin{equation}\notag
K(c\alpha n)=\frac{1}{q}\sideset{}{^*}\sum_{\beta (q)}e\left(\frac{\beta}{q}\right)\sideset{}{^*}\sum_{l(q)}e\left(\frac{lc\alpha nb+\overline{l}\overline{\beta}}{q}\right)
\end{equation}and executing the $\alpha\,\,(q)$ sum we obtain
\begin{equation}\notag
l=\overline{cnb}(\tilde{m_2}+a(q/d)\ell)\,\, ({q}).
\end{equation}Substituting we get
\begin{equation}\label{tkloos0}
\mathfrak{C}(\cdots)=c\cdot e\left(\frac{\pm q\overline{d}^2\ell\overline{\tilde{m_2}}\tilde{m_1}}{c}\right)\sideset{}{^*}\sum_{a (d)}S(1,cnb\overline{(\tilde{m_2}+a(q/d)\ell)}; q)e\left(\frac{\overline{c}\overline{a}\tilde{m_1}}{d}\right).
\end{equation}Observe that $\mathfrak{C}(\cdots)$ is additive w.r.t. $\tilde{m_1}\,\, (c)$.
\subsection{Cauchy-Schwarz and Poisson}Applying Cauchy-Schwarz inequality to  \eqref{Breduc} keeping the $\tilde{m}_1$ sum outside and everything else inside the absolute value square, we see that 
\begin{equation}\label{cauchys}
S(d,M_1)\ll \frac{M^2}{q^2dC^3}\cdot dC/(ML)^{1/2}\cdot  \Omega^{1/2},
\end{equation}where
\begin{equation}\notag
\Omega=\sum_{\tilde{m_1}\in\mathbb{Z}}W(\tilde{m_1}/M_1)\left|\sideset{}{^*}\sum_{n\in \mathscr{N}}\alpha_n\sum_{\ell\in\mathscr{L}}\overline{\lambda(\ell)}\sum_{c\in\mathscr{C}}\sum_{|\tilde{m_2}|\asymp dC/M}e\left(\frac{\pm q\overline{d}^2\ell\overline{\tilde{m_2}}\tilde{m_1}}{c}\right)\mathfrak{C}_1(n,\tilde{m_1},\tilde{m_2},\ell,c)J(\tilde{m_1},\tilde{m_2},c)\right|^2,
\end{equation}where $\mathfrak{C}_1(\cdots)$ is $\mathfrak{C}(\cdots)$ in \eqref{tkloos0} without the first factor $c$. Opening the absolute value square we obtain
\begin{equation}\label{absopen}
\begin{aligned}
\Omega=&\sum_{\substack{\ell_1,\ell_2\sim L\\}}\lambda(\ell_1)\overline{\lambda(\ell_2)}\sideset{}{^*}\sum_{n_1,n_2\in \mathscr{N}}\alpha_{n_1}\overline{\alpha_{n_2}}\sum_{c_1,c_2\in\mathscr{C}}\,\,\sum_{|\tilde{m_2}|,|\tilde{m_3}|\asymp dC/M}\\
&\sum_{\tilde{m_1}\in\mathbb{Z}} e\left(\frac{q\overline{d}^2\ell_1\overline{\tilde{m_2}}\tilde{m_1}}{c_1}-\frac{q\overline{d}^2\ell_2\overline{\tilde{m_3}}\tilde{m_1}}{c_2}\right)\mathfrak{C}_1(n_1,\tilde{m_1},\tilde{m_2},\ell_1, c_1)\overline{\mathfrak{C}_1(n_2,\tilde{m_1},\tilde{m_3},\ell_2, c_2)}\\
&\hspace{7cm}\times J(\tilde{m_1},\tilde{m_2},c_1)\overline{J(\tilde{m_1},\tilde{m_3},c_2)}W(\tilde{m_1}/M_1).
\end{aligned}
\end{equation}A final application of the Poisson summation formula transforms the $\tilde{m_1}$-sum above into
\begin{equation}\label{m_1trans}
\begin{aligned}
&\frac{M_1}{dc_1c_2}\sum_{k(pc_1c_2)}e\left(\frac{q\overline{d}^2\ell_1\overline{\tilde{m_2}}k}{c_1}-\frac{q\overline{d}^2\ell_2\overline{\tilde{m_3}}k}{c_2}\right)\mathfrak{C}_1(n_1,k,\tilde{m_2},\ell_1, c_1)\overline{\mathfrak{C}_1(n_2,k,\tilde{m_3},\ell_2, c_2)}\\
&\hspace{7cm}\times\sum_{\tilde{m_4}\in\mathbb{Z}}e\left(\frac{-\tilde{m_4}k}{dc_1c_2}\right)\mathscr{I}(\tilde{m}_2,\tilde{m}_3,\tilde{m}_4,c_1,c_2)\\
&=\frac{M_1}{d}\sum_{\tilde{m_4}\in\mathbb{Z}}\mathfrak{C}_2(\cdots)\cdot\mathscr{I}(\tilde{m}_2,\tilde{m}_3,\tilde{m}_4,c_1,c_2)\cdot\delta_{c_2\ell_1\overline{\tilde{m_2}}-c_1\ell_2\overline{\tilde{m_3}}=\overline{(q/d)}\tilde{m_4} (c_1c_2)},
\end{aligned}
\end{equation}where
\begin{equation}\label{fint}
\mathscr{I}(\tilde{m}_2,\tilde{m}_3,\tilde{m}_4,c_1,c_2)=\int_{\mathbb{R}}W(x)J(M_1x,\tilde{m_2},c_1)\overline{J(M_1x,\tilde{m_3},c_2)}e(-M_1\tilde{m_4}x/dc_1c_2)dx
\end{equation}and
\begin{equation}\notag
\mathfrak{C}_2(\cdots)=\sum_{k(d)}\mathfrak{C}_1(n_1,k,\tilde{m_2},\ell_1, c_1)\overline{\mathfrak{C}_1(n_2,k,\tilde{m_3},\ell_2, c_2)}e\left(\frac{-\overline{c_1c_2}\tilde{m_4}k}{d}\right).
\end{equation}Equation \eqref{finalder} and repeated integration by parts in \eqref{fint}  allows us to truncate
\begin{equation}\notag
|\tilde{m_4}|\ll dC^2/M_1.
\end{equation}
Substituting the transformation \eqref{m_1trans} into \eqref{absopen} , we obtain
\begin{equation}\label{omega}
\begin{aligned}
\Omega=\frac{M_1}{d}\sum_{\substack{\ell_1,\ell_2\sim L\\}}\lambda(\ell_1)\overline{\lambda(\ell_2)}\sideset{}{^*}\sum_{n_1,n_2\in \mathscr{N}}\alpha_{n_1}\overline{\alpha_{n_2}}\sum_{c_1,c_2\in\mathscr{C}}\,\,\mathop{\sum_{\tilde{m_2},\tilde{m_3}\asymp dC/M}\,\sum_{\tilde{m_4}\ll dC^2/M_1}}_{c_2\ell_1\overline{\tilde{m_2}}-c_1\ell_2\overline{\tilde{m_3}}=\overline{(q/d)}\tilde{m_4} (c_1c_2)}\mathfrak{C}_2(\cdots)\cdot\mathscr{I}(\tilde{m}_2,\tilde{m}_3,\tilde{m}_4,c_1,c_2).
\end{aligned}
\end{equation}It remains to estimate the character sum $\mathfrak{C}_2(\cdots)$. Substituting the definition \eqref{tkloos0} and executing the $k\,\,(d)$-sum in $\mathfrak{C}_2(\cdots)$, we obtain
\begin{equation}\label{charfreduc}
\mathfrak{C}_2(\cdots)=d\mathop{\sideset{}{^*}\sum\sideset{}{^*}\sum}_{\substack{a_1, a_2 (d)\\c_2\overline{a_1}-c_1\overline{a_2}=\tilde{m_4} (d)}}S(1,c_1n_1\overline{(\tilde{m_2}+a_1(q/d)\ell_1)}; q)\overline{S}(1,c_2n_2\overline{(\tilde{m_3}+a_2(q/d)\ell_2)}; q).
\end{equation}
\begin{lemma}\label{glue}
Let $\mathfrak{C}_2(\cdots)$ as in \eqref{charfreduc}. Then
\begin{equation}\label{•}
\mathfrak{C}_2(\cdots)\ll qd^{3/2}\sum_{k|d}k^{1/2}\delta_{\left(\substack{\tilde{m_4}=0 (k)\\n_1c_1\tilde{m_3}=n_2c_2\tilde{m_2}(k)\\n_1c_1^2\ell_2=n_2c_2^2\ell_1 (k)}\right)}.
\end{equation}
\end{lemma}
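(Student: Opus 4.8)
The plan is to exploit the square-freeness of $d$ (a divisor of the square-free $q$) together with the twisted multiplicativity of Kloosterman sums to factor $\mathfrak{C}_2(\cdots)$ into local pieces, each of which will turn out to be an instance of the two-variable character sum $\mathfrak{C}_{1,1}$ already estimated in Lemma \ref{primecharest}. Writing $q=de$ with $e=q/d$ and $(d,e)=1$, I would first use $S(1,\beta;q)=S(1,\beta\bar e^{2};d)\,S(1,\beta\bar d^{2};e)$ to split each of the two Kloosterman sums in \eqref{charfreduc} into a $d$-part and an $e$-part. Since $\tilde m_2+a_1(q/d)\ell_1\equiv\tilde m_2$ and $\tilde m_3+a_2(q/d)\ell_2\equiv\tilde m_3$ modulo $e$, the two $e$-parts are independent of $a_1,a_2$, so they come out of the double sum; by Weil's bound their product is $O(e^{1+\epsilon})$, and if $(\tilde m_2\tilde m_3,q/d)\neq1$ the sum is empty and the bound is trivial. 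The remaining double sum over $a_1,a_2\ (d)$ with the coupling $c_2\overline{a_1}-c_1\overline{a_2}=\tilde m_4\ (d)$ then factors, by the Chinese Remainder Theorem for the square-free modulus $d$, as a product over primes $p\mid d$ of local sums $\mathfrak{C}_{2,p}$, each a double sum over $a_1,a_2\ (p)$ with $c_2\overline{a_1}-c_1\overline{a_2}=\tilde m_4\ (p)$ of a product of two Kloosterman sums modulo the prime $p$.

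The next step is to recognise $\mathfrak{C}_{2,p}$ as $\mathfrak{C}_{1,1}$. Writing the $p$-component of $S(1,\beta;q)$ as $S(1,\beta u_p;p)$ for an explicit unit $u_p\ (p)$ and rearranging $c_1 n_1 u_p\overline{(\tilde m_2+a_1 e\ell_1)}=\overline{s_1 a_1+t_1}$ with $s_1=\overline{c_1 n_1 u_p}\,e\ell_1$, $t_1=\overline{c_1 n_1 u_p}\,\tilde m_2$ (and similarly $s_2,t_2$ for the conjugated factor), one obtains $\mathfrak{C}_{2,p}=\mathfrak{C}_{1,1}$ with parameters $\lambda_1=c_2$, $\lambda_2=c_1$, $m=\tilde m_4$ and the $s_j,t_j$ just named; the hypotheses $(s_j,p)=(\lambda_j,p)=1$ hold because $c_1,c_2,n_1,n_2,\ell_1,\ell_2$ are all coprime to $q$. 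Lemma \ref{primecharest} then yields $\mathfrak{C}_{2,p}\ll p^{3/2}+p^2\delta_p$, where, after translating the three conditions $m\equiv0$, $t_1\equiv t_2$, $\lambda_1 s_1\equiv\lambda_2 s_2$ modulo $p$, the factor $\delta_p$ is exactly the indicator of $\tilde m_4\equiv0$, $n_1c_1\tilde m_3\equiv n_2c_2\tilde m_2$, $n_1c_1^{2}\ell_2\equiv n_2c_2^{2}\ell_1$ modulo $p$.

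Finally, assembling the pieces gives $\mathfrak{C}_2(\cdots)\ll d\,e^{1+\epsilon}\prod_{p\mid d}(p^{3/2}+p^2\delta_p)$. Expanding the product over the distinct primes of $d$, the term indexed by $k\mid d$ equals $k^{2}(d/k)^{3/2}\delta_k=d^{3/2}k^{1/2}\delta_k$, where $\delta_k=\prod_{p\mid k}\delta_p$ is, by CRT, the indicator of the three congruences above modulo $k$. Hence $\mathfrak{C}_2(\cdots)\ll d\,e^{1+\epsilon}\,d^{3/2}\sum_{k\mid d}k^{1/2}\delta_k=q\,(q/d)^{\epsilon}d^{3/2}\sum_{k\mid d}k^{1/2}\delta_k$, and absorbing $(q/d)^{\epsilon}$ into the $\ll$-notation gives the claimed bound. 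I expect the only fiddly point to be keeping track of the units through the two twisted-multiplicativity splittings so that the three congruence conditions emerge in precisely the stated shape; beyond that bookkeeping I foresee no substantive obstacle, since the arithmetic heart of the estimate is entirely contained in Lemma \ref{primecharest}.
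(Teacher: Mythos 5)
Your proposal is correct and follows essentially the same route as the paper: split the Kloosterman sums modulo $d$ and $q/d$ via twisted multiplicativity, observe that the $q/d$-parts are independent of $a_1,a_2$ and bound them by Weil, factor the remaining $d$-part over the primes of the square-free $d$ by CRT, apply Lemma \ref{primecharest} at each prime, and recombine (with the $(q/d)^{\epsilon}$ from Weil absorbed into the $\ll$-convention). The parametrization you propose, with $\lambda_1=c_2$, $\lambda_2=c_1$, $m=\tilde m_4$ and the resulting congruences $\tilde m_4\equiv0$, $n_1c_1\tilde m_3\equiv n_2c_2\tilde m_2$, $n_1c_1^{2}\ell_2\equiv n_2c_2^{2}\ell_1\pmod p$, matches the paper's computation (and the lemma statement; the paper's in-text translation of the last congruence has a harmless typo).
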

\begin{proof}
Since $q$ is square-free, we can split the Kloosterman sums modulo $q/d$ and $d$ and get
\begin{equation}\label{char2}
\mathfrak{C}_2(\cdots)=d\cdot S(1,\overline{d}^2c_1n_1b\overline{\tilde{m_2}}; q/d)\overline{S}(1,\overline{d}^2c_2n_2b\overline{\tilde{m_3}}; q/d)\cdot\mathfrak{C}_3,
\end{equation}where
\begin{equation}\notag
\mathfrak{C}_3=\mathop{\sideset{}{^*}\sum\sideset{}{^*}\sum}_{\substack{a_1, a_2 (d)\\c_2\overline{a_1}-c_1\overline{a_2}=\tilde{m_4} (d)}}S(1,\overline{(q/d)}^2c_1n_1b\overline{(\tilde{m_2}+a_1(q/d)\ell_1)}; d)\overline{S}(1,\overline{(q/d)}^2c_2n_2b\overline{(\tilde{m_3}+a_2(q/d)\ell_2)}; d).
\end{equation}Suppose $d=d_1d_2\cdots d_l$, where each $d_i$ is prime. Then $\mathfrak{C}_3$ further factorises as
\begin{equation}\label{char3}
\mathfrak{C}_3=\prod_{i=1}^{l}K_i,
\end{equation}where
\begin{equation}\notag
K_i=\mathop{\sideset{}{^*}\sum\sideset{}{^*}\sum}_{\substack{a_1, a_2 (d_i)\\c_2\overline{a_1}-c_1\overline{a_2}=\tilde{m_4} (d_i)}}S(1,\overline{(q/d_i)}^2c_1n_1b\overline{(\tilde{m_2}+a_1(q/d)\ell_1)}; d_i)\overline{S}(1,\overline{(q/d_i)}^2c_2n_2b\overline{(\tilde{m_3}+a_2(q/d)\ell_2)}; d_i).
\end{equation}We apply the estimates from Lemma \ref{primecharest} with the parameters
\begin{equation}\notag
(s_1,t_1)=((q/d_i)^2(q/d)\overline{c_1n_1b}\ell_1, (q/d_i)^2\overline{c_1n_1b}\tilde{m_2}),\,\,\,(s_2,t_2)=((q/d_i)^2(q/d)\overline{c_2n_2b}\ell_2, (q/d_i)^2\overline{c_2n_2b}\tilde{m_3}),
\end{equation}
\begin{equation}\notag
(\lambda_1,\lambda_2)=(c_2,c_1),
\end{equation}and $m=\tilde{m_4}$. The congruences $m=0\,(p), t_1=t_2\,(p)$ and $\lambda_1s_1=\lambda_2s_2\,(p)$ then translates into $\tilde{m_4}=0\,(d_i), n_1c_1\tilde{m_3}=n_2c_2\tilde{m_2}\,(d_i)$ and $ n_1c_1^2\ell_1=n_2c_2^2\ell_2\,(d_i)$ respectively. Hence Lemma \ref{primecharest} gives
\begin{equation}\notag
K_i\ll d_i^{3/2}\left(1+d_i^{1/2}\delta_{\left(\substack{\tilde{m_4}=0 (d_i)\\n_1c_1\tilde{m_3}=n_2c_2\tilde{m_2}(d_i)\\n_1c_1^2\ell_2=n_2c_2^2\ell_1 (d_i)}\right)}\right).
\end{equation}Plugging in these estimates in \eqref{char3} we obtain
\begin{equation}\notag
\mathfrak{C}_3\ll d^{3/2}\prod_{i=1}^{k}\left(1+d_i^{1/2}\delta_{\left(\substack{\tilde{m_4}=0 (d_i)\\n_1c_1\tilde{m_3}=n_2c_2\tilde{m_2}(d_i)\\n_1c_1^2\ell_2=n_2c_2^2\ell_1 (d_i)}\right)}\right).
\end{equation}Since $d_i$'s are pairwise coprime, the congruences can be clubbed together to yield 
\begin{equation}\notag
\mathfrak{C}_3\ll d^{3/2}\sum_{k|d}k^{1/2}\delta_{\left(\substack{\tilde{m_4}=0 (k)\\n_1c_1\tilde{m_3}=n_2c_2\tilde{m_2}(k)\\n_1c_1^2\ell_2=n_2c_2^2\ell_1 (k)}\right)}.
\end{equation}The lemma follows after plugging the last estimate into \eqref{char2} and using the Weil's bound for the remaining two Kloosterman sums.
\end{proof}
 We proceed to estimate the contribution of the zero and non-zero frequencies in $\Omega$.
\subsubsection{The zero frequency}Assuming $(c_i,\ell_i)=1$, when $\tilde{m_4}=0$, the congruence
\begin{equation}\notag
c_2\ell_1\overline{\tilde{m_2}}-c_1\ell_2\overline{\tilde{m_3}}=\overline{(q/d)}\tilde{m_4} (c_1c_2)
\end{equation} in \eqref{omega} implies 
\begin{equation}\label{zerofreqcong1}
c_1=c_2=c\,\,\text{and}\,\,\,\ell_2\tilde{m_2}=\ell_1\tilde{m_3} (c).
\end{equation}Therefore, from Lemma \ref{glue} we get
\begin{equation}\label{charbd}
\mathfrak{C}_2(\cdots)\ll qd^{3/2}\sum_{k|d}k^{1/2}\delta_{\left(\substack{n_1\tilde{m_3}=n_2\tilde{m_2}(k)\\n_1\ell_2=n_2\ell_1 (k)}\right)}.
\end{equation}in the case of zero frequency. Let $\Omega_0$ denote the contribution of the zero frequency towards $\Omega$ \eqref{omega}.  Then from the above estimate for the character sum we get
\begin{equation}\notag
\Omega_0\ll\frac{M_1}{d}\cdot qd^{3/2}\cdot\sum_{\substack{\ell_1,\ell_2\sim L\\}}|\lambda(\ell_1)\overline{\lambda(\ell_2)}|\sum_{n_1,n_2\in \mathscr{N}}|\alpha_{n_1}\overline{\alpha_{n_2}}|\sum_{c\in\mathscr{C}}\sum_{k|d}k^{1/2}\mathop{\sum\sum}_{\substack{\tilde{m_2},\tilde{m}_3\asymp dC/M\\ \ell_2\tilde{m}_2=\ell_1\tilde{m_3}(c) }} \delta_{\left(\substack{n_1\tilde{m_3}=n_2\tilde{m_2}(k)\\n_1\ell_2=n_2\ell_1 (k)}\right)}.
\end{equation}Given $n_1,n_2,\ell_1,\ell_2$ and $\tilde{m_2}$, $\tilde{m_3}$ is determined modulo $kc$ from the two congruence conditions. Hence the number the $(\tilde{m_2},\tilde{m_3})$ pairs satisfying the congruence is at most $dC/M(1+d/(kM))$. Therefore
\begin{equation}\label{p3}
\Omega_0\ll \frac{M_1}{d}\cdot qd^{3/2}\cdot\sum_{\substack{\ell_1,\ell_2\sim L\\}}|\lambda(\ell_1)\overline{\lambda(\ell_2)}|\sum_{c\in\mathscr{C}}\sum_{k|d}\sum_{\substack{n_1,n_2\in \mathscr{N}\\n_1\ell_1=n_2\ell_2 (k)}}k^{1/2}(dC/M)(1+d/(kM)).
\end{equation}Before proceeding further, we use the inequality $|\lambda(\ell_1)\overline{\lambda(\ell_2)}|\ll |\lambda(\ell_1)|^2+|\lambda(\ell_2)|^2$, and due to symmetry we consider the contribution of first term only. Now given $(n_1,\ell_1)$, there are at most $(1+NL/k)$ many $(n_2,\ell_2)$ pairs satisfying the congruence in \eqref{p3}. Hence, 
\begin{equation}
\begin{aligned}
\Omega_0&\ll \frac{M_1}{d}\cdot qd^{3/2}\cdot\sum_{\substack{\ell_1\sim L\\}}|\lambda(\ell_1)|^2\sum_{c\in\mathscr{C}}\sum_{k|d}\sum_{\substack{n_1\in \mathscr{N}\\}}k^{1/2}(dC/M)(1+d/(kM))(1+NL/k)\\
&\ll \frac{M_1}{d}\cdot qd^{3/2}\cdot LCN(dC/M)\sum_{k|d}k^{1/2}(1+d/(kM))(1+NL/k)\\
&\ll \frac{M_1}{d}\cdot qd^{3/2}\cdot LCN(dC/M)\left(d^{1/2}+(d/M)+NL+(d/M)NL\right).
\end{aligned}
\end{equation}Note that the last three term inside the parenthesis of the last line is dominated by $NL(1+d/M)$. Substituting the upper bound $M_1\ll d^2C^2/ML$ we then obtain
\begin{equation}\label{omegazeo}
\Omega_{0}\ll qd^4NC^4/M^2+qd^{7/2}N^2LC^4(1+d/M)/M^2.
\end{equation}
\subsubsection{Non-zero frequencies}Let $\Omega_{\neq 0}$ denote the contribution of the non-zero frequencies $\tilde{m_4}\neq 0$ towards $\Omega$ \eqref{omega}. We use the estimate
\begin{equation}\notag
\mathfrak{C}_2(\cdots)\ll qd^{3/2}\sum_{k|d}k^{1/2}\delta_{\left(\substack{\tilde{m_4}=0 (k)}\right)}
\end{equation}from Lemma \ref{glue} in this case. With this bound in \eqref{omega}, we get
\begin{equation}\notag
\Omega_{\neq 0}\ll \frac{M_1}{d}\cdot qd^{3/2}\cdot\sum_{\substack{\ell_1,\ell_2\sim L\\}}|\lambda(\ell_1)\overline{\lambda(\ell_2)}|\sum_{n_1,n_2\in \mathscr{N}}\sum_{c_1,c_2\in\mathscr{C}}\sum_{k|d}\,\,\mathop{\sum_{\tilde{m_2},\tilde{m_3}\asymp dC/M}\,\sum_{\tilde{m_4}\ll dC^2/M_1}}_{c_2\ell_1\overline{\tilde{m_2}}-c_1\ell_2\overline{\tilde{m_3}}=\overline{(q/d)}\tilde{m_4} (c_1c_2)}k^{1/2}\delta_{k|\tilde{m_4}}.
\end{equation}We write $\tilde{m_4}=k\lambda, \lambda\ll dC^2/(M_1k), \lambda\neq 0$ and rewrite the above as
\begin{equation}\label{A0}
\Omega_{\neq 0}\ll \frac{M_1}{d}\cdot qd^{3/2}\cdot\sum_{\substack{\ell_1,\ell_2\sim L\\}}|\lambda(\ell_1)\overline{\lambda(\ell_2)}|\sum_{n_1,n_2\in \mathscr{N}}\sum_{c_1,c_2\in\mathscr{C}}\sum_{k|d}k^{1/2}\sum_{\lambda\ll dC^2/(M_1k)}\mathop{\sum_{\tilde{m_2}\asymp dC/M}\,\sum_{\tilde{m_3}\asymp dC/M}}_{c_2\ell_1\overline{\tilde{m_2}}-c_1\ell_2\overline{\tilde{m_3}}=\overline{(q/d)}k\lambda(c_1c_2)}1.
\end{equation}The number of pairs $(\tilde{m_2},\tilde{m_3})$ satisfying the congruence modulo $c_1c_2$ in \eqref{A0} is at most $(c_2\ell_1, k\lambda)(c_1\ell_2,k\lambda)(1+d/M)^2$. Recall that $(c_i,q)=(\ell_j,q)=1$ and consequently $(c_il_j, k)=1$. Hence
\begin{equation}\notag
\begin{aligned}
\Omega_{\neq 0}&\ll \frac{M_1}{d}\cdot qd^{3/2}(1+d/M)^2\cdot\sum_{\substack{\ell_1,\ell_2\sim L\\}}|\lambda(\ell_1)\overline{\lambda(\ell_2)}|\sum_{n_1,n_2\in \mathscr{N}}\sum_{k|d}k^{1/2}\\
&\hspace{6cm}\sum_{\lambda\ll dC^2/(M_1k)}(\ell_1,\lambda)(\ell_2,\lambda)\sum_{c_1,c_2\in\mathscr{C}}(c_2, \lambda)(c_1,\lambda).
\end{aligned}
\end{equation}We next execute the $(c_1,c_2)$-sum with the bound $C^2$, the $\lambda$-sum with bound the $(\ell_1,\ell_2)(dC^2/M_1k)$, and then the $(n_1,n_2)$-sum with the bound $N^2$. We arrive at
\begin{equation}\label{omeganzero}
\begin{aligned}
\Omega_{\neq 0}&\ll \frac{M_1}{d}\cdot qd^{3/2}(1+d/M)^2N^2C^2(dC^2/M_1)\cdot\sum_{\substack{\ell_1,\ell_2\sim L\\}}|\lambda(\ell_1)\overline{\lambda(\ell_2)}|(\ell_1,\ell_2)\\
&\ll \frac{M_1}{d}\cdot qd^{3/2}(1+d/M)^2N^2C^2(dC^2/M_1)\left(L\sum_{\ell\sim L}|\lambda(\ell)|^2+\sum_{\substack{\ell_1,\ell_2\sim L\\}}|\overline{\lambda(\ell_1)}\lambda(\ell_2)|\right)\\
&\ll qd^{3/2}C^4N^2L^2(1+d/M)^2.
\end{aligned}
\end{equation}

From \eqref{omegazeo} and \eqref{omeganzero} we get
\begin{equation}\label{omegafinal}
\begin{aligned}
\Omega=\Omega_0+\Omega_{\neq 0}&\ll qd^4NC^4/M^2+qd^{3/2}C^4N^2L^2(1+d/M)^2+qd^{7/2}N^2LC^4(1+d/M)/M^2\\
&=qd^4NC^4/M^2+qd^{7/2}C^4N^2L^2(1+M/d)^2/M^2+qd^{9/2}N^2LC^4(1+M/d)/M^3
\end{aligned}
\end{equation}

\subsection{Optimal choice for $L$} Substituting the last estimate into \eqref{cauchys} we arrive at
\begin{equation}\notag
S(d,M_1)\ll \frac{d^2M^{1/2}N^{1/2}}{q^{3/2}L^{1/2}}+\frac{d^{7/4}M^{1/2}NL^{1/2}}{q^{3/2}}(1+M/d)+\frac{d^{9/4}N}{q^{3/2}}(1+M/d)^{1/2}.
\end{equation}Therefore from \eqref{dyadicB} and \eqref{ssplit} it follows 
\begin{equation}\notag
\tilde{S}\ll \frac{q^{1/2}M^{1/2}N^{1/2}}{L^{1/2}}+q^{1/4}M^{1/2}NL^{1/2}(1+M/q)+q^{3/4}N(1+M/q)^{1/2}.
\end{equation}

Equating the first two term we obtain
\begin{equation}\notag
L=q^{1/4}N^{-1/2}(1+M/q)^{-1}.
\end{equation}Note that this choice make sense since the right hand side is $\gg 1$ due to the assumption $N\leq q^{1/2}(1+M/q)^{-2}$ in Theorem \ref{prime}. With the above choice we obtain
\begin{equation}\notag
\tilde{S}\ll M^{1/2}N^{3/4}q^{3/8}(1+M/q)^{1/2}+Nq^{3/4}(1+M/q)^{1/2}.
\end{equation}Substituting the above in \eqref{amp}, we finally obtain
\begin{equation}\notag
S\ll M^{1/2}N^{3/4}q^{3/8}(1+M/q)^{1/2}+MN^{3/2}q^{-1/4}(1+M/q)+Nq^{3/4}(1+M/q)^{1/2}.
\end{equation}.

\section{Proof of Theorem \ref{primepower}}
Here $q=p^{\gamma}, \gamma\geq 2$ and $p>2$. We proceed slightly differently in this case. Instead of using the entire modulus $q$ for the conductor lowering mechanism, we only use a part $p^r$, where $r<q$ is chosen optimally later. This serves two purposes; it simplifies certain counting arguments arising from the character sum estimates,  and more importantly, it introduces more terms in the `diagonal' while having a lesser impact in the off-diagonals as compared to the case of amplification.

Note that we can assume $(n,p)=1$ since otherwise the trace function vanishes. As earlier, let $\mathscr{C}$ be the set primes in $[C,2C]$, with $C$ such that
\begin{equation}\label{qassump}
p^rC>100M.
\end{equation}We choose a large $C$ such that $(c,q)=1$ for all $ c\in\mathscr{C}$. Due to the above relation, we can recast $S$ as
\begin{equation}\notag
S=\frac{1}{C}\sum_{c\in \mathscr{C}}\sideset{}{^*}\sum_{n\in \mathscr{N}}\alpha_n\mathop{\sum\sum}_{\substack{m_1\sim M\\m_2\sim M\\p^rc|(m_1-m_2)}}\lambda(m_1) K(m_2n)V(m_1/M)V_1(m_2/M)e(q^{\epsilon}(m_1-m_2)/M),
\end{equation}Detecting the congruence condition using additive characters we obtain
\begin{equation}\notag
\begin{aligned}
S=\frac{1}{p^{r}C}\sideset{}{^*}\sum_{n\in \mathscr{N}}\alpha_n\sum_{c\in \mathscr{C}}\frac{1}{c}\sum_{a(p^{r}c)}\mathop{\sum\sum}_{\substack{m_1\sim M\\m_2\sim M}}\lambda(m_1)K(m_2n)&e(a(m_1-m_2)/p^{r }c)\\
&V(m_1/M)V_1(m_2/M)e(q^{\epsilon}(m_1-m_2)/M).
\end{aligned}
\end{equation}Breaking the $a\,\, ({p^rc})$ sum into Ramanujan sums, we obtain the decomposition
\begin{equation}\notag
S=\sum_{0\leq k\leq r}S(k)+\mathscr{S},
\end{equation}where
\begin{equation}\label{B_k}
\begin{aligned}
S(k)= \frac{1}{p^{r}C}\sideset{}{^*}\sum_{n\in \mathscr{N}}\alpha_n\sum_{c\in \mathscr{C}}\frac{1}{c}\sideset{}{^*}\sum_{a(p^{r-k}c)}\mathop{\sum\sum}_{\substack{m_1\sim M\\m_2\sim M}}\lambda(m_1)&K(m_2n)e(a(m_1-m_2)/p^{r-k }c)\\
&V(m_1/M)V_1(m_2/M)e(q^{\epsilon}(m_1-m_2)/M),
\end{aligned}
\end{equation}and
\begin{equation}\notag
\begin{aligned}
\mathscr{S}=\frac{1}{p^{r}C}\sideset{}{^*}\sum_{n\in \mathscr{N}}\alpha_n\sum_{c\in \mathscr{C}}\frac{1}{c}\sum_{a(p^{r-k})}\mathop{\sum\sum}_{\substack{m_1\sim M\\m_2\sim M}}\lambda(m_1)&K(m_2n)e(a(m_1-m_2)/p^{r-k })\\
&V(m_1/M)V_1(m_2/M)e(q^{\epsilon}(m_1-m_2)/M)
\end{aligned}
\end{equation}Note that a trivial estimation yields
\begin{equation}\notag
\mathscr{S}\ll M^2N/C,
\end{equation}and therefore can ignored since $C$ is allowed to be arbitrary large. The rest of paper is devoted to the estimation of $S(k), 0\leq k\leq r$.

\subsection{Dualisation}Arguing similarly as in Remark \ref{cuspi}, we can assume we are in the cuspidal case. The Voronoi summation transforms the $m_1$-sum in \eqref{B_k} into
\begin{equation}\notag
\sum_{m_1\geq 1}\lambda(m_1)e\left(\frac{am_1}{p^{r-k}c}\right)V(m_1/M)e(q^{\epsilon}m_1/M)= \frac{M}{p^{r-k}c}\sum_{\tilde{m_1}\geq 1}\lambda(\tilde{m_1})e\left(\frac{\pm\bar{a}\tilde{m_1}}{p^{r-k}c}\right)I_1^{\pm}(\tilde{m_1},c),
\end{equation}where $I_1^{\pm}(\tilde{m}_1,c)=M^{-1}H^{\pm}(\tilde{m}_1/p^{2(r-k)}c^2)$, $H^{\pm}$ as in Lemma \ref{gl2vor}. Due to the same reasons as in \eqref{truc},  one can truncate $\tilde{m_1}$-sum (up to a negligible error) to $\tilde{m_1}\asymp p^{2(r-k)+2\epsilon}C^2/M$. 

With the application of the Poisson summation formula the $m_2$-sum in \eqref{B_k} becomes
\begin{equation}\notag
\sum_{m_2\geq 1}K(m_2n)e\left(\frac{-am_2}{p^{r-k}c}\right)V_1(m_2/M)= \frac{M}{p^{\gamma}c}\sum_{\alpha(p^{\gamma} c)}K(\alpha n)e\left(\frac{-a\alpha}{p^{r-k}c}\right)\sum_{\tilde{m_2}\ll p^\gamma c/M}e\left(\frac{-\tilde{m_2}\alpha}{p^\gamma c}\right)I_2(\tilde{m_2},c),
\end{equation}where
\begin{equation}\notag
I_2(\tilde{m_2},c)=\int_{\mathbb{R}}V_1(x)e(q^{\epsilon}x-M\tilde{m_2}x/p^{\gamma}c)dx.
\end{equation}One can again restrict the $\tilde{m_2}$-sum  to $\tilde{m_2}\asymp p^{\gamma+\epsilon}C/M$. 

Combining the above two transformations, we see that $S(k)$ can be replaced by
\begin{equation}\label{trans}
S(k)=\frac{M^2}{p^{\gamma+2r-k}C}\sideset{}{^*}\sum_{n\in \mathscr{N}}\alpha_n\sum_{c\in\mathscr{C}}\frac{1}{c^3}\sum_{\tilde{m_1}\asymp p^{2(r-k)}C^2/M}\sum_{\tilde{m_2}\asymp p^\gamma C/M}\lambda(\tilde{m_1})\mathfrak{C}(\cdots)J(\tilde{m}_1,\tilde{m}_2, c),
\end{equation}where
\begin{equation}\notag
J(\tilde{m_1},\tilde{m_2},c)=I_1^{\pm}(\tilde{m}_1,c)I_2(\tilde{m_2},c),
\end{equation}and
\begin{equation}\label{charpp}
\mathfrak{C}(\cdots)=\sideset{}{^*}\sum_{a (p^{r-k}c)}\sum_{\alpha (p^\gamma c)}K(\alpha n)e\left(-\frac{a\alpha}{p^{r-k}c}-\frac{\tilde{m_2}\alpha}{p^\gamma c}\pm\frac{\overline{a}\tilde{m_1}}{p^{r-k}c}\right).
\end{equation}As in \eqref{finalder},  $J(\tilde{m_1},\tilde{m_2},c)$ satisfies 
\begin{equation}\label{finalder2}
|\tilde{m}_1|^{j_1}|\tilde{m}_2|^{j_2}\frac{\partial^{j_1}\partial^{j_2}J(\tilde{m_1},\tilde{m_2},c)}{\partial \tilde{m_1}^{j_1}\partial \tilde{m_2}^{j_2}}\ll_{j_1,j_2,\epsilon}p^{(j_1+j_2)\epsilon\gamma}.
\end{equation}
Dividing the $\tilde{m_1}$-sum in \eqref{trans} into dyadic blocks $\tilde{m_1}\sim M_1\asymp p^{2(r-k)}C^2/M$ and inserting localising factor $W(\tilde{m_1}/M_1)$ we get
\begin{equation}\label{midya}
S(k)\ll \sup_{\substack{M_1\asymp p^{2(r-k)}C^2/M\\}}S(k,M_1),
\end{equation}where
\begin{equation}\label{Breducpp}
S(k,M_1)=\frac{M^2}{p^{\gamma+2r-k}C}\sideset{}{^*}\sum_{n\in \mathscr{N}}\alpha_n\sum_{c\in\mathscr{C}}\frac{1}{c^3}\sum_{\tilde{m_1}\geq 1}W(\tilde{m_1}/M_1)\sum_{\tilde{m_2}\asymp p^{\gamma}C/M}\lambda(\tilde{m_1})\mathfrak{C}(\cdots)J(\tilde{m}_1,\tilde{m}_2, c).
\end{equation}
\subsection{Simplifying the character sum} Splitting the $\alpha\,\, ({pc})$ sum in \eqref{charpp} using the Chinese remainder theorem and executing the modulo $c$ part, we obtain the congruence relation
\begin{equation}\notag
a=-\overline{p}^{\gamma-r+k}\tilde{m_2}\,\,({c'}),
\end{equation}and we are left with
\begin{equation}\notag
\mathfrak{C}(\cdots)=ce\left(\frac{\pm \overline{p}^{2(r-k)}p^\gamma\overline{\tilde{m_2}}\tilde{m_1}}{c}\right)\sideset{}{^*}\sum_{a (p^{r-k})}\sum_{\alpha (p^\gamma)}K(c\alpha n)e\left(-\frac{a\alpha}{p^{r-k}}-\frac{\tilde{m_2}\alpha}{p^\gamma}\pm\frac{\overline{c}\overline{a}\tilde{m_1}}{p^{r-k}}\right).
\end{equation}Substituting the definition
\begin{equation}\notag
K(c\alpha n)=\frac{1}{p^{\gamma}}\sideset{}{^*}\sum_{\beta (p^\gamma)}e\left(\frac{\beta}{p^\gamma}\right)\sideset{}{^*}\sum_{l(p^\gamma)}e\left(\frac{lc\alpha nb+\overline{l}\overline{\beta}}{p^{\gamma}}\right)
\end{equation}and executing the $\alpha\,\, ({p^{\gamma}})$ sum we obtain
\begin{equation}\notag
l=\overline{cnb}(\tilde{m_2}+p^{\gamma-r+k}a)\,\, ({p^{\gamma}}).
\end{equation}Substituting we get
\begin{equation}\label{tkloos}
\mathfrak{C}(\cdots)=c\cdot e\left(\frac{\pm \overline{p}^{2(r-k)}p^\gamma\overline{\tilde{m_2}}\tilde{m_1}}{c}\right)\sideset{}{^*}\sum_{a (p^{r-k})}S(1,cnb\overline{(\tilde{m_2}+p^{\gamma-r+k}a)}; p^\gamma)e\left(\frac{\overline{c}\overline{a}\tilde{m_1}}{p^{r-k}}\right).
\end{equation}
\subsection{Cauchy-Schwarz and Poisson}
Applying Cauchy-Schwarz inequality to \eqref{Breducpp} keeping the $\tilde{m}_1$ sum outside and everything else inside the absolute value square, we arrive at
\begin{equation}\label{Bk}
S(k,M_1)\ll \frac{M^2}{p^{\gamma+2r-k}C^3}\cdot p^{r-k}C/M^{1/2}\cdot \Omega^{1/2},
\end{equation}where
\begin{equation}\notag
\Omega=\sum_{\tilde{m_1}\in\mathbb{Z}}W(\tilde{m_1}/M_1)\left|\sideset{}{^*}\sum_{n\in\mathscr{N}}\alpha_n\sum_{c\in\mathscr{C}}\sum_{\tilde{m_2}\asymp p^{\gamma}C/M}e\left(\frac{\pm \overline{p}^{2(r-k)}p^\gamma\overline{\tilde{m_2}}\tilde{m_1}}{c}\right)\mathfrak{C}_1(n,c,\tilde{m_1},\tilde{m_2})J(\tilde{m}_1,\tilde{m_2},c)\right|^2,
\end{equation}where $\mathfrak{C}_1(\cdots)$ is $\mathfrak{C}(\cdots)$ in \eqref{tkloos} without the first factor $c$. Opening the absolute value square we get
\begin{equation}\label{bpoisson}
\begin{aligned}
\Omega=&\sideset{}{^*}\sum_{n_1,n_2\in \mathscr{N}}\alpha_{n_1}\overline{\alpha_{n_2}}\sum_{c_1,c_2\in\mathscr{C}}\,\,\sum_{\tilde{m_2},\tilde{m_3}\asymp p^{\gamma}C/M}\\
&\sum_{\tilde{m_1}\in\mathbb{Z}}e\left(\frac{ \overline{p}^{2(r-k)}p^{\gamma}\overline{\tilde{m_2}}\tilde{m_1}}{c_1}-\frac{ \overline{p}^{2(r-k)}p^{\gamma}\overline{\tilde{m_3}}\tilde{m_1}}{c_2}\right)\mathfrak{C}_1(n_1,c_1,\tilde{m_1},\tilde{m_2})\overline{\mathfrak{C}_1(n_2,c_2,\tilde{m_1},\tilde{m_3})}\\
&\hspace{7cm}\times J(\tilde{m_1},\tilde{m_2},c_1)\overline{J(\tilde{m_1},\tilde{m_3},c_2)}W(\tilde{m_1}/M_1).
\end{aligned}
\end{equation}A final application of the Poisson summation formula transforms the $\tilde{m_1}$ sum into
\begin{equation}\label{poissonf}
\begin{aligned}
&\frac{M_1}{p^{r-k}c_1c_2}\sum_{\beta (p^{r-k}c_1c_2)}e\left(\frac{ \overline{p}^{2(r-k)}p^{\gamma}\overline{\tilde{m_2}}\beta}{c_1}-\frac{ \overline{p}^{2(r-k)}p^{\gamma}\overline{\tilde{m_3}}\beta}{c_2}\right)\mathfrak{C}_1(n_1,c_1,\beta,\tilde{m_2})\overline{\mathfrak{C}_1(n_2,c_2,\beta,\tilde{m_3})}\\
&\hspace{7cm}\times\sum_{\tilde{m_4}\in\mathbb{Z}}e\left(\frac{-\tilde{m_4}\beta}{p^{r-k}c_1c_2}\right)\mathscr{I}(\tilde{m}_2,\tilde{m}_3,\tilde{m}_4,c_1,c_2)\\
&=\frac{M_1}{p^{r-k}}\sum_{\tilde{m_4}\in\mathbb{Z}}\mathfrak{C}_2(\cdots)\cdot\mathscr{I}(\tilde{m}_2,\tilde{m}_3,\tilde{m}_4,c_1,c_2)\cdot\delta_{c_2\overline{\tilde{m_2}}-c_1\overline{\tilde{m_3}}=\overline{p}^{2(\gamma-r+k)}\tilde{m_4} (c_1'c_2')},
\end{aligned}
\end{equation}where
\begin{equation}\label{poitrans}
\mathscr{I}(\tilde{m}_2,\tilde{m}_3,\tilde{m}_4,c_1,c_2)=\int_{\mathbb{R}}W(x)J(M_1x,\tilde{m_2},c_1)\overline{J(M_1x,\tilde{m_3},c_2)}e(-M_1\tilde{m_4}x/(p^{r-k}c_1c_2))dx
\end{equation}and
\begin{equation}\notag
\mathfrak{C}_2(\cdots)=\sum_{\beta (p^{r-k})}\mathfrak{C}_1(n_1,c_1,\beta,\tilde{m_2})\overline{\mathfrak{C}_1(n_2,c_2,\beta,\tilde{m_3})}e\left(\frac{-\overline{c_1c_2}\tilde{m_4}\beta}{p^{r-k}}\right).
\end{equation}Due to \eqref{finalder2} and repeated integration by parts, \eqref{poitrans} is negligibly small unless
\begin{equation}\notag
\tilde{m_4}\ll p^{r-k}C^2/M_1\ll M/p^{r-k}.
\end{equation}
Substituting \eqref{poissonf} in place of the $\tilde{m}_1$-sum in \eqref{bpoisson} we obtain
\begin{equation}\label{fomega}
\Omega=\frac{M_1}{p^{r-k}}\sideset{}{^*}\sum_{n_1,n_2\in \mathscr{N}}\alpha_{n_1}\overline{\alpha_{n_2}}\sum_{c_1,c_2\in\mathscr{C}}\,\,\,\,\mathop{\sum_{\tilde{m_2},\tilde{m_3}\asymp p^{\gamma}C/M}\,\,\,\sum_{\tilde{m_4}\ll M/p^{r-k}}}_{c_2\overline{\tilde{m_2}}-c_1\overline{\tilde{m_3}}=\overline{p}^{(\gamma-r+k)}\tilde{m_4} (c_1c_2)}\mathfrak{C}_2(\cdots)\cdot \mathscr{I}(\tilde{m}_2,\tilde{m}_3,\tilde{m}_4,c_1,c_2).
\end{equation}It remains to estimate $\mathfrak{C}_2$. Substituting the definition \eqref{tkloos} and executing the $\beta (p^{\gamma})$ sum, we obtain
\begin{equation}\label{charreduc}
\mathfrak{C}_2(\cdots)=p^{r-k}\mathop{\sideset{}{^*}\sum \sideset{}{^*}\sum}_{\substack{a_1,a_2 (p^{r-k})\\ c_2\overline{a_1}-c_1\overline{a_2}=m_4 (p^{r-k})}}S(1,c_1n_1b\overline{(\tilde{m_2}+p^{\gamma-r+k}a_1)}; p^\gamma)\overline{S}(1,c_2n_2b\overline{(\tilde{m_3}+p^{\gamma-r+k}a_2)}; p^\gamma).
\end{equation}

We proceed for estimating the contribution of the zero and the non-zero frequencies towards \eqref{fomega}.

\subsection{The zero frequency $\tilde{m_4}=0$}
Note that from the congruence condition in \eqref{fomega}, $\tilde{m}_4=0$ implies  $c_1=c_2=c$ and $\tilde{m_3}=\tilde{m_2}\,\, (c)$.  We write $\tilde{m_3}=\tilde{m_2}+c\lambda$, $\lambda\ll p^{\gamma}/M$.
\subsubsection*{Case 1 : $n_1\neq n_2$ or $\lambda\neq 0$.} In this case the trivial estimation of \eqref{fomega} turns out to be worse than the non-diagonal contributions in the sub-Weyl range $M\ll q^{2/3}$. Fortunately, we can overcome this by exploiting the extra cancellations in the long $\tilde{m_2}$($\asymp p^{\gamma}C/M$)-sum. Let $A_0$ denote the contribution of the case under consideration  towards \eqref{fomega}. Then
\begin{equation}\label{a0}
A_0=\frac{M_1}{p^{r-k}}\sideset{}{^*}\sum_{n_1,n_2\in \mathscr{N}}\alpha_{n_1}\overline{\alpha_{n_2}}\sum_{c\in\mathscr{C}}\sum_{\lambda\ll p^{\gamma }/M}\sum_{\tilde{m_2}\asymp p^{\gamma}C/M}\mathfrak{C}_2(\cdots)\cdot \mathscr{I}(\tilde{m}_2,\tilde{m_2}+c\lambda,0,c, c),
\end{equation}where from \eqref{charreduc}
\begin{equation}\notag
\mathfrak{C}_2(\cdots)=p^{r-k}\sideset{}{^*}\sum_{a (p^{r-k})}S(1,cn_1\overline{(\tilde{m_2}+p^{\gamma-r+k}a)}; p^\gamma)\overline{S}(1,cn_2\overline{(c\lambda+\tilde{m_2}+p^{\gamma-r+k}a)}; p^\gamma).
\end{equation}We apply  Poisson summation on the $\tilde{m_2}$-sum and observe that only zero frequency survives since the conductor is $p^{\gamma}$, whereas, the length of the $\tilde{m_2}$-sum is $p^{\gamma}C/M\ggg p^{\gamma}$ when $C$ is suitable large. Hence, the $\tilde{m_2}$-sum in \eqref{a0} becomes
\begin{equation}\label{cpoi}
\begin{aligned}
&\sum_{\tilde{m_2}\asymp p^{\gamma}C/M}\mathfrak{C}_2(\cdots)\cdot \mathscr{I}(\tilde{m}_2,\tilde{m_2}+c\lambda,0,c, c)\\
&=p^{r-k}\cdot \frac{C}{M}\sideset{}{^*}\sum_{a (p^{r-k})}\sideset{}{^*}\sum_{\alpha (p^{\gamma})}S(1,cn_1b\overline{(\alpha+p^{\gamma-r+k}a)}; p^\gamma)\overline{S}(1,cn_2b\overline{(c\lambda+\alpha+p^{\gamma-r+k}a)}; p^\gamma)\cdot\mathcal{I}(\cdots),
\end{aligned}
\end{equation}where 
\begin{equation}\notag
\mathcal{I}(\cdots)=\int_{x\sim 1}\mathscr{I}((p^{\gamma}C/M)x,(p^{\gamma}C/M)x+c\lambda,0,c, c)dx.
\end{equation}After the change of variables $cn_1b\overline{(\alpha+p^{\gamma-r+k}a)}\mapsto \alpha$, the right hand side of \eqref{cpoi} then becomes
\begin{equation}
 \frac{p^{2(r-k)}C}{M}\sideset{}{^*}\sum_{\alpha (p^{\gamma})}S(1,\alpha; p^\gamma)\overline{S}(1,\overline{n_1}n_2\alpha\overline{(\alpha\overline{n_1}\lambda+1)}; p^\gamma)\cdot\mathcal{I}(\cdots)
\end{equation}and therefore
\begin{equation}\label{cpoibd}
\sum_{\tilde{m_2}\asymp p^{\gamma}C/M}\mathfrak{C}_2(\cdots)\cdot \mathscr{I}(\tilde{m}_2,\tilde{m_2}+c\lambda,0,c, c)\ll \frac{p^{2(r-k)}C}{M}\left|\sideset{}{^*}\sum_{\alpha (p^{\gamma})}S(1,\alpha; p^\gamma)\overline{S}(1,\overline{n_1}n_2\alpha\overline{(\alpha\overline{n_1}\lambda+1)}; p^\gamma)\right|.
\end{equation}
An estimate evaluation of the character sum above can be obtained by following the proof of Lemma \ref{ppowerchar}. However, this sum has been already studied in \cite{df} and we quote them directly for simplicity.
\begin{lemma}[R. Dabrowski and B. Fisher]\label{rb}
For $a\in\mathbb{Z}_p^{\times}, b\in\mathbb{Z}_p$ and $\gamma\geq 1$,
\begin{equation}\label{rb}
\sideset{}{^*}\sum_{x (p^{\gamma})}S(1,x; p^{\gamma})\overline{S}(1,ax\overline{(bx+1)};p^{\gamma})\ll p^{3\gamma/2}p^{(\min\{\gamma, \nu_p(a-1),\nu_p(b)\})/2}.
\end{equation}
\end{lemma}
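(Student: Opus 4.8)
The plan is to separate the prime case $\gamma=1$, which is a Weil-type estimate already recorded in the excerpt, from the higher prime-power case $\gamma\ge 2$, which I would handle by the $p$-adic method of stationary phase. For $\gamma=1$, note that $x\mapsto ax\overline{bx+1}$ is the fractional linear transformation attached to $\left(\begin{smallmatrix}a&0\\b&1\end{smallmatrix}\right)$, whose determinant $a$ is a $p$-adic unit; hence \eqref{dfest} applies and gives $\ll p^{3/2}+p^{2}\delta_{(a\equiv 1,\ b\equiv 0\ (p))}$. In the exceptional case one has $\nu_p(a-1)\ge 1$ and $\nu_p(b)\ge 1$, so $\min\{\gamma,\nu_p(a-1),\nu_p(b)\}=1$ and $p^{2}=p^{3\gamma/2}p^{1/2}$, matching the claim. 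From now on one assumes $\gamma\ge 2$; since the modulus in all applications is $p^{\gamma}$ with $p$ odd, one may also assume $p\neq 2$ (the case $p=2$ is handled analogously via the $2$-adic Kloosterman sum evaluation).

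For $p$ odd and $\gamma\ge 2$ I would open both Kloosterman sums by the explicit evaluation \eqref{kloosterman0} (valid precisely in this range). The summand vanishes unless $x$ is a unit square and $a(bx+1)$ a quadratic residue mod $p$ (with $(bx+1,p)=1$); on that locus, writing $x\equiv y^{2}\pmod{p^{\gamma}}$ and choosing $w\equiv \sqrt a\,y(1+by^{2})^{-1/2}\pmod{p^{\gamma}}$ with $w^{2}\equiv ax\overline{bx+1}$, the product $S(1,x;p^{\gamma})\overline{S}(1,ax\overline{bx+1};p^{\gamma})$ equals $p^{\gamma}$ times a bounded number of terms $\eta\,e\big(2(y\pm w)/p^{\gamma}\big)$ with $|\eta|\le 1$ (a product of unit Gauss-sign constants and a quadratic-character value in $y$). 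Summing over $x$, equivalently over units $y$ with each residue class counted boundedly often, the left side of \eqref{rb} becomes a bounded combination of sums
\begin{equation}\notag
T_\pm=p^{\gamma}\sideset{}{^*}\sum_{y\ (p^{\gamma})}\eta(y)\,e\left(\frac{\phi_\pm(y)}{p^{\gamma}}\right),\qquad \phi_\pm(y)=2y\pm 2\sqrt a\,y\,(1+by^{2})^{-1/2},
\end{equation}
where $\phi_\pm$ is $p$-adically analytic on the relevant (open, Legendre-constrained) set of $y$.

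Each $T_\pm$ is then evaluated by the $p$-adic stationary phase formula (see, e.g., \cite{milicevic} or \cite{df}). One computes $\phi_\pm'(y)=2\big(1\pm\sqrt a\,(1+by^{2})^{-3/2}\big)$, so a critical point modulo $p^{\lceil\gamma/2\rceil}$ satisfies $(1+by^{2})^{3}\equiv a$, and there $\phi_\pm''(y)\equiv 6by/(1+by^{2})$. When $\nu_p(a-1)=0$ and $\nu_p(b)=0$, every critical point has $y$ and $1+by^{2}$ units, so $\phi_\pm''$ is a unit, there are $O(1)$ of them (the numerator of $\phi_\pm'$ has degree $\le 6$ in $y$), and nondegenerate stationary phase gives $T_\pm\ll p^{\gamma}\cdot p^{\gamma/2}=p^{3\gamma/2}$ — the claim, since $\min\{\gamma,\nu_p(a-1),\nu_p(b)\}=0$ here. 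When $\nu_p(b)\ge 1$ but $\nu_p(a-1)=0$, the same computation shows $\phi_\pm'(y)\equiv 2\not\equiv 0\pmod p$, so there is no critical point and $T_\pm$ is negligible.

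The remaining range $\nu_p(a-1)=\alpha\ge 1$ and/or $\nu_p(b)=\beta\ge 1$ is the real content, and the main obstacle. Expanding $(1+by^{2})^{-1/2}$ one finds $\phi_\pm(y)=2(1\pm\sqrt a)y+c_{3}y^{3}+\cdots$, an odd function with $\nu_p(c_{2j+1})\ge\beta$ for $j\ge 1$ and, for a suitable choice of sign and of the square root, $\nu_p(1\pm\sqrt a)=\nu_p(1-a)=\alpha$. According as $\min\{\gamma,\alpha,\beta\}$ equals $\gamma$, $\alpha$, or $\beta$, the critical locus $\phi_\pm'(y)\equiv 0\pmod{p^{\lceil\gamma/2\rceil}}$ is either all of the residue disc (when $\alpha,\beta\ge\gamma$, so that $\phi_\pm$ is constant mod $p^{\gamma}$ and the sum is $\asymp p^{\gamma}$, giving the extremal $p^{2\gamma}=p^{3\gamma/2}p^{\gamma/2}$), or empty (negligible), or a thickened set of $\asymp p^{\min\{\gamma,\alpha,\beta\}}$ residues; in each case, after rescaling $y\mapsto p^{t}y'+y_{0}$ about a critical point and applying the stationary phase bound uniformly in the $p$-valuations of the coefficients of the phase, one obtains $T_\pm\ll p^{\gamma}\cdot p^{(\gamma+\min\{\gamma,\nu_p(a-1),\nu_p(b)\})/2}$, which is \eqref{rb}. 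The crux is carrying out this casework cleanly, uniformly in $\gamma$ and in its parity — exactly the ``complicated counting'' the authors prefer to avoid in Lemma \ref{ppowerchar}, and the reason the paper simply cites \cite{df}.
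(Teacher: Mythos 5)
Your argument takes a genuinely different route from the paper, for a simple reason: the paper does not prove this lemma at all. It is quoted as a summary of Theorem~3.2 and Propositions~3.3--3.4 of the cited Dabrowski--Fisher paper \cite{df}, specialised to the fractional-linear map $x\mapsto ax\overline{(bx+1)}$. What you have written is a self-contained sketch whose overall shape --- the explicit Kloosterman evaluation \eqref{kloosterman0} when $\gamma\ge 2$, then $p$-adic stationary phase and a count of critical points --- is in fact the method of \cite{df} itself in the prime-power case, so in spirit you are re-deriving the cited result rather than finding a new road to it.

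The concrete pieces check out. For $\gamma=1$ the transformation is attached to $\left(\begin{smallmatrix}a&0\\b&1\end{smallmatrix}\right)$, whose determinant $a$ is a unit, and the exceptional delta in \eqref{dfest} fires only when $a\equiv 1$ and $b\equiv 0\pmod p$, giving $p^2=p^{3/2}\cdot p^{1/2}$ as required. For $\gamma\ge 2$ and $p$ odd, the formulas $\phi_\pm'(y)=2\bigl(1\pm\sqrt a\,(1+by^2)^{-3/2}\bigr)$ and $\phi_\pm''(y)=\pm 6by/(1+by^2)$ at a critical point are correct, the nondegenerate case $\nu_p(a-1)=\nu_p(b)=0$ does give $T_\pm\ll p^{3\gamma/2}$, and the bookkeeping $\nu_p(1-\sqrt a)=\nu_p(1-a)$ for the branch $\sqrt a\equiv 1\pmod p$ is right. (The parenthetical remark that $p=2$ is ``handled analogously'' is optimistic --- the $2$-adic evaluation and the quadratic-residue constraints behave differently --- but the paper only invokes the lemma with $\gamma\ge 2$ for odd $p$, so this is harmless here.)

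The genuine gap, which you flag yourself, is the degenerate range $\alpha=\nu_p(a-1)\ge 1$ or $\beta=\nu_p(b)\ge 1$: this is exactly where the extra factor $p^{\min\{\gamma,\alpha,\beta\}/2}$ in \eqref{rb} comes from, and your final paragraph asserts the conclusion rather than deriving it. Closing it requires showing that the critical locus modulo $p^{\lceil\gamma/2\rceil}$ has cardinality $\asymp p^{\min\{\gamma,\alpha,\beta\}}$ and that, after rescaling $y\mapsto p^{t}y'+y_0$ about each critical point, stationary phase loses exactly that half-power, uniformly in $\alpha$, $\beta$ and the parity of $\gamma$; this is complicated by the fact that the leading behaviour of $\phi_\pm'$ near its zero set depends on the relative sizes of $\alpha$ and $\beta$, and by degenerate second derivatives when $\beta$ is large. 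That casework is essentially the entire content of the Dabrowski--Fisher result being invoked. As it stands the sketch cannot substitute for a proof of the lemma: either the valuation analysis must be carried out in full, or one should simply cite \cite{df} as the paper does.
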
This is the summary of their Theorem 3.2, Proposition 3.3 and Proposition 3.4, in case of the particular character sum in \eqref{rb}. Plugging this estimate in \eqref{cpoibd}, we obtain
\begin{equation}\notag
\sum_{\tilde{m_2}\asymp p^{\gamma}C/M}\mathfrak{C}_2(\cdots)\cdot \mathscr{I}(\tilde{m}_2,\tilde{m_2}+c\lambda,0,c, c)\ll \frac{p^{3\gamma/2+2(r-k)}C}{M}\cdot p^{(\min\{\gamma, \nu_p(n_1-n_2),\nu_p(\lambda)\})/2}.
\end{equation}Consequently, \eqref{a0} can be bounded by
\begin{equation}\label{a0f}
\begin{aligned}
A_0&\ll \frac{M_1}{p^{r-k}}\cdot \frac{p^{3\gamma/2+2(r-k)}C}{M} \sum_{c\in\mathscr{C}}\mathop{\sum_{n_1,n_2\in\mathscr{N}}\sum_{\lambda \ll p^{\gamma}/M}}_{(n_1-n_2, \lambda)\neq (0,0)}p^{(\min\{\gamma, \nu_p(n_1-n_2),\nu_p(\lambda)\})/2}\\
&\ll \frac{M_1}{p^{r-k}}\cdot \frac{p^{3\gamma/2+2(r-k)}C}{M}\cdot C \cdot (N^2p^{\gamma}/M)\\
&\ll \frac{p^{5\gamma/2+3(r-k)}C^4N^2}{M^3}.
\end{aligned}
\end{equation}

\subsubsection*{Case 2: $n_1=n_2$ and $\tilde{m_2}=\tilde{m_3}$}In this case we use the trivial estimate
\begin{equation}\notag
\mathfrak{C}_2(\cdots)=p^{r-k}\sideset{}{^*}\sum_{a (p^{r-k})}S(1,cn_1\overline{(\tilde{m_2}+p^{\gamma-r+k}a)}; p^\gamma)\overline{S}(1,cn_2\overline{(c\lambda+\tilde{m_2}+p^{\gamma-r+k}a)}; p^\gamma)\ll p^{\gamma+2(r-k)}.
\end{equation}So if $B_0$ denotes the contribution of this case towards \eqref{fomega}, then
\begin{equation}\label{b0f}
B_0\ll \frac{M_1}{p^{r-k}}\sum_{n_1\in\mathscr{N}}\sum_{c\in\mathscr{C}}\sum_{\tilde{m_2}\ll p^{\gamma}C/M}p^{\gamma+2(r-k)}\ll \frac{M_1}{p^{r-k}}\cdot NC\cdot \frac{p^{\gamma}C}{M}\cdot p^{\gamma+2(r-k)}\ll \frac{p^{2\gamma+3(r-k)}C^4N}{M^2}.
\end{equation}

Combining \eqref{a0f} and \eqref{b0f}, we obtain
\begin{equation}\label{zerofref}
\Omega_0\ll \frac{p^{2\gamma+3(r-k)}C^4N}{M^2}+\frac{p^{5\gamma/2+3(r-k)}C^4N^2}{M^3}.
\end{equation}


\subsection{Non-zero frequencies $m_4\neq 0$}
We divide the $\tilde{m_4}$-sum in \eqref{fomega} into cases according to the two parts given by Lemma \ref{ppowerchar} and denote their contribution towards \eqref{fomega} by $A_1$ for the first part, and $A_2$ for the second part. Note that $u=r-k$ satisfies the hypothesis
\begin{equation}\label{choice}
u\leq 4\gamma/5
\end{equation}
 in our final choice of $r$.
\subsubsection*{Case 1: $(r-k)/2<\gamma-(r-k)$ or $\nu_p(\tilde{m_4})<\gamma-(r-k)$}In this case, the first part of Lemma \ref{ppowerchar} gives
\begin{equation}\notag
\mathfrak{C}_2(\cdots)\ll p^{\gamma+3(r-k)/2+1/2}\cdot p^{\nu_p(\tilde{m_4})}.
\end{equation}
Substituting this in \eqref{fomega}, it follows
\begin{equation}\label{nzerof}
A_1\ll\frac{M_1}{p^{r-k}}\cdot p^{\gamma+3(r-k)/2+1/2}\sideset{}{^*}\sum_{n_1,n_2\in \mathscr{N}}\,\,\sum_{c_1,c_2\in\mathscr{C}}\,\,\,\,\mathop{\sum_{\tilde{m_2},\tilde{m_3}\asymp p^{\gamma}C/M}\,\,\,\sum_{\tilde{m_4}\ll M/p^{r-k}}}_{c_2\overline{\tilde{m_2}}-c_1\overline{\tilde{m_3}}=\overline{p}^{(\gamma-r+k)}\tilde{m_4} (c_1c_2)}p^{\nu_p(\tilde{m_4})}.
\end{equation}Next consider the $\tilde{m_2},\tilde{m_3}$ sum in \eqref{nzerof}. Given $\tilde{m_4}(\neq 0)$, there are $(c_1, \tilde{m_4})(c_2, \tilde{m_4})(1+p^{\gamma}/M)^2$ many $(\tilde{m_2},\tilde{m_3})$ pairs satisfying the congruence $\bmod\,\,c_1c_2$.  Hence
\begin{equation}\label{zeoc}
\begin{aligned}
A_1&\ll \frac{M_1}{p^{r-k}}\cdot p^{\gamma+3(r-k)/2+1/2}\sideset{}{^*}\sum_{n_1,n_2\in \mathscr{N}}\,\,\sum_{\tilde{m_4}\ll M/p^u}p^{\nu_p(\tilde{m_4})}\sum_{c_1,c_2\in\mathscr{C}}(c_1, \tilde{m_4})(c_2, \tilde{m_4})(1+p^{\gamma}/M)^2\\
&\ll \frac{M_1}{p^{r-k}}\cdot p^{\gamma+3(r-k)/2+1/2}\cdot N^2C^2\left(1+\frac{p^{\gamma}}{M}\right)^2\cdot\frac{M}{p^{r-k}}.
\end{aligned}
\end{equation}
\subsubsection*{Case 2: $(r-k)/2\geq \gamma-(r-k)$ and $\nu_p(\tilde{m_4})\geq\gamma-(r-k)$}In this case the second part of Lemma \ref{ppowerchar} applies. The condition $t_1^{-3/2}s_1\lambda_1=t_2^{-3/2}s_2\lambda_2\,\, ({p^{\gamma-u}})$ translates to
\begin{equation}\label{a2cong}
c_1n_2=c_2n_1(\overline{\tilde{m_2}}\tilde{m_3})^3 \,\, ({p^{\gamma-(r-k)}}),
\end{equation}and we have the estimate
\begin{equation}\notag
\mathfrak{C}_2(\cdots)\ll p^{\gamma+3(r-k)/2+1/2}\cdot p^{(r-k)/2}.
\end{equation}
We write $\tilde{m_4}=p^{\gamma-(r-k)}\lambda, \lambda\ll M/p^{\gamma}$. The congruence condition modulo $c_1c_2$ in \eqref{fomega} then implies
\begin{equation}\label{a2cong2}
c_2=\tilde{m_2}\lambda \,\, ({c_1})\,\,\,\text{and}\,\,\,\,c_1=\tilde{m_3}\lambda\,\, ({c_2}),
\end{equation}or in other words
\begin{equation}\label{del}
c_2=c_1\delta_1+\tilde{m_2}\lambda,\,\,\text{and}\,\,\,\, c_1=c_2\delta_2+\tilde{m_3}\lambda,
\end{equation}for some $\delta_1,\delta_2\ll 1+O\left(\frac{(|\tilde{m_2}|+|\tilde{m_3}|)\lambda}{C}\right)$. Observe that 
\begin{equation}\notag
\frac{(|\tilde{m_2}|+|\tilde{m_3}|)\lambda}{C}\ll \frac{p^{\gamma}C}{MC}\cdot\frac{M}{p^{\gamma}}\ll 1.
\end{equation}Hence $\delta_1,\delta_2$ in \eqref{del} are bounded and so 
\begin{equation}\label{del1del2}
A_2\ll\frac{M_1}{p^{r-k}}\cdot p^{\gamma+3(r-k)/2+1/2}\cdot p^{(r-k)/2}\sum_{\delta_1,\delta_2\ll 1}\sum_{\lambda\ll M/p^{\gamma}}\sum_{\tilde{m_2},\tilde{m_3}\asymp p^{\gamma}C/M}\,\,\sum_{c_1,c_2\in\mathscr{C}}\,\,\sideset{}{^{\#}}\sum_{n_1,n_2\in\mathscr{N}}1,
\end{equation}where `$\#$' denotes the restrictions \eqref{a2cong} and \eqref{del}. When $\delta_1\delta_2\neq 1$, note that \eqref{del} uniquely determines the pair $(c_1,c_2)$. Fixing $(c_1,c_2)$, the sum over $n_2$ with the restriction \eqref{a2cong} is then bounded by $(1+N/p^{\gamma-(r-k)})$, and we see that \eqref{del1del2} is
\begin{equation}\label{neq1}
\begin{aligned}
A_2&\ll\frac{M_1}{p^{r-k}}\cdot p^{\gamma+3(r-k)/2+1/2}\cdot p^{(r-k)/2}\sum_{\delta_1,\delta_2\ll 1}\sum_{\lambda\ll M/p^{\gamma}}\sum_{\tilde{m_2},\tilde{m_3}\asymp p^{\gamma}C/M}\sum_{n_1\in\mathscr{N}}\left(1+\frac{N}{p^{\gamma-(r-k)}}\right)\\
&\ll \frac{M_1}{p^{r-k}}\cdot p^{\gamma+3(r-k)/2+1/2}\cdot N\left(1+\frac{N}{p^{\gamma-(r-k)}}\right)\left(\frac{p^{\gamma}C}{M}\right)^2\frac{M}{p^{\gamma}}p^{(r-k)/2}.
\end{aligned}
\end{equation}A comparison shows that the last estimate is the second line of \eqref{zeoc} times the factor
\begin{equation}\notag
\frac{1}{N}\left(1+\frac{M}{p^{\gamma}}\right)^{-2}\left(1+\frac{N}{p^{\gamma-(r-k)}}\right)p^{3(r-k)/2-\gamma}\ll p^{5(r-k)/2-2\gamma}+\left(1+\frac{M}{p^{\gamma}}\right)^{-2}\frac{p^{3(r-k)/2}}{Np^{\gamma}}\ll 1,
\end{equation}since our choice of $r$ will satisfy (see \eqref{rchoice})
\begin{equation}\label{hypo}
p^{r}\ll \min\left\{(Np^{\gamma})^{2/3}\left(1+\frac{M}{p^{\gamma}}\right)^{4/3}, p^{4\gamma/5} \right\}.
\end{equation}Hence $A_2\ll A_1$ when $\delta_1\delta_2\neq 1$. When $\delta_1\delta_2=1$, \eqref{del} will imply $\tilde{m_2}=\pm \tilde{m_3}$. Since $\tilde{m_2}>0$, it follows $\tilde{m_2}=\tilde{m_3}, \delta_1=\delta_2=-1$. Consequently, \eqref{del}  and \eqref{a2cong} becomes
\begin{equation}\notag
c_2=-c_1+\tilde{m_2}\lambda\,\,\,\,\text{and}\,\,\,\,c_1(n_1+n_2)=n_1\tilde{m_2}\lambda \,\, ({p^{\gamma-(r-k)}}).
\end{equation}Since $(n_1\tilde{m_2},p)=1$, the number of $c_1$ satisfying the last congruence is $\ll p^{\nu_p(\lambda)}C/p^{\gamma-(r-k)}$. Hence, \eqref{del1del2} in this case becomes
\begin{equation}\notag
\begin{aligned}
A_2&\ll \frac{M_1}{p^{r-k}}\cdot p^{\gamma+3(r-k)/2+1/2}\cdot p^{(r-k)/2}\sum_{\lambda\ll M/p^{\gamma}}\sum_{\tilde{m_2}\asymp p^{\gamma}C/M}\sum_{n_1,n_2\in\mathscr{N}}p^{\nu_p(\lambda)}C/p^{\gamma-(r-k)}\\
&\ll \frac{M_1}{p^{r-k}}\cdot p^{\gamma+3(r-k)/2+1/2} N^2\left(\frac{p^{\gamma}C}{M}\right)\cdot\frac{M}{p^{\gamma}}\cdot Cp^{3(r-k)/2-\gamma}.
\end{aligned}
\end{equation}The last estimate is the second line of \eqref{zeoc} times the factor
\begin{equation}\notag
\frac{p^{\gamma}}{M}\left(1+\frac{p^{\gamma}}{M}\right)^{-2}p^{3(r-k)/2-\gamma}\ll p^{3(r-k)/2-\gamma}\ll 1,
\end{equation}where we have again invoked \eqref{hypo}.

We conclude that the non-zero frequencies are dominated by $A_1$ in \eqref{zeoc}, that is,
\begin{equation}\label{ndiagf}
\begin{aligned}
\Omega_{\neq 0}&\ll\frac{M_1}{p^{r-k}}\cdot p^{\gamma+3(r-k)/2+1/2}\cdot N^2C^2\left(1+\frac{p^{\gamma}}{M}\right)^2\cdot\frac{M}{p^{r-k}}\\
&\ll p^{\gamma+3(r-k)/2+1/2}N^2C^4\left(1+\frac{p^{\gamma}}{M}\right)^2.
\end{aligned}
\end{equation}

From \eqref{zerofref} and \eqref{ndiagf} we finally have
\begin{equation}\notag
\Omega\ll \frac{p^{2\gamma+3(r-k)}C^4N}{M^2}+\frac{p^{5\gamma/2+3(r-k)}C^4N^2}{M^3}+p^{\gamma+3(r-k)/2+1/2}N^2C^4\left(1+\frac{p^{\gamma}}{M}\right)^2.
\end{equation}Substituting the last bound into \eqref{Bk} we arrive at
\begin{equation}\label{Bnzero}
\begin{aligned}
&S(k,M_1)\\
&\ll \frac{M^2}{p^{\gamma+2r-k}C^3}\cdot \frac{p^{r-k}C}{M^{1/2}}\cdot \left(\frac{p^{2\gamma+3(r-k)}C^4N}{M^2}+\frac{p^{5\gamma/2+3(r-k)}C^4N^2}{M^3}\right.\\
&\hspace{9cm}\left.+p^{\gamma+3(r-k)/2+1/2}N^2C^4\left(1+\frac{p^{\gamma}}{M}\right)^2\right)^{1/2}\\
&\ll p^{r/2-3k/2}M^{1/2}N^{1/2}+p^{\gamma/4+r/2-3k/4}N +p^{-\gamma/2-r/4-3k/4+1/4}M^{3/2}N(1+p^{\gamma}/M).
\end{aligned}
\end{equation}
\subsection{Optimal choice for $r$.}It follows from \eqref{Bnzero} and \eqref{midya} that
\begin{equation}\label{optimise}
S\ll p^{r/2}M^{1/2}N^{1/2}+p^{\gamma/2-r/4+1/4}M^{1/2}N(1+M/p^{\gamma})+p^{\gamma/4+r/2}N.
\end{equation}Equating the first two terms we obtain
\begin{equation}\label{choiced}
p^{r}= p^{2\gamma/3+1/3}N^{2/3}(1+M/p^{\gamma})^{4/3},
\end{equation}that is,
\begin{equation}\notag
r\approx\lfloor 2/3(\gamma+1+\log_p N(1+M/p^{\gamma})^2)\rfloor.
\end{equation}But recall from \eqref{choice} that $r$ is assumed to be at most $4\gamma/5$. We choose
\begin{equation}\label{rchoice}
r=\lfloor \min\{2/3(\gamma+\log_p N(1+M/p^{\gamma})^2), 4\gamma/5\}\rfloor.
\end{equation}So the third term in \eqref{optimise} can be bounded by $p^{13\gamma/20}N$. Note that when $N\leq p^{\gamma/5}(1+M/p^{\gamma})^{-2}$,
\begin{equation}\notag
2/3(\gamma+\log_p N(1+M/p^{\gamma})^2)\leq  4\gamma/5,
\end{equation}so that \eqref{choiced} holds (upto a factor of $p^{5/3}$) and we get
\begin{equation}\notag
S\ll p^{7/12}p^{\gamma/3}M^{1/2}N^{5/6}(1+M/p^{\gamma})^{2/3}+p^{13\gamma/20}N,
\end{equation}in this case. When $N> p^{\gamma/5}(1+M/p^{\gamma})^{-2}$, we have $r=\lfloor 4\gamma/5\rfloor $ so that the second term in \eqref{optimise} dominates the first and we get
\begin{equation}\notag
S\ll p^{1/4}p^{\gamma/2-\lfloor 4\gamma/5 \rfloor/4+\epsilon}M^{1/2}N(1+M/p^{\gamma})+p^{13\gamma/20}N.
\end{equation}Combining, we have the final estimate
\begin{equation}\label{festprimep}
S\ll p^{7/12}q^{1/3}M^{1/2}N^{5/6}(1+M/q)^{2/3}+\delta_{(N>q^{1/5}(1+M/q)^{-2})}p^{1/4}q^{3/10}M^{1/2}N(1+M/q)+q^{13/20}N,
\end{equation}where $q=p^{\gamma}$.

\section{An alternative estimate}
We will use the above estimates for $N$ going upto certain threshold. For $N$ larger, we get better estimates simply by applying Cauchy-Schwarz inequality followed by Poisson summation in the $m$-sum. Recall that
\begin{equation}\label{Sn}
S=\sum_{n\in\mathscr{N}}\sum_{m\geq 1}\alpha_n\lambda(m)K(mn)V(m/M).
\end{equation} where $$K(m)=\tilde{\text{Kl}}_3(mb,q)=\frac{1}{q}\sideset{}{^*}\sum_{x(q)}e\left(\frac{mbx}{q}\right)S(1,\overline{x}; q).$$ 
\begin{lemma}\label{alt}
For $q=p^{\gamma}, \gamma\geq 1$, we have
\begin{equation}\notag
S\ll MN^{1/2}+M^{1/2}Nq^{1/4}(1+M/q)^{1/2}.
\end{equation}
\end{lemma}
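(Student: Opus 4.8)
The plan is to apply the Cauchy--Schwarz inequality to \eqref{Sn} keeping the $GL(2)$ coefficients $\lambda(m)$ outside the absolute value, and then to exploit the $q$-periodicity of $m\mapsto K(mn_1)\overline{K(mn_2)}$ by Poisson summation, thereby reducing the whole estimate to a correlation sum of two Kloosterman sums modulo $q$. Fix a smooth $W$ majorising $|V|$ and supported in a compact subset of $\mathbb{R}_{>0}$. Writing $S=\sum_{m\geq1}\lambda(m)V(m/M)\sum_{n\in\mathscr{N}}\alpha_nK(mn)$, applying Cauchy--Schwarz with the $\lambda(m)$'s outside, and invoking the Rankin--Selberg / divisor bound $\sum_{m\leq x}|\lambda(m)|^2\ll x$, we obtain $|S|^2\ll M\,T$ with
\[
T=\sum_{n_1,n_2\in\mathscr{N}}\alpha_{n_1}\overline{\alpha_{n_2}}\sum_{m\geq1}W(m/M)\,K(mn_1)\overline{K(mn_2)}.
\]

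For fixed $n_1,n_2$ the coefficients $m\mapsto K(mn_1)\overline{K(mn_2)}$ are $q$-periodic, so Poisson summation rewrites the inner sum as $\tfrac{M}{q}\sum_{h\in\mathbb{Z}}\widehat{W}(hM/q)\,\mathfrak{D}(h;n_1,n_2)$, where $\mathfrak{D}(h;n_1,n_2)=\sum_{m\,(q)}K(mn_1)\overline{K(mn_2)}e(hm/q)$; the rapid decay of $\widehat{W}$ confines the effective range to $|h|\ll q^{1+\epsilon}/M$ (with $\sum_h|\widehat{W}(hM/q)|\ll1+q/M$), and since the asserted bound is trivial when $M\ll q^{\epsilon}$ we may assume $q\nmid h$ there. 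As in the earlier reductions of the paper we may also assume $(n_1n_2,p)=1$ --- the remaining terms vanish when $\gamma\geq2$ (as $\tilde{\mathrm{Kl}}_3(v,p^{\gamma})=0$ whenever $p\mid v$) and are negligible when $\gamma=1$. Now inserting $K(m)=\tfrac1q\sideset{}{^*}\sum_{x\,(q)}e(mbx/q)\,S(1,\overline{x};q)$ (valid for all $m$), executing the complete $m$-sum, parametrising the surviving variable through the resulting linear congruence, and substituting $u=\overline{x_1}$, one is left with
\[
\mathfrak{D}(h;n_1,n_2)=\frac1q\sideset{}{^*}\sum_{\substack{u\,(q)\\ p\,\nmid\,1+h\overline{bn_1}u}}S(1,u;q)\,\overline{S\bigl(1,\ n_2\overline{n_1}\,u\,\overline{(h\overline{bn_1}u+1)};\ q\bigr)},
\]
which is exactly the character sum controlled by the Dabrowski--Fisher estimate of Lemma \ref{rb}, with their parameters $a=n_2\overline{n_1}$ and $b=h\overline{bn_1}$, so that $\nu_p(a-1)=\nu_p(n_1-n_2)$ and $\nu_p(b)=\nu_p(h)$. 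Lemma \ref{rb} then yields, uniformly,
\[
\mathfrak{D}(h;n_1,n_2)\ll q^{1/2+\epsilon}\,p^{\frac12\min\{\gamma,\ \nu_p(n_1-n_2),\ \nu_p(h)\}}.
\]

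Feeding this bound back into $T$: the diagonal term $h=0$, $n_1=n_2$ (where the minimum equals $\gamma$, reflecting the self-correlation of the Kloosterman sheaf) contributes $\ll\tfrac{M}{q}\cdot q\cdot N=MN$; the term $h=0$, $n_1\neq n_2$ contributes $\ll\tfrac{M}{q}\cdot q^{1/2}\sum_{n_1\neq n_2}p^{\frac12\nu_p(n_1-n_2)}\ll MN^2/q^{1/2}$, using $\sum_{n_1\neq n_2}p^{\frac12\nu_p(n_1-n_2)}\ll N^2$ (organise by the common difference); and the frequencies $h\neq0$, where $\nu_p(h)<\gamma$, contribute --- splitting once more into $n_1=n_2$ and $n_1\neq n_2$ and using $\sum_{h\neq0}|\widehat{W}(hM/q)|\,p^{\frac12\nu_p(h)}\ll1+q/M$ --- a total of $\ll q^{\epsilon}\bigl(MN/q^{1/2}+Nq^{1/2}+MN^2/q^{1/2}+N^2q^{1/2}\bigr)$. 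Collecting everything (the pieces $MN/q^{1/2}$, $Nq^{1/2}$, $MN^2/q^{1/2}$ being dominated), $T\ll q^{\epsilon}\bigl(MN+N^2q^{1/2}(1+M/q)\bigr)$, hence $|S|^2\ll q^{\epsilon}\bigl(M^2N+MN^2q^{1/2}(1+M/q)\bigr)$, and extracting square roots gives $S\ll MN^{1/2}+M^{1/2}Nq^{1/4}(1+M/q)^{1/2}$.

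The crux is the character-sum step: bringing $\mathfrak{D}(h;n_1,n_2)$ into precisely the normal form demanded by Lemma \ref{rb}, including handling honestly the residues modulo $p$ at which the induced M\"obius transformation $u\mapsto n_2\overline{n_1}u/(h\overline{bn_1}u+1)$ degenerates (the source of the side condition $p\nmid1+h\overline{bn_1}u$ on the $u$-sum), and then verifying that the gcd-type saving it provides, once summed against $|\widehat{W}(hM/q)|$ over $h$ and over $n_1,n_2\in\mathscr{N}$, really does collapse to the clean shape $MN+N^2q^{1/2}(1+M/q)$ rather than to anything larger.
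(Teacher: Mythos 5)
Your proposal is correct and follows essentially the same route as the paper: Cauchy--Schwarz with $\lambda(m)$ outside, Poisson on the $m$-sum, reduction to a correlation of Kloosterman sums in the Dabrowski--Fisher normal form, and then Lemma~\ref{rb} to produce the $p^{\frac12\min\{\gamma,\,\nu_p(n_1-n_2),\,\nu_p(h)\}}$ saving. The only cosmetic differences are that you stay with the normalised $K(\cdot)$ and count directly with the $p^{\nu_p/2}$ weights rather than routing through the paper's $\sum_{k\mid q}k^{1/2}\delta_{(\cdot)}$ reformulation (Lemma~\ref{altcharest}), and you handle the $(n_1n_2,p)>1$ degenerate cases explicitly; the bookkeeping and final estimate agree.
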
	
To see this, we apply Cauchy-Schwarz inequality to \eqref{Sn} keeping the $m$-sum outside to get
\begin{equation}\label{csalt}
S\ll \frac{M^{1/2}}{q}\left(\sum_{m\in\mathbb{Z}}V(m/M)\left|\sum_{n\in\mathscr{N}}\alpha_n\sideset{}{^*}\sum_{x (q)}e\left(\frac{xmnb}{q}\right)S(1,\overline{x};q)\right|^2\right)^{1/2}.
\end{equation}Opening the absolute value square and dualising the $m$-sum using the Poisson summation formula we arrive at
\begin{equation}\label{altcs}
\begin{aligned}
&\sum_{m\in\mathbb{Z}}V(m/M)\left|\sum_{n\in\mathscr{N}}\alpha_n\sideset{}{^*}\sum_{x (q)}e\left(\frac{xmnb}{q}\right)S(1,\overline{x};q)\right|^2\\
&=\frac{M}{q}\sum_{n_1,n_2\in \mathscr{N}}\alpha_{n_1}\overline{\alpha}_{n_2}\sideset{}{^*}\sum_{x_1,x_2 (q)}S(1,\overline{x_1};q)\overline{S}(1,\overline{x_2};q)\sum_{r(q)}e\left(\frac{rb(n_1x_1-n_2x_2)}{q}\right)\sum_{\tilde{m}\in\mathbb{Z} }e\left(\frac{-\tilde{m}r}{q}\right)I(\tilde{m})\\
&=M\sum_{n_1,n_2\in \mathscr{N}}\alpha_{n_1}\overline{\alpha}_{n_2}\sum_{\tilde{m}\in\mathbb{Z} }\mathcal{C}(n_1, n_2, \tilde{m})\cdot I(\tilde{m}) ,
\end{aligned}
\end{equation}where
\begin{equation}\notag
 I(\tilde{m}) =\int_{\mathbb{R}}V(x)e(-M\tilde{m}x/q)dx
\end{equation}and
\begin{equation}\label{charalt}
\mathcal{C}(n_1,n_2,\tilde{m})=\sideset{}{^*}\sum_{x (q)}S(1,\overline{x};q)\overline{S}(1,\overline{(n_1\overline{n_2}x+\overline{n_2b}\tilde{m})};q).
\end{equation}It is clear that $ I(\tilde{m})$ is negligibly small unless  $\tilde{m}\ll q/M$. 

It remains to estimate the character sum $\mathcal{C}(\cdots)$. In the case of prime power moduli, an explicit evaluation of the character sum $\mathcal{C}$ as a function of $(\tilde{m}, n_1, n_2)$ can be obtained by following the proof of Lemma \ref{ppowerchar} or otherwise. If $\alpha_n=1$, as required for our application, this evaluation can be used to non-trivially bound one of the $n_1$, $n_2$, or $\tilde{m}$-sum in \eqref{altcs} using an exponent pair estimate. However, since we are not interested in this improvement for the purposes of this paper, we use the ready-made estimates available in \cite{df}.
\begin{lemma}\label{altcharest}
For any $q\geq 1$ and $\mathcal{C}(n_1,n_2,\tilde{m})$ as in \eqref{charalt}, we have
\begin{equation}\notag
\mathcal{C}(n_1,n_2,\tilde{m})\ll q^{3/2}\sum_{k|q}k^{1/2}\delta_{\left(\substack{n_1=n_2 (k)\\\tilde{m}=0 (k)}\right)}.
\end{equation}
\end{lemma}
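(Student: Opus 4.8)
The plan is to reduce $\mathcal{C}(n_1,n_2,\tilde{m})$ to a Kloosterman-sum correlation of the type handled in \cite{df}, exactly as was done for $\mathfrak{C}_{1,1}$ in Lemma \ref{primecharest} and for $\mathfrak{C}_2$ in Lemma \ref{glue}, and then multiplicatively glue the prime-power factors when $q$ is general. First I would observe that in \eqref{charalt} the inner argument of the second Kloosterman sum is an honest M\"obius (fractional-linear) transformation of $x$: writing $\gamma(x)=n_1\overline{n_2}\,x+\overline{n_2 b}\,\tilde{m}$ we have $\gamma=\begin{pmatrix} n_1\overline{n_2} & \overline{n_2b}\tilde{m}\\ 0 & 1\end{pmatrix}$ acting on $\overline{x}$ (after the harmless substitution $x\mapsto \overline{x}$), whose determinant $n_1\overline{n_2}$ is a unit mod $q$; so Proposition 3.3 and Proposition 3.4 of \cite{df} — in the normalisation already quoted as \eqref{dfest} and in Lemma \ref{rb} — apply prime-by-prime.

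Next I would use that $q=\prod_i p_i^{\gamma_i}$ and factor $\mathcal{C}$ by the Chinese remainder theorem into a product $\prod_i \mathcal{C}_i$ over the prime powers $p_i^{\gamma_i}$, just as in the proof of Lemma \ref{glue}. For each local factor, $\gamma(x)$ reduces to a fractional-linear map with unit determinant, and the relevant diagonal degeneracy condition "$a-d\equiv b\equiv c\equiv 0$" of \eqref{dfest} becomes $n_1\equiv n_2$ and $\tilde m\equiv 0$ modulo $p_i^{\gamma_i}$; more precisely, following Lemma \ref{rb}, at the prime power $p^{\gamma}$ one gets $\mathcal{C}_i\ll p^{3\gamma/2}\, p^{\min\{\gamma,\nu_p(n_1-n_2),\nu_p(\tilde m)\}/2}$, which is exactly $p^{3\gamma/2}\sum_{k\mid p^{\gamma}} k^{1/2}\delta_{(n_1\equiv n_2\,(k),\ \tilde m\equiv 0\,(k))}$ up to the usual divisor-bounded constant. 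Multiplying these local bounds and using that the $\delta$-conditions at distinct primes combine into a single condition modulo $k=\prod k_i$ then yields
\begin{equation}\notag
\mathcal{C}(n_1,n_2,\tilde m)\ll q^{3/2}\sum_{k\mid q} k^{1/2}\,\delta_{\left(\substack{n_1\equiv n_2\,(k)\\ \tilde m\equiv 0\,(k)}\right)},
\end{equation}
which is the claim. (Strictly, $\gcd(n_1n_2b,q)=1$ must be assumed; this holds in the application since $(n,q)=1$ may be imposed and $(b,q)=1$, and degenerate $n$ were already removed at the start of the section.)

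The only genuinely delicate point is the bookkeeping of the degenerate (diagonal) case at each prime: one must check that the conditions forcing the larger bound $p^{2\gamma}$ (rather than $p^{3\gamma/2}$) at $p^{\gamma}$ are precisely $n_1\equiv n_2$ and $\tilde m\equiv 0$ mod $p^{\gamma}$, and more generally that the intermediate savings are governed by $\min\{\gamma,\nu_p(n_1-n_2),\nu_p(\tilde m)\}$ — this is exactly the content of Lemma \ref{rb} applied with $a=n_1\overline{n_2}$ and $b=\overline{n_2b}\,\tilde m$ (note $\nu_p(a-1)=\nu_p(n_1-n_2)$ and $\nu_p(b)=\nu_p(\tilde m)$ since $n_2,b$ are units), so no new work beyond citing it is needed. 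Everything else is the routine CRT factorisation and divisor-function absorption into the $q^{\epsilon}$ already hidden in $\ll$.
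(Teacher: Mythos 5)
Your proposal follows the paper's proof essentially verbatim: factor $\mathcal{C}$ by CRT into prime-power blocks, apply the Dabrowski--Fisher estimate (Lemma \ref{rb}) at each block with $a$ congruent to $n_1\overline{n_2}$ (up to inversion) and $b$ a unit multiple of $\tilde m$, and glue the local congruence conditions; the minor imprecision in labelling $a$ and $b$ after the substitution $x\mapsto\overline{x}$ is harmless since only $\nu_p(a-1)=\nu_p(n_1-n_2)$ and $\nu_p(b)=\nu_p(\tilde m)$ are used. This is exactly what the paper does, so no further comment is needed.
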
	
\begin{proof}
Let us factorise $q$ into product of prime powers $q=\prod_{1\leq i\leq}q_i$, where $q_i=p_i^{\gamma_i}$ and $p_i$' s are prime. Then by repeated use of the well known multiplicative property of the Kloosterman sums (\cite{iwaniec}, eq. (1.59)) we get
\begin{equation}\notag
S(1,\overline{x};q)\overline{S}(1,\overline{(n_1\overline{n_2}x+\overline{n_2b}\tilde{m})};q)=\prod_{1\leq i\leq l}S(1,\overline{(q/q_i)}^2\overline{x}, q_i)\overline{S}(1,\overline{(q/q_i)}^2\overline{(n_1\overline{n_2}x+\overline{n_2}\tilde{m})}, q_i).
\end{equation}Splitting the residue classes $x(q)$ in \eqref{charalt} using the Chinese Remainder Theorem, it then follows
\begin{equation}\label{cisplit}
\mathcal{C}(n_1,n_2,\tilde{m})=\prod_{1\leq i\leq l}K_i,
\end{equation}where 
\begin{equation}\notag
\begin{aligned}
K_i&=\sideset{}{^*}\sum_{x (q_i)}S(1,\overline{(q/q_i)}^2\overline{x}, q_i)\overline{S}(1,\overline{(q/q_i)}^2\overline{(n_1\overline{n_2}x+\overline{n_2}\tilde{m})}, q_i)\\
&=\sideset{}{^*}\sum_{x (q_i)}S(1,x, q_i)\overline{S}(1,\overline{n_1}n_2x\overline{((q/q_i)^2\overline{n_1}\tilde{m}x+1)}, q_i).
\end{aligned}
\end{equation}We can now apply estimates for $K_i$ from Lemma \ref{rb} giving us
\begin{equation}\notag
K_i\ll q_i^{3/2}p_i^{(\min\{\gamma_i, \nu_{p_i}(n_1-n_2),\nu_{p_i}(b)\})/2}\ll q_i^{3/2}\sum_{k|q_i}k^{1/2}\delta_{\left(\substack{n_1=n_2 (k)\\\tilde{m}=0 (k)}\right)}.
\end{equation}The lemma follows after substituting these estimates for $K_i$ into \eqref{cisplit} and gluing the congruences.
\end{proof}
Plugging in the estimate from Lemma \ref{altcharest} into \eqref{altcs} we obtain
\begin{equation}\notag
\begin{aligned}
\sum_{m\in\mathbb{Z}}V(m/M)\left|\sum_{n\in\mathscr{N}}\alpha_n\sideset{}{^*}\sum_{x (q)}e\left(\frac{xmnb}{q}\right)S(1,\overline{x};q)\right|^2&\ll Mq^{3/2}\sum_{k|q}k^{1/2}\sum_{n_1,n_2\in \mathscr{N}}\,\,\sum_{\tilde{m}\ll q/M }\delta_{\left(\substack{n_1=n_2 (k)\\\tilde{m}=0 (k)}\right)}\\
&\ll Mq^{3/2}\sum_{k|q}k^{1/2}N(1+N/k)(1+q/Mk)\\
&\ll MNq^{3/2}\sum_{k|q}(q^{1/2}+N(1+q/M))\\
&\ll MNq^2+N^2q^{5/2}(1+M/q).
\end{aligned}
\end{equation}Final substitution into \eqref{csalt} yields
\begin{equation}\label{primealt}
S\ll MN^{1/2}+M^{1/2}Nq^{1/4}(1+M/q)^{1/2}.
\end{equation}This completes the proof Lemma \ref{alt}.

\section{The application : Proof of Theorem \ref{mainthm} }Let $q\geq 1 , a\in\mathbb{Z}$ such that $(a,q)=1$. We are interested in the asymptotic of
\begin{equation}\notag
S=\sum_{\substack{n\leq X\\n=a (q)}}d_3(n).
\end{equation}Detecting $n=a\,\, (q)$ using additive characters, we obtain
\begin{equation}\notag
S=\frac{1}{q}\sum_{\alpha (q)}\sum_{n\leq X}d_3(n)e\left(\frac{\alpha(n-a)}{q}\right).
\end{equation}Splitting into Ramanujan sums we get
\begin{equation}\label{ult}
S=\sum_{d|q}S(d),
\end{equation}where
\begin{equation}\notag
S(d)=\frac{1}{q}\sideset{}{^*}\sum_{\alpha (d)}\sum_{n\leq X}d_3(n)e\left(\frac{\alpha(n-a)}{d}\right).
\end{equation}Fix $A>0$. Choose a smooth function $w(x)$  such that $w(x)=1$ for $x\in [X^{1-\epsilon/2}, X+X^{1-\epsilon}]$ and $\text{supp}(w)\subseteq  [X^{1-\epsilon}, X+X^{1-\epsilon/2}]$ and satisfying
\begin{equation}\notag
x^{j}w^{(j)}(x)\ll_{\epsilon,j} X^{j\epsilon},
\end{equation}for $j\geq 0$. Smoothing the $n$-sum in  $S_k$ using the weight function $w$, we obtain
\begin{equation}\label{smooth}
S(d)=\frac{1}{q}\sideset{}{^*}\sum_{\alpha (d)}\sum_{n\geq 1}d_3(n)w(n)e\left(\frac{\alpha(n-a)}{d}\right)+O(X^{1-\epsilon}/q).
\end{equation}
The Voronoi summation formula \eqref{vord3} for $d_3$ transforms the $n$-sum above into
\begin{equation}\label{vortrans0}
\begin{aligned}
&\sum_{n\geq 1}d_3(n)w(n)e\left(\frac{\alpha n}{d}\right)\\
&=\frac{1}{d}\int_{0}^{\infty}P(\log y, d)w(y)dy\\
&+\frac{d}{2\pi^{3/2}}\sum_{\pm}\sum_{r|d}\sum_{m\geq 1}\frac{1}{rm}\sum_{r_1|r}\sum_{r_2|\frac{r}{r_1}}\sigma_{0,0}(r/(r_1r_2), m)S(\pm m, \overline{\alpha}; d/r)\Phi_{\pm}(mr^2/d^3).
\end{aligned}
\end{equation}Substituting into \eqref{smooth} we obtain,
\begin{equation}\label{Sksplit}
S(d)=M(d)+E(d)+O(X^{1-\epsilon}/q),
\end{equation}where
\begin{equation}\label{degterms}
M(d)=\frac{1}{qd}\left(\int_{0}^{\infty}P(\log y, d)w(y)dy\right)\sideset{}{^*}\sum_{\alpha (d)}e(-\alpha a/d)=\frac{\mu(d)}{qd}\int_{0}^{\infty}P(\log y, d)w(y)dy,
\end{equation}and
\begin{equation}\label{vortrans}
\begin{aligned}
E(d)=\frac{d^2}{2\pi^{3/2}q}\sum_{\pm}\sum_{r|d}\sum_{m\geq 1}\frac{1}{rm}\sum_{r_1|r}\sum_{r_2|\frac{r}{r_1}}\sigma_{0,0}(r/(r_1r_2), m)K_{r,d}(m)\Phi_{\pm}(mr^2/d^3),
\end{aligned}
\end{equation}where
\begin{equation}\notag
K_{r,d}(m)=\frac{1}{d}\sideset{}{^*}\sum_{\alpha (d)}e(-a\alpha/d)S(\pm m, \overline{\alpha}; d/r).
\end{equation}Write $d=d_0d_1$, where $d_0$ is the square-free and $d_1$ is the square-full part. Then note that $K_{r,d}(m)$ vanishes unless $r|d_0$ in which case we have
\begin{equation}\notag
K_{r,d}(m)=\frac{\mu(r)}{d}\sideset{}{^*}\sum_{\alpha (d/r)}e(-\overline{r}a\alpha /(d/r))S(\pm m, \overline{\alpha}; d/r).
\end{equation}Recall from \eqref{ternary} that
\begin{equation}\label{sigma}
\sigma_{0,0}(r/(r_1r_2), m)=\sum_{t|(r/(r_1r_2),\,m)}\mu(t)d_3(m/t).
\end{equation}We fix the divisor $t|(r/(r_1r_2))$ in \eqref{sigma} and push the $m$-sum in \eqref{vortrans} inside to see that
\begin{equation}\label{ed}
E(d)\ll \frac{d^2}{q}\sum_{r|d_0}\frac{1}{r^2}\sum_{t|r}\frac{1}{t}|C(d,r,t)|,
\end{equation}where
\begin{equation}\notag
C(d,r,t)=\sum_{m\geq 1}\frac{d_3(m)}{m}\tilde{\text{Kl}}_3(mb, d/r)\Phi_{\pm}(mtr^2/d^3),
\end{equation} with $b=\pm \overline{r}ta$. Now from Lemma \ref{Phiprop} it follows that the $m$-sum above is negligibly small unless $m\ll d^3/(tr^2X)$. Also, if we define
\begin{equation}\notag
\psi(m)=(\min\{mtr^2X/d^3,1\})^{-1}\Phi_{\pm}(mtr^2/d^3),
\end{equation}then from the same lemma we have
\begin{equation}\notag
y^j\psi^{(j)}(y)\ll_{j}1.
\end{equation}Hence we can write
\begin{equation}\notag
C(d,r,t)=\sum_{m\ll d^3/(tr^2X)}\frac{\min\{mtr^2X/d^3,1\}}{m}\cdot d_3(m)\tilde{\text{Kl}}_3(mb, d/r)\psi(m).
\end{equation}Dividing the $m$-sum above into dyadic blocks $m\sim Y, Y\ll d^3/(tr^2X)$, we see that
\begin{equation}\notag
C(d,r,t)\ll \frac{\min\{Ytr^2X/d^3,1\}}{Y}\sup_{Y\ll d^3/(tr^2X) }|C(d,r,t,Y)|\ll \frac{tr^2X}{d^3}\sup_{Y\ll d^3/(tr^2X) }|C(d,r,t,Y)|
\end{equation}where
\begin{equation}\notag
C(d,r,t,Y)=\sum_{m\sim Y}d_3(m)\tilde{\text{Kl}}_3(mb, d/r).
\end{equation}Substituting the last inequality into \eqref{ed} we conclude
\begin{equation}\label{edf}
E(d)\ll \frac{X}{qd}\sup_{\substack{r|d_0, t|r\\Y\ll d^3/(tr^2X)}}|C(d,r,t,Y)|.
\end{equation}

We proceed for the estimation of $C(d,r,t,Y)$. We do this by converting it into a bilinear sum as in Theorem \ref{prime} with $N\ll Y^{1/3}$ using the symmetry in the factorisation of $d_3(m)$.
Expanding $d_3(m)$ into product of three variables and introduction dyadic partition in each of the variables, we get
\begin{equation}\label{dyadicd_3}
C(d,r,t,Y)\ll \sup_{\substack{N_1,N_2, N_3>0\\N_1N_2N_3\sim Y}}\left|\sum_{n_1,n_2,n_3}\text{Kl}_3(n_1n_2n_3b, d/r)V(n_1/N_1)V(n_2/N_2)V(n_3/N_3)\right|.
\end{equation}By symmetry we can assume $N_1\leq N_2\leq N_3$. Note that this forces $N_1\ll Y^{1/3}$. Gluing $n_2n_3=m$ we obtain
\begin{equation}\label{glueing}
\sum_{n_1,n_2,n_3}\text{Kl}_3(n_1n_2n_3b, d/r)V(n_1/N_1)V(n_2/N_2)V(n_3/N_3)=\sum_{n_1\sim N_1}\sum_{m\sim Y/N_1}a(m)\text{Kl}_3(mn_1b, d/r),
\end{equation}where
\begin{equation}\notag
a(m)=\sum_{b|m}V(b/N_2)V(m/bN_3).
\end{equation}Using the Mellin inversion 
\begin{equation}\label{Mellin}
V(x)=\int_{(\sigma)}\Tilde{V}(s)x^{-s }\,ds
\end{equation}we can further write
\begin{equation}\notag
a(m)=\mathop{\int\int}\Tilde{V}(s_1)\Tilde{V}(s_2)N_2^{s_1}N_3^{s_2}m^{-s_2}\sigma_{s_2-s_1}(m)\,ds_1\,ds_2.
\end{equation}Note that since $V$ is a nice weight function, we can restrict the contour in \eqref{Mellin} to $|s|\ll X^{\epsilon}$ upto a negligible error. Feeding all these information into the right hand of \eqref{glueing} we obtain
\begin{equation}\notag
\sum_{n_1\sim N_1}\sum_{m\sim Y/N_1}a(m)\text{Kl}_3(mn_1b, d/r)\ll \sup_{|s_i|\ll X^{\epsilon}}\left|\sum_{n_1\sim N_1}\sum_{m\sim Y/N_1}\sigma_{s_1}(m)m^{s_2}\text{Kl}_3(mn_1b, d/r)\right|.
\end{equation}Substituting the last relation into \eqref{dyadicd_3} we finally obtain
\begin{equation}\label{Ckf}
C(d,r,t,Y)\ll \sup_{\substack{N\ll Y^{1/3}\\|s_i|\ll X^{\epsilon}}}\left|S(Y/N,N)\right|,			
\end{equation}where
\begin{equation}\notag
S(M,N)=\sum_{n\sim N}\sum_{m\sim M}\sigma_{s_1}(m)m^{s_2}\text{Kl}_3(mnb, d/r).
\end{equation}
We are now position to apply our estimates for bilinear sums obtained in previous sections.
\subsection{Square-free moduli}Here $d/r$ is square-free. We want to apply the estimates from Theorem \ref{prime} and Lemma \ref{alt} to $S(M,N)$ with the parameters $q=d/r, M=Y/N$. For this we need to first verify the hypothesis $N\leq q^{1/2}(1+M/q)^{-2}$ of Theorem \ref{prime}. Note that $N\gg q^{1/2}$ translates to $N\gg (d/r)^{1/2}$ which implies
\begin{equation}\notag
Y^{1/3}\gg (d/r)^{1/2}\Rightarrow (d^3/(tr^2X))^{1/3}\gg (d/r)^{1/2}\Rightarrow d\gg X^{2/3},
\end{equation}which is not the case since $d\leq q \leq X^{2/3}$ in our final choice of $q (\leq X^{1/2+1/30-\epsilon})$. Similarly $NM^2\gg q^{5/2}$ will imply $d\gg X^{7/4}$ which is also not the case. Hence the condition $N\leq q^{1/2}(1+M/q)^{-2}$ is satisfied so that from Theorem \ref{prime} and Lemma \ref{alt}  we obtain
\begin{equation}\label{primebounds}
\begin{aligned}
&S(Y/N,N)\ll\\
&\begin{cases}
Y^{1/2}N^{1/4}\mathfrak{q}^{3/8}+Y/(N^{1/4}\mathfrak{q}^{1/8})+YN^{1/4}/\mathfrak{q}^{1/4}+Y^2/(N^{1/2}\mathfrak{q}^{5/4})+Nq^{3/4}+Y^{1/2}\mathfrak{q}^{1/4}/N^{1/2},\\
Y/N^{1/2}+Y^{1/2}N^{1/2}\mathfrak{q}^{1/4}+Y\mathfrak{q}^{-1/4}.
\end{cases}
\end{aligned}
\end{equation}where $\mathfrak{q}=d/r$. Our job now is to optimally choose bounds between the two lines in  \eqref{primebounds} depending on the size of $N$. Note that since $N\ll Y^{1/3}$, the second term in the second bound of \eqref{primebounds} is $\ll Y^{2/3}\mathfrak{q}^{1/4}$. Similarly the last term in the first bound is clearly $\ll Y^{2/3}\mathfrak{q}^{1/4}$.  Also note that $Y/N^{1/2}\gg Y/\mathfrak{q}^{-1/4}$ since $N\ll \mathfrak{q}^{1/2}$ as pointed out earlier. So we can write 
\begin{equation}\notag
S(Y/N,N)\ll \sum_{i=1}^{5}A_i+Y^{2/3}\mathfrak{q}^{1/4},
\end{equation}where
\begin{equation}\notag
A_1=\min\{Y^{1/2}N^{1/4}\mathfrak{q}^{3/8}, Y/N^{1/2}\}, A_2=\min\{Y/(N^{1/4}\mathfrak{q}^{1/8}), Y/N^{1/2} \},
\end{equation}
\begin{equation}\notag
A_3=\min\{YN^{1/4}/\mathfrak{q}^{1/4}, Y/N^{1/2}\}, A_4=\min\{Y^2/(N^{1/2}\mathfrak{q}^{5/4}), Y/N^{1/2}\},
\end{equation}and
\begin{equation}\notag
A_5=\min\{N\mathfrak{q}^{3/4},Y/N^{1/2}\}.
\end{equation}
$A_1$ attains its largest value when the two terms inside the parenthesis are equal, that is when $N=Y^{2/3}/\mathfrak{q}^{1/2}$, which gives
\begin{equation}\label{argeq}
A_1\leq Y^{2/3}\mathfrak{q}^{1/4}.
\end{equation}Similarly arguing, we obtain
\begin{equation}\notag
A_2\leq Y/\mathfrak{q}^{1/4},\,A_3\leq Y/\mathfrak{q}^{1/6}, A_4\leq Y^2/\mathfrak{q}^{5/4}, A_5\leq Y^{2/3}\mathfrak{q}^{1/4}.
\end{equation}Hence
\begin{equation}\label{Sestf}
S(Y/N,N)\ll Y^{2/3}\mathfrak{q}^{1/4}+Y/\mathfrak{q}^{1/6}+Y^2/\mathfrak{q}^{5/4}.
\end{equation}Substituting this in \eqref{Ckf} and then in \eqref{edf} we obtain
\begin{equation}\label{C0est}
\begin{aligned}
E(d)&\ll \frac{X}{qd}\sup_{\substack{r|d_0, t|r\\Y\ll d^3/(tr^2X)}}(Y^{2/3}(d/r)^{1/4}+Y(d/r)^{-1/6}+Y^2(d/r)^{-5/4})\\
&\ll  \frac{X}{qd}(X^{-2/3}d^{9/4}+X^{-1}d^{17/6}+X^{-2}d^{19/4})\\
&\ll X^{1/3}q^{1/4}+q^{5/6}+X^{-1}q^{11/4},
\end{aligned}
\end{equation}where he used the upper bound $d\ll q$ in the last line. The last line of \eqref{C0est} is $O(X^{1-\epsilon}/q)$ for $q\leq X^{1/2+1/30-\epsilon}$ and therefore $E(d)\ll X^{1-\epsilon}/q$ for $q\leq X^{1/2+1/30-\epsilon}$. Hence from \eqref{Sksplit} and \eqref{ult} it follows,
\begin{equation}\label{primef}
S=\sum_{d|q}M(d)+O(X^{1-\epsilon}/q),
\end{equation}for square-free $q\leq X^{1/2+1/30-\epsilon}$.
\subsection{Prime power moduli}Here $q=p^{\gamma}, \gamma\geq 2$ and so $d/r=p^k$. Without loss of generality, we can assume $k\geq 2$ since for $k=1$, we can use the estimate \eqref{Sestf} for $S(Y/N,N)$ to arrive at the same bound \eqref{C0est}. Furthermore, when $k\geq 2$ note that $r=1$ since $r$ has to divide the square-free part of $d$ which is $1$ in this case. For $d=p^{k}, k\geq 2$, using the estimate from \eqref{festprimep} and Lemma \ref{alt}, we obtain
\begin{equation}\label{primepbounds2}
S(Y/N,N)\ll
\begin{cases}
p^{7/12}Y^{1/2}N^{1/3}d^{1/3}+p^{7/12}Y^{7/6}/(N^{1/3}d^{1/3})\\
\hspace{2cm}+p^{1/4}Y^{1/2}N^{1/2}d^{3/10}+p^{1/4}Y^{3/2}/(N^{1/2}d^{7/10})+N\mathfrak{q}^{13/20},\\
Y/N^{1/2}+Y^{1/2}N^{1/2}d^{1/4}+Y\mathfrak{q}^{-1/4},
\end{cases}
\end{equation}where $d=p^k, k\geq 2$. As earlier, we use $N\ll Y^{1/3}$ to bound the second term of second bound in \eqref{primepbounds2} by $Y^{2/3}d^{1/4}$ and ignore the third term due to the inequality $N\ll d^{1/2}$. Hence this time we get
\begin{equation}\label{SkAi}
S(Y/N,N)\ll Y^{2/3}d^{1/4}+A_1+A_2+A_3+A_4+A_5,
\end{equation}where
\begin{equation}\notag
A_1=\min\{p^{7/12}Y^{1/2}N^{1/3}d^{1/3}, Y/N^{1/2}\},\,A_2=\min\{p^{7/12}Y^{7/6}/(N^{1/3}d^{1/3}), Y/N^{1/2}\},
\end{equation}
\begin{equation}\notag
A_3=\min\{p^{1/4}Y^{1/2}N^{1/2}d^{3/10}, Y/N^{1/2} \},\,A_4=\min\{p^{1/4}Y^{3/2}/(N^{1/2}d^{7/10}), Y/N^{1/2}\},
\end{equation}and
\begin{equation}\notag
A_5=\min\{N\mathfrak{q}^{13/20}, Y/N^{1/2}\}.
\end{equation}Arguing as in \eqref{argeq} we obtain the following estimates for $A_i$ :
\begin{equation}\notag
A_1\leq Y^{7/10}d^{1/5}p^{7/20},\,A_2\leq Y^{3/2}d^{-1}p^{7/4},\,A_3\leq Y^{3/4}d^{3/20}p^{1/8},\,A_4\leq Y^{3/2}d^{-7/10}p^{1/4}
\end{equation}and
\begin{equation}\notag
A_5\leq Y^{2/3}d^{13/60}.
\end{equation}Using these estimates for $A_i$ in \eqref{SkAi} we obtain
\begin{equation}\notag
S(Y/N,N)\ll Y^{2/3}d^{1/4}+Y^{7/10}d^{1/5}p^{7/20}+Y^{3/2}d^{-1}p^{7/4}+ Y^{3/4}d^{3/20}p^{1/8}+Y^{3/2}d^{-7/10}p^{1/4}+Y^{2/3}d^{13/60}
\end{equation}Substituting the last estimate into \eqref{Ckf} and then in \eqref{edf} we obtain
\begin{equation}\label{C1est}
\begin{aligned}
\begin{aligned}
E(d)&\ll \frac{X}{qd}\sup_{Y\ll d^3/X}\left(Y^{2/3}d^{1/4}+Y^{7/10}d^{1/5}p^{7/20}+Y^{3/2}d^{-1}p^{7/4}+ Y^{3/4}d^{3/20}p^{1/8}\right.\\
&\hspace{7cm}\left. + Y^{3/2}d^{-7/10}p^{1/4}+Y^{2/3}d^{13/60}\right)\\
&\ll\frac{X}{qd}\left(X^{-2/3}d^{9/4}+X^{-7/10}d^{23/10}p^{7/20}+X^{-3/2}d^{7/2}p^{7/4}+X^{-3/4}d^{12/5}p^{1/8}\right.\\
&\hspace{7cm}\left.+X^{-3/2}d^{19/5}p^{1/4}+X^{-2/3}d^{133/60}\right)\\
&=X^{1/3}q^{1/4}+X^{3/10}q^{3/10}p^{3/20}+X^{-1/2}q^{3/2}p^{7/4}+X^{1/4}q^{2/5}p^{1/8}\\
&\hspace{7cm}+X^{-1/2}q^{9/5}p^{1/4}+X^{1/3}q^{13/60}.
\end{aligned}
\end{aligned}
\end{equation}The last line in \eqref{C1est} is $O(X^{1-\epsilon}/q)$ when $q\ll X^{1/2+1/30-\epsilon}$ and $\gamma\geq 28$. The exponent $1/2+1/30$ and the power $\gamma\geq 28$ is determined by the `$X^{1/3}q^{1/4}$' and the `$X^{3/10}q^{3/10}p^{3/20}$' terms respectively.
\begin{remark}\label{improv1}
The main contributing term `$X^{1/3}q^{1/4}$'  originates from the `$Y^{1/2}N^{1/2}d^{1/4}$' term in the second line of \eqref{primepbounds2}. Thus, it is evident that any improvement in this term, which corresponds to the off-diagonal contribution in \eqref{altcs}, would result in an improvement in the exponent of distribution.
\end{remark}
Hence from \eqref{Sksplit} and \eqref{ult} it follows,
\begin{equation}\label{powerf}
S=\sum_{d|q}M(d)+O(X^{1-\epsilon}/q),
\end{equation}for $q=p^{\gamma}\leq X^{1/2+1/30-\epsilon}$ and $\gamma\geq 28$.

Finally, from \eqref{primef} and \eqref{powerf} it follows
\begin{equation}\label{pf}
S=\sum_{\substack{n\leq X\\n=a (q)}}d_3(n)=\sum_{d|q}M(d)+O(X^{1-\epsilon}/q),
\end{equation}for $q\leq X^{1/2+1/30-\epsilon}$, where $q$ is either square-free or $q=p^{\gamma}, \gamma\geq 28$. Note that the $M(d)$'s, which are given by \eqref{degterms}, are independent of the residue class $a\,\, (q)$. Hence summing the expression \eqref{pf} over all the co-prime residue classes $a\,\,(q)$, we obtain
\begin{equation}\notag
\sum_{\substack{n\leq X\\(n,q)=1}}d_3(n)=\phi(q)\left(\sum_{d|q}M(d)\right)+O(\phi(q)X^{1-\epsilon}/q),
\end{equation}from which it follows
\begin{equation}\notag
\sum_{d|q}M(d)= \frac{1}{\phi(q)}\sum_{\substack{n\leq X\\(n,q)=1}}d_3(n)+O(X^{1-\epsilon}/q).
\end{equation}Theorem \ref{mainthm} follows after substituting the last expression for $\sum_{d|q}M(d)$ into \eqref{pf}.

\section*{Acknowledgements}
The author would like to thank the Alfr\'{e}d R\'{e}nyi Institute of Mathematics for providing an excellent research environment.

\bibliographystyle{abbrv}
\bibliography{references}

\end{document}